\newtheorem{theorem}{Theorem}[section]
\newtheorem{lemma}[theorem]{Lemma}
\newtheorem{proposition}[theorem]{Proposition}
\newtheorem{conjecture}[theorem]{Conjecture}
\newtheorem*{conjecture*}{Conjecture}
\newtheorem{remark}[theorem]{Remark}
\newtheorem{fact}[theorem]{Fact}
\theoremstyle{definition}
\newtheorem{definition}[theorem]{Definition}
\def\sub{\subseteq}%
\newcommand{\iprod}[2]{\langle #1,#2 \rangle}
\DeclareMathOperator{\ill}{ill}
\DeclareMathOperator{\Ill}{ill}
\DeclareMathOperator{\Int}{int}
\DeclareMathOperator{\relint}{relint}
\DeclareMathOperator{\Sp}{span}
\DeclareMathOperator{\conv}{conv}
\DeclareMathOperator{\supp}{supp}
\DeclareMathOperator{\extreme}{ext}
\newcommand{\CC}{\mathbb{C}}
\newcommand{\RR}{\mathbb{R}}
\newcommand{\ZZ}{\mathbb{Z}}
\newcommand{\NN}{\mathbb{N}}
\newcommand{\TT}{\mathbb{T}}
\newcommand{\Sph}{\mathbb{S}}
\newcommand{\one}{\mathbbm{1}}
\def\S{\mathbb{S}}
\title{The complex Illumination problem}
\author{Liran Rotem, Alon Schejter and Boaz A. Slomka}
\date{}
\begin{document}
\maketitle
\begin{abstract}
We formulate a complex analog of the celebrated Levi-Hadwiger-Boltyanski  illumination (or covering) conjecture for complex convex bodies in $\CC^n$, as well as its (non-comparable) fractional version. 
	A key element in posing these problems is computing the classical and fractional illumination numbers of the complex analog of the hypercube, i.e., the  polydisc. 
	We prove that the illumination number of the polydisc in $\CC^n$ is equal to $2^{n+1}-1$ and that the fractional illumination number of the polydisc in $\CC^n$ is  equal to $2^n$.  
	In addition, we verify both conjectures for the classes of complex zonotopes and zonoids.
\end{abstract}

\section{Introduction}

\subsection{The classical illumination problem}
Let $K\sub\RR^n$ be a convex body, i.e., a  compact convex set with non-empty interior $\Int(K)$. Denote by $N(K)$ the minimal natural number $m$ such that $K$ can be covered by $m$ translates of its interior, that is 
$$
N(K)=\min\{m\in\NN\,:\, \exists x_1,\dots,x_m\in\RR^n,\,K\sub\bigcup_{i=1}^m(x_i+\Int(K))\}.
$$
A central and  long-standing problem in discrete geometry asks whether $N(K)\le 2^n$ for every convex body $K\sub\RR^n$ and whether $N(K)=2^n$ if and only if $K$ is an affine image of the hypercube $C_n$ in $\RR^n$. (It is easy to see that $N(C_n)=2^n$ as each two vertices of $C_n$ must be covered by different translates of its interior.) 

This problem, widely known as {\em the Levi-Hadwiger covering problem}, was posed in 1957 by Hadwiger \cite{hadwiger57}, a couple of years after its planar case had been   stated and settled by Levi \cite{Levi55}. In 1960, it was  independently posed by Gohberg and Markus \cite{GohMark60} in terms of covering a convex body by  smaller homothetic copies of it.

Yet another equivalent formulation of the Levi-Hadwiger covering problem is known as the {\em Boltyanski-Hadwiger illumination problem}, or simply the {\em illumination problem}.  Let $\partial K$ denote the boundary of a convex body $K\sub\RR^n$.  
A direction $v\in\RR^n$ is said to {\em illuminate} a point $x\in \partial K$  if the ray emanating from $x$ in direction $v$ passes through the interior of $K$, that is,  $x+tv\in\Int(K)$ for some $t>0$.  
We say that $K$ is illuminated by $V\sub\RR^{n}$ if each point in $K$ is illuminated by some $v\in V$. We  call the minimal number of directions needed to illuminate $K$ {\em the illumination number of $K$} and denote it  by $\ill(K)$. 
Clearly $v$ illuminates $x \in \partial K$ if and only if
$\frac{v}{|v|}$
illuminates $x$, so it is customary in the 
literature to assume that $v \in \S^{n-1}$, the unit sphere in $\RR^n$. For us however it will be convenient to allow illumination by general directions
in $\RR^n$. 

In \cite{boltyanski1960}, Boltyanski proved that $\ill(K)=N(K)$ for all convex bodies $K\sub\RR^n$ and restated the Levi-Hadwiger covering  problem as the following  illumination conjecture. 
\begin{conjecture*}
	Let $n\ge 2$. Given a convex body  $K\sub\RR^n$, we have $\ill(K)\le 2^n$. Moreover,  equality holds if and only if $K$ is an affine image of the hypercube. 
\end{conjecture*}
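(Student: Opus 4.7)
The final statement is the Levi--Hadwiger--Boltyanski illumination conjecture itself, which has remained open for over six decades and is widely regarded as one of the central open problems in discrete convex geometry; any honest plan must begin by conceding that a complete proof is unlikely to come out of an ad hoc attempt. That said, the approach I would pursue is a two-stage one. The first stage is a reduction to the centrally symmetric case, for which the boundary cone structure is symmetric and the support function is a norm, so illuminating directions can naturally be paired as $\pm v$ and the combinatorics becomes cleaner. Heuristically the hypercube is the hardest symmetric case, and any loss incurred in the reduction should be absorbable into a stability argument for the equality clause.

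For the upper bound $\ill(K)\le 2^n$ in the symmetric case, I would try to exploit John's position: place $K$ so that its maximum-volume inscribed ellipsoid is the Euclidean unit ball, and let $\pm u_1,\dots,\pm u_N \in \partial K \cap \Sph^{n-1}$ be the contact points. For the cube the contact points are precisely $\pm e_i$ and the $2^n$ illuminating directions are naturally indexed by $\{-1,+1\}^n$ as signed sums of the $e_i$. The plan would be to construct $2^n$ candidate directions of the form $\sum_i \epsilon_i u_i$ (or appropriate averages thereof) and to verify, using John's decomposition of the identity to partition $\partial K$ according to which contact points are ``nearest'', that every boundary point is illuminated by at least one such candidate. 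The equality case would then be addressed by a rigidity argument: if $\ill(K)=2^n$, every candidate direction must be essentially used on a non-degenerate portion of $\partial K$, forcing the contact decomposition to split into exactly $2n$ antipodal pairs in an orthogonal configuration, which in turn identifies $K$ as an affine parallelepiped.

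The main obstacle, and the reason the conjecture remains open, is that no known covering or probabilistic argument achieves the precise constant $2^n$: the best general bounds, from Schramm's covering-based estimate and the more recent improvements of Huang--Slomka--Tkocz--Vritsiou, remain exponentially far from the conjectured value, and full equality analyses have been carried out only under strong auxiliary hypotheses (zonotopes, bodies with sufficiently large symmetry group, bodies sufficiently close to the ball, and so on). In the context of the present paper I would therefore treat this statement as motivational context for the complex conjecture the authors are about to formulate, rather than a result to be settled here — and indeed the abstract only claims the complex version for polydiscs, zonotopes, and zonoids, paralleling exactly the classical state of the art.
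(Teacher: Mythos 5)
This statement is the classical Levi--Hadwiger--Boltyanski illumination conjecture, which the paper states as an open problem and does not prove, so there is no proof to compare against; your decision to treat it as motivational context for the complex analogue rather than as a result to be established is exactly right. Your speculative John-position sketch would not succeed as written (the contact points of a symmetric body in John's position can number far more than $2n$, and signed combinations of them are not known to illuminate all boundary points), but since you explicitly concede that the conjecture remains open and offer the sketch only as a plan, there is nothing to correct.
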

An equivalent formulation of the illumination problem was independently posed by  Hadwiger \cite{hadwiger1960},
in terms of the minimal number of ``light sources" in the exterior of a convex body $K\sub\RR^n$ that illuminate  $K$ entirely. In his definition, a light source, namely a point $y\in\RR^n$ in the exterior of $K$, is said to illuminate $x\in\partial K$ if  the ray emanating from $x$ in direction $x-y$ passes through $\Int(K)$ (so that illuminating directions in Boltyanski's definition can be thought of as light sources at infinity).

The  classical illumination (and covering) conjecture  has been studied extensively over the years and approached using tools and techniques from different mathematical areas. 

In particular, the conjecture has been verified  for various subfamilies of convex bodies, including smooth  bodies \cite{Levi55}, centrally-symmetric bodies in $\RR^3$ \cite{Lassak84},  bodies of constant width in high dimensions \cite{schramm_1988} and low dimensions \cite{bezdek_kiss_2009, BezLanNas07, bondarenko_prymak_radchenko_2022,lassak97,Weissbach96}, fat spindle bodies in high dimensions \cite{Bez12},
zonotopes and belt polytopes \cite{Martini87}, zonoids \cite{boltjanski92}, belt bodies \cite{Boltyanski95,BoltMart01,Bolt95}, $1$-symmetric bodies in $\RR^n$ for $n$  sufficiently large  \cite{tikhomirov_2017} and for all $n$ \cite{SunVrits24_1sym}, 
low-dimensional unconditional bodies \cite{SunVrit24_uncon}, centrally-symmetric cap bodies in $\RR^3$  \cite{IvaStra21} and higher dimensions 
\cite{BezIvaStra23},
convex hulls of  Minkowski sums of finite subsets of the lattice $\ZZ^n$ and $[-1/2,1/2]^n$ \cite{GaoMartWeZhang24}, and convex bodies close to the hypercube (with respect to the Hausdorff or the geometric distance)  \cite{LivTikh20}.

Other major advances made towards the resolution of the conjecture involve different bounds. 
In  \cite{Prymak2023}, it was shown that the illumination number of every convex body in $\RR^3$ is bounded from above by $14$ and that a bound of $96$ holds in $\RR^4$. Estimates for general bodies in $\RR^5$ and $\RR^6$ were given in \cite{diao}, and further low-dimensional results and improvements  are established in   \cite{ArmBond24}.

As for general bounds, the following  long-established estimates from \cite{ErdRog64} are essentially due to a result of Rogers \cite{Rogers57} and  the Rogers-Shephard inequality for the difference body \cite{RogShep57}:
$$
N(K)\le2^n(n\ln(n)+n\ln\ln(n)+5n)
$$
which holds for every centrally-symmetric  convex body $K\sub\RR^n$ and remains the best bound known for this class, and
$$
N(K)\le{\binom{2n}{n}}(n\ln(n)+n\ln\ln(n)+5n)=O(4^n\sqrt{n}\ln(n))
$$
which holds for every convex body $K\sub\RR^n$. Only recently, a sub-exponential improvement of this  general bound  have been established in \cite{Huang2021} using tools from asymptotic convex geometry, followed by almost exponential improvements given in \cite{campos2022hadwigers} and \cite{GalSing23}.

For additional historical remarks, related notions, problems and results, we refer the reader to the  surveys and monographs  \cite{Bez06, Bez10, BezKhan18, BolGoh85, BolGoh95, Boltyanski1997, BraMosPach06, MartSolt99}  and references therein.

\subsection{The fractional illumination problem}

The classical illumination problem admits a (weaker) fractional version, which was introduced and studied by Nasz\'{o}di \cite{Naszodi09}.  In this version, a non-negative Borel measure $\mu $ on $\RR^n$ is said to illuminate a convex body $K\sub\RR^n$ if for each $x\in K$, the set  $A_K(x)$ of all directions illuminating $x$ is of measure at least $1$, that is  $\mu(A_K(x))\ge 1$. The fractional illumination number of $K$ is then defined as 
$$
\ill^*(K)=\inf \{\mu(\RR^n)\,:\, \mu \text{ illuminates } K\}.
$$
Again, it is customary to assume that $\mu$ is supported on $\S^{n-1}$, but for ease of notation we prefer to allow
illumination by non-unit vectors. 

The fractional illumination conjecture  states that $\ill^*(K)<\ill^*(C_n)$ for any convex body $K\sub\RR^n$ whose every affine image is different than the hypercube $C_n$. 
Clearly, $\ill^*(K)\le\ill(K)$. Moreover, it is not hard to verify that $\ill^*(C_n)=\ill(C_n)=2^n$, and so the fractional illumination conjecture is  indeed  weaker than its classical counterpart.

Nasz\'{o}di  \cite{Naszodi09} proved  that $\ill^*(K)\le\binom{2n}{n}$ for any convex body $K\sub\RR^n$ and that $\ill^*(K)\le 2^n$ whenever $K$ is centrally-symmetric. As in the classical case, sub-exponential improvements of the general bound $\binom{2n}{n}$ follow from \cite{campos2022hadwigers,GalSing23,Huang2021}.

Covering numbers of convex bodies  also admit fractional extensions. Such  notions were introduced and studied in \cite{artsteinavidanraz2011} and \cite{artsteinavidan2013weighted}, where a weighted version  of the Levi-Hadwiger covering conjecture (in which only discrete covering measures are allowed)  was studied in the latter. 

Among other results, it was shown that the weighted  Levi-Hadwiger conjecture holds true for the class of centrally-symmetric convex bodies, which also settles the equality case in the fractional illumination conjecture for centrally-symmetric bodies. 
However, in general, the fractional  illumination problem remains open.

\subsection{The Complex illumination problems}

Motivated by the deep-rooted and rich theory of real convex bodies, various  results concerning complex convex bodies have been established in recent years, 
see e.g.,  \cite{Abardia2012, Abardia15, AbarBern11, AbarSaor15, AbarWann15, AbarBorDomoKert19,  Alesker01, Alesker03, BCK20, Bern09, BernFu11,  BernFuSol14, Cordero02, EllmHofst23, EllHofs24, glover2023stability, Harbel19, HH15,  K11, KKZ08, KoldPaouris13, KZ03, LiuWangHuang15, NT13, Oleszkiewicz2000,  Rotem14, Rubin10, Tckocz11, WangHe13, Zym08, Zym09}. 
In line with this program, the purpose of this article is to introduce and study complex analogs of the classical and fractional illumination problems.

The main objects considered to this end are complex convex bodies in $\CC^n$, namely unit balls of norms on $\CC^n$.  
These can  also be viewed as  centrally-symmetric convex bodies in $\RR^{2n}$ with additional symmetries due to the fact that a complex norm is invariant under complex rotations, i.e., multiplication by  $e^{\theta i}$ for  $\theta\in\RR$.  By identifying $\CC^n$ with $\RR^{2n}$ using the standard mapping $z=(x_1+iy_1,\dots, x_n+iy_n)\mapsto c(z)=(x_1,y_1,\dots,x_n,y_n)$, a  convex body $K\sub\RR^{2n}$ is said to be complex if $x\in K\implies e^{\theta i}x:=c(e^{\theta i}c^{-1}(x))\in K$ for every $\theta\in\RR$.

The definitions of  classical  and fractional illumination are carried over verbatim from the real case to the complex case. In fact, since (by our convention)  we identify between the unit sphere in $\CC^n$ and the unit sphere in $\RR^{2n}$, these definitions simply coincide.
However, as the hypercube is not a complex convex body,  we need to find a different candidate for the role of the body which is (conjectured to be the) hardest  to illuminate. Naturally, we consider the polydisc $D^n=D\times\dots\times D$ (where $D$ is the closed unit disk in $\CC$) for this role.  

Unlike the hypercube in $\RR^n$, whose illumination and fractional illumination numbers are both equal to $2^n$ and 
easy to compute, the situation with the polydisc is more complicated. Nevertheless, as we explain below in Section 
\ref{subsec:polydisc-intro}, we have $\ill^*(D^n)=2^n$ and $\ill(D^n)=2^{n+1}-1$. We therefore conjecture the following:

\begin{conjecture}\label{conj:complex_ill_conj}
Given  a complex convex body $K\sub\CC^n$, we have  $\ill(K)\le 2^{n+1}-1$. Moreover, equality holds if and only if  $K$ is a linear image of $D^n$.
\end{conjecture}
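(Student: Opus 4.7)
My plan is to proceed by induction on the complex dimension $n$, mirroring the recursion $\ill(D^n) = 2\ill(D^{n-1}) + 1$ (with base $\ill(D) = 3$) that underlies the computation $\ill(D^n)=2^{n+1}-1$. The base case $n=1$ is classical: the diagonal $\S^1$-invariance together with convexity and central symmetry forces any complex convex body in $\CC$ to be a Euclidean disk $rD$, whose illumination number is $3 = 2^{1+1}-1$, with equality realized uniquely (up to linear maps) by $D$ itself.

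For the inductive step, given a complex convex body $K \subseteq \CC^n$, I would fix a complex hyperplane $H \subseteq \CC^n$ (a complex subspace of complex codimension one) and consider the $(n-1)$-complex-dimensional body $K \cap H$. The induction hypothesis yields $\ill(K \cap H) \le 2^n - 1$. I would then attempt to (a) lift each of these $2^n - 1$ illuminating directions from $H$ to a direction in $\CC^n$ so that the lifted family illuminates the \emph{cylindrical} part of $\partial K$, namely those boundary points whose outward normals lie close to $H$; and (b) add a further family of at most $2^n$ directions to handle the \emph{cap} part of $\partial K$, where the complex-orthogonal direction to $H$ dominates. The total count $(2^n - 1) + 2^n = 2^{n+1} - 1$ matches the conjecture. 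Crucially, the diagonal $\S^1$-symmetry reduces the orbit structure of $\partial K$ by one real dimension, and this is what enables the factor-$2$ recursion that would be hopeless for a general centrally-symmetric body in $\RR^{2n}$.

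The main technical obstacle is step (b), which in effect demands a complex analog of Boltyanski's arguments for real zonotopes and belt bodies: the cap part of $\partial K$ has no polyhedral structure in general, so one needs a robust geometric tool---perhaps a complex Minkowski decomposition, or a covering argument analogous to the one used for the polydisc's boundary torus $(\S^1)^n$---to bound its illumination by $2^n$. For the equality case, the approach would be to track the tightness of the induction: equality at dimension $n$ should simultaneously force $K \cap H$ to be a linear image of $D^{n-1}$ (via the inductive equality statement) and $K$ itself to be a ``complex cylinder'' over $K \cap H$ along the direction transverse to $H$, ultimately forcing $K$ to be a linear image of $D^n$. Since the real illumination conjecture is itself open, a realistic intermediate target would be to establish the complex conjecture for classes strictly larger than the zonoids already handled in this paper---for instance, for complex bodies with sufficiently smooth boundary (in the spirit of Levi's argument for smooth bodies), or for complex Minkowski sums of polydiscs of lower complex dimensions.
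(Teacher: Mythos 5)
The statement you are trying to prove is not a theorem of the paper at all: it is Conjecture~\ref{conj:complex_ill_conj}, which the authors pose as an open problem. The paper does not contain a proof of it, and indeed cannot, since already the real Levi--Hadwiger--Boltyanski conjecture is open; what the paper does is (i) compute $\ill(D^n)=2^{n+1}-1$ and $\ill^*(D^n)=2^n$ by translating the problem into covering the torus $\TT^n$ by open cubes of side $\tfrac12$ (Proposition~\ref{prop:torus_polydisc_ill} together with Propositions~\ref{prop:lowerboundm} and~\ref{prop:upperboundm}), thereby identifying the correct conjectural extremizer and bound, and (ii) verify the conjecture for the special classes of complex zonotopes (Theorem~\ref{zonotopeclassic}) and complex zonoids (Theorem~\ref{thm:zonoid-illumination}). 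Your proposal, which you yourself frame as a plan with a ``main technical obstacle,'' therefore cannot be matched against a proof in the paper, and it does not itself constitute a proof.

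Beyond this, the inductive scheme you sketch has concrete gaps at both steps. In step (a), illuminating the slice $K\cap H$ gives no control over boundary points of $K$ near $H$: illumination is governed by the normal cones of $K$ (cf.\ the proof of Lemma~\ref{lem:extreme}), and the normal cone of $K$ at a point of $\partial K$ close to $H$ is not related in the required way to the normal cone of $K\cap H$ inside $H$, so lifted directions need not illuminate your ``cylindrical part.'' In step (b), the budget of $2^n$ directions for the ``cap'' is asserted without any mechanism; note that even for the polydisc itself the matching upper bound is \emph{not} obtained by a cylinder/cap or dimension-induction argument --- the paper's Remark~\ref{rem:BMcomparison} explains that precisely such an inductive attempt in \cite{BoltyanskiMartini} contains an error, and the correct proof requires a carefully chosen explicit family of cubes covering $\TT^n$ with a delicate cyclic-order analysis. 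The identity $\ill(D^n)=2\ill(D^{n-1})+1$ is only used in the paper as a \emph{lower}-bound recursion (Proposition~\ref{prop:lowerboundm}); it does not ``underlie'' an inductive upper bound, and there is no reason a general complex body should admit a decomposition realizing it. Finally, the equality case ($\ill(K)=2^{n+1}-1$ only for linear images of $D^n$) is untouched by your outline. A realistic contribution in the spirit of the paper would be what you mention at the end: verifying the conjecture for further classes (e.g.\ smooth complex bodies, or complex analogues of belt bodies), not a proof of the full statement.
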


\begin{conjecture}\label{conj:frac_complex_ill_conj}
Given a complex convex body $K\sub\CC^n$, we have  $\ill^*(K)\le 2^{n}$. Moreover, equality holds if and only if  $K$ is a linear image of $D^n$.
\end{conjecture}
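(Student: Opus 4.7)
The plan is to follow the template of Nasz\'odi's argument \cite{Naszodi09} for centrally-symmetric bodies, which already yields $\ill^*(K)\le 2^{2n}$ for any complex convex body $K\sub\CC^n$ (viewed as a centrally-symmetric body in $\RR^{2n}$), and to sharpen the exponent from $2n$ to $n$ by exploiting the additional $S^1$-invariance $e^{i\theta}K=K$ that a complex body enjoys on top of central symmetry. Any candidate proof must degenerate to the polydisc computation $\ill^*(D^n)=2^n$ (stated in the abstract and verified in the paper) on the extremal body, so this value acts as a sanity check at every step.

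The key step is to construct, for an arbitrary complex convex body $K$, an illuminating Borel measure of total mass at most $2^n$. A natural attempt is to take a centrally-symmetric illuminating measure $\nu$ on $\CC^n$ produced by a Rogers-Shephard-type covering argument, and then symmetrize it under complex rotations via
\[
\mu=\int_0^{2\pi} (R_\theta)_*\nu\,\frac{d\theta}{2\pi},\qquad R_\theta(z)=e^{i\theta}z.
\]
Since $K$ is $S^1$-invariant, $\mu$ still illuminates $K$ with the same total mass as $\nu$; the reduction in mass must therefore come from how $\nu$ is chosen. The idea is to build $\nu$ from a covering of a \emph{fundamental domain} of the $S^1$-action on $\partial K$, i.e., from a covering of a quotient geometry whose effective complex dimension is $n$. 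Implementing this reduction rigorously---for instance, via a complex analog of the Rogers-Shephard inequality $\volk{2n}(K-K)\le\binom{2n}{n}\volk{2n}(K)$ that incorporates the $S^1$-symmetry, or by averaging covering numbers along the $S^1$-orbits---is what should yield the improved factor $2^n$. The fact that the conjecture is verified in the paper for complex zonoids provides a concrete test case that this strategy captures the correct extremal behavior.

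The main obstacle lies in two places. First, identifying the precise complex-geometric inequality that yields exactly $2^n$ (rather than, say, $2^n$ up to a polynomial factor) is nontrivial: naive averaging over $S^1$ only reduces the bound by a dimension-independent constant, so the saving must arise from a genuinely complex-dimensional quantity. Second, the equality case requires a rigidity statement in the spirit of \cite{artsteinavidan2013weighted}: if an illuminating measure achieves total mass exactly $2^n$, then the $S^1$-symmetric structure of the extremal configuration must force $K$ to be a linear image of $D^n$. This rigidity step---distinguishing polydiscs among the much larger class of $S^1$-invariant centrally-symmetric convex bodies that \emph{could} a priori saturate the bound---is likely to be the hardest part, as the extremal problem has no direct analog in the real centrally-symmetric setting where the hypercube is already characterized.
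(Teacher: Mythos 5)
What you have written is a research programme, not a proof, and it is important to be clear that the statement you are addressing is posed in the paper as an open conjecture: the paper never proves Conjecture \ref{conj:frac_complex_ill_conj}. What the paper actually does is (i) compute $\ill^*(D^n)=2^n$ (Theorem \ref{polydiscfracill}, via the reduction of Proposition \ref{prop:torus_polydisc_ill} to fractional covering of $\TT^n$ by open cubes and Proposition \ref{prop:fractionalcover}), which fixes the conjectured constant, and (ii) verify the conjecture for the special classes of complex zonotopes (Theorem \ref{zonotopefrac}) and complex zonoids (Theorem \ref{thm:zonoid-illumination}). There is therefore no proof in the paper against which your argument could be matched, and your text does not supply one either: both decisive steps are explicitly deferred in your own words (``identifying the precise complex-geometric inequality \ldots is nontrivial'' and the rigidity step ``is likely to be the hardest part''). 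Those two steps are precisely the content of the conjecture.

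Concretely, the gap is the following. Nasz\'odi's bound for a centrally-symmetric body in $\RR^{2n}$ gives $\ill^*(K)\le 2^{2n}=4^n$, so passing to $2^n$ requires an \emph{exponential} saving. The $S^1$-averaging $\mu=\int_0^{2\pi}(R_\theta)_*\nu\,\frac{d\theta}{2\pi}$ cannot produce it: as you note yourself, $\mu$ has the same total mass as $\nu$, so all the work is pushed into the unconstructed measure $\nu$. The ``fundamental domain'' heuristic does not deliver the factor either: the $S^1$-orbits are one real dimension, so the quotient of $\partial K$ is $(2n-2)$-real-dimensional, and no stated lemma converts a covering of that quotient into an illuminating measure of mass $2^n$; no complex analog of the Rogers--Shephard inequality with the required strength is known or proved here. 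The equality case is in even worse shape: even in the real fractional problem the characterization of equality is known only for centrally-symmetric bodies (via the weighted covering results cited in the paper), and nothing in your sketch distinguishes linear images of $D^n$ among general $S^1$-invariant bodies. So the proposal should be read as a plausible plan of attack on an open problem, with the two essential ingredients still missing.
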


Note that Conjecture \ref{conj:frac_complex_ill_conj} is not weaker than or comparable to Conjecture \ref{conj:complex_ill_conj}, as opposed to  the situation in the real case, due to the fact that $\ill^*(D^n)<\ill(D^n)$.

\subsection{Main results}\label{sec:main_results}
\subsubsection{Illumination of the polydisc and related covering estimates} \label{subsec:polydisc-intro}

Our first results, proved in Section \ref{polydisc}, concern the illumination numbers of the polydisc.  The fractional illumination number of the polydisc is relatively easy to compute:
\begin{theorem}
 	\label{polydiscfracill}
 		For every $n\ge 1$ we have $\ill^*(D^n)=2^{n}$.
 \end{theorem}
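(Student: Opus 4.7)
The plan is to compute the infimum by giving matching upper and lower bounds, both of which exploit the symmetry of the distinguished boundary (torus) of $D^n$.

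First I would record a characterization of illumination on the polydisc: for $x=(z_1,\dots,z_n)\in\partial D^n$ with ``active'' coordinate set $J(x)=\{j:|z_j|=1\}$, a direction $v=(w_1,\dots,w_n)\in\CC^n$ illuminates $x$ if and only if $\re(\bar z_j w_j)<0$ for every $j\in J(x)$. This is a direct computation using the fact that $|z_j+tw_j|^2=1+2t\,\re(\bar z_j w_j)+t^2|w_j|^2$ for $j\in J(x)$, while for $j\notin J(x)$ the inequality $|z_j+tw_j|<1$ is automatic for small $t>0$. Let $I(x)\sub\CC^n$ denote the (open) set of directions illuminating $x$.

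For the lower bound, assume $\mu$ illuminates $D^n$, so $\mu(I(\zeta))\ge 1$ for every $\zeta=(\zeta_1,\dots,\zeta_n)$ on the torus $T^n=\{\zeta:|\zeta_j|=1,\ \forall j\}$. Let $\sigma$ be the normalized Haar probability measure on $T^n$. Integrating over $T^n$ and applying Fubini gives
$$
1\le\int_{T^n}\mu(I(\zeta))\,d\sigma(\zeta)=\int_{\CC^n}\sigma\bigl(\{\zeta\in T^n:v\in I(\zeta)\}\bigr)\,d\mu(v).
$$
For each $v=(w_1,\dots,w_n)$, the slice in the $j$-th torus coordinate is $\{\zeta_j\in T:\re(\bar\zeta_j w_j)<0\}$, which has $1$-dimensional normalized Haar measure equal to $1/2$ when $w_j\ne 0$ and $0$ when $w_j=0$. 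By independence of the coordinates under $\sigma$, the integrand is at most $2^{-n}$. Hence $\mu(\CC^n)\ge 2^n$.

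For the upper bound, I would exhibit $\mu=2^n\sigma$ (the Haar measure on $T^n$ scaled by $2^n$) as an illuminating measure of total mass $2^n$. For any $x\in\partial D^n$ with $|J(x)|=k$, the same coordinate-by-coordinate computation shows that the set of $v\in T^n$ satisfying $\re(\bar z_j w_j)<0$ for all $j\in J(x)$ has $\sigma$-measure $(1/2)^k$; the coordinates outside $J(x)$ are unconstrained. Therefore
$$
\mu(I(x))=2^n\cdot 2^{-|J(x)|}=2^{\,n-|J(x)|}\ge 1,
$$
so $\mu$ illuminates $D^n$ and $\ill^*(D^n)\le 2^n$. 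Combined with the lower bound this yields equality.

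The only step that requires any subtlety is the characterization of $I(x)$ when $J(x)\ne\{1,\dots,n\}$; once that is in hand, both bounds reduce to the same one-dimensional half-circle measure computation on $T$, with the factor of $2^n$ coming from the product structure. I do not anticipate any serious obstacle beyond bookkeeping.
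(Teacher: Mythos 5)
Your proof is correct: the characterization of $I(x)$ via $\re(\bar z_j w_j)<0$ on the active coordinates is right (each such condition holds for all sufficiently small $t>0$, so a common $t$ exists, and coordinates with $|z_j|<1$ are indeed harmless), the Fubini/averaging step against Haar measure on the torus gives the sharp lower bound $2^n$ since each fixed $v$ illuminates a set of torus points of $\sigma$-measure at most $2^{-n}$, and $2^n\sigma$ illuminates every boundary point because $|J(x)|\le n$. The underlying computation (product structure plus the half-circle of measure $\tfrac12$ per coordinate) is the same as in the paper, but your packaging is genuinely more direct: the paper first reduces to extreme points (Lemma \ref{lem:extreme}), pushes an arbitrary illuminating measure forward onto the distinguished boundary, and identifies $\ill^*(D^n)$ with the fractional covering number $N^*(\TT^n,(0,\tfrac12)^n)$ (Proposition \ref{prop:torus_polydisc_ill}), after which the lower bound is the convolution inequality $\mu*\one_{(0,1/2)^n}\ge\one$ integrated over $\TT^n$ --- which is exactly your Fubini argument in disguise. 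You bypass the reduction entirely by characterizing illumination of \emph{all} boundary points (not only extreme ones) and by running the averaging argument directly against measures on $\CC^n$, so you never need to normalize the measure onto the torus. What the paper's detour buys is reusability and generality: the torus-covering formulation works for cubes of arbitrary side length $\epsilon$ (Theorem \ref{torusfraccov}), and the same equivalence is the backbone of the much harder computation of the classical illumination number $\ill(D^n)=2^{n+1}-1$ and of the light-source variant in Section \ref{othervalues}, none of which your in-place argument addresses.
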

For the classical illumination number more work is needed, but we prove:
\begin{theorem} 
	\label{polydiscclassicill}
	For every $n\ge 1$ we have 
	$\ill(D^n)=2^{n+1}-1$.
\end{theorem}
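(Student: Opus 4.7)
The plan is to translate the claim into a combinatorial covering problem on the distinguished boundary and then handle the two inequalities separately.

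My first step is the reduction. For $z \in \partial D^n$ the binding constraints for illumination are exactly the coordinates with $|z_j|=1$, and a direction $v \in \CC^n$ illuminates $z$ iff $\re(\bar z_j v_j) < 0$ for every such $j$. In particular, the set of torus points $z \in \TT^n := (\partial D)^n$ illuminated by a given $v$ is the product of open semicircles $\prod_j\{w \in \partial D : \re(\bar w v_j) < 0\}$, while any $v$ with some $v_j=0$ illuminates no torus point at all. Conversely, any family of such products covering $\TT^n$ already illuminates all of $\partial D^n$, since each boundary point shares its active coordinates with some torus point. Hence Theorem \ref{polydiscclassicill} is equivalent to the equality $f(n) = 2^{n+1}-1$, where $f(n)$ denotes the minimum number of products of $n$ open semicircles needed to cover $\TT^n$.

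Next I prove $f(n) \ge 2^{n+1}-1$ by induction on $n$. The base $f(1)=3$ follows because two open semicircles cannot cover $\partial D$: the two antipodal endpoints of one would both have to lie in the second, but an open arc of length $\pi$ contains no pair of antipodal points. For the inductive step, suppose the open-semicircle products $R_1,\dots,R_m$ cover $\TT^{n+1}$ and let $I_k \subset \partial D$ be the last-coordinate factor of $R_k$. For each $\xi \in \partial D$ the $\TT^n$-projections of those $R_k$ with $\xi \in I_k$ cover $\TT^n$, so by induction $g(\xi) := |\{k : \xi \in I_k\}| \ge 2^{n+1}-1$. Integrating and using $\mathrm{length}(I_k)=\pi$ yields $m\pi = \int g \ge 2\pi(2^{n+1}-1)$, so $m \ge 2^{n+2}-2$. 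The main obstacle is upgrading this bound by one: supposing $m = 2(2^{n+1}-1)$ forces $g = 2^{n+1}-1$ almost everywhere; but $g$ is a sum of indicators of open sets and hence lower semicontinuous, so the open superlevel set $\{g > 2^{n+1}-1\}$ has measure zero and is therefore empty, giving $g \equiv 2^{n+1}-1$. Finally, at any endpoint $\xi_0$ of any $I_k$ the one-sided limits satisfy $g(\xi_0^+)-g(\xi_0)=e(\xi_0)$ and $g(\xi_0^-)-g(\xi_0)=l(\xi_0)$, where $e(\xi_0), l(\xi_0)$ count the $I_k$'s whose left, resp. right, endpoint is $\xi_0$; the identity $g \equiv 2^{n+1}-1$ forces $e(\xi_0)=l(\xi_0)=0$, contradicting that $\xi_0$ is an endpoint. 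Hence $m \ge 2^{n+2}-1$, completing the induction.

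For the matching upper bound I plan to construct an explicit illumination inductively. The naive lift $\{(v,+1),(v,-1):v \in V_n\}$ of a minimal illumination $V_n$ of $D^n$ covers every boundary point except those with $z_{n+1}=\pm i$; but no single extra direction can pick up both exceptional slices, since the required last-coordinate conditions $\im w<0$ and $\im w>0$ are incompatible. The difficulty is therefore to achieve the target count $2|V_n|+1$ by a cleverer construction---for instance, by replacing $\{+1,-1\}$ with a triple $\{w_0,w_1,w_2\}$ illuminating $D$ for one chosen $v \in V_n$ while only doubling the rest, or by exploiting the multiplicity of $V_n$ so that slight rotations of a few lifts handle the exceptional slices. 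Alternatively, since $D^n$ is a complex zonotope, the upper bound can be deduced from the general verification of Conjecture \ref{conj:complex_ill_conj} for complex zonoids proven elsewhere in the paper.
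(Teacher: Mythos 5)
Your reduction to covering the distinguished boundary $(\partial D)^n$ by products of $n$ open semicircles is correct and is essentially the paper's reduction (Proposition \ref{prop:torus_polydisc_ill}, via Lemma \ref{lem:extreme}), and your lower bound $\ill(D^n)\ge 2^{n+1}-1$ is also correct. In fact it takes a slightly different route than the paper: where Proposition \ref{prop:lowerboundm} fixes one cube and slices along two parallel subtori that it misses to get $a_n\ge 2a_{n-1}+1$ combinatorially, you average the slice count $g(\xi)$ over all $\xi\in\partial D$ to get $m\ge 2f(n)$ and then gain the extra $+1$ from lower semicontinuity of $g$ together with the jump $g(\xi_0^{\pm})-g(\xi_0)=e(\xi_0),l(\xi_0)$ at an endpoint of an arc; that argument is sound.

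The genuine gap is the upper bound $\ill(D^n)\le 2^{n+1}-1$, which is the hard half of the theorem and which you do not prove. What you give is a list of candidate strategies (``replace $\{+1,-1\}$ by a triple for one chosen $v$'', ``slight rotations of a few lifts'') without verifying that any of them covers the two exceptional slices and the rest of the torus simultaneously; this is exactly the kind of inductive lifting that the paper warns about, since the earlier claimed proof of this very inequality by Boltyanski and Martini \cite{BoltyanskiMartini} was of this type and contains an error (see Remark \ref{rem:BMcomparison}), and the paper states that its own covering set does not seem to admit a simple inductive proof. The paper instead exhibits the explicit family of cubes based at $(u,2u,4u,\ldots,2^{n-1}u)$ with $u=k/(2^{n+1}-1)$ and proves coverage by a careful coordinate-by-coordinate induction maintaining the size, proximity and separation properties \ref{enu:set-size}--\ref{enu:set-separation} (Proposition \ref{prop:upperboundm}); some substitute for this analysis is indispensable. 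Your proposed fallback is also not available: Theorems \ref{zonotopeclassic}, \ref{zonotopefrac} and \ref{thm:zonoid-illumination} apply only to complex zonotopes and zonoids that are \emph{not} linear images of $D^n$, asserting a strict inequality against $\ill(D^n)$ rather than computing it, and their proofs use Theorem \ref{polydiscclassicill} itself (see \eqref{ill_com_zon_twice_dim}), so invoking them here would be circular.
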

We mention immediately that Theorem \ref{polydiscclassicill} was already asserted in \cite{BoltyanskiMartini} by Boltyanski and Martini. Their main goal was to construct convex bodies $K$ and $L$ such that $\ill(K \times L) \ne \ill(K) \ill(L)$, as it is indeed clear from Theorem \ref{polydiscclassicill} that 
$\ill(D^{n+m}) < \ill(D^n) \ill(D^m)$. Unfortunately, it appears that the proof in \cite{BoltyanskiMartini} of
the upper bound $\ill(D^n) \le 2^{n+1}-1$ contains a delicate error. Our proof of this inequality is similar in spirit to the one from \cite{BoltyanskiMartini}, but a slightly different construction and a more careful analysis appear to be needed. See Remark \ref{rem:BMcomparison} for more details.

We prove Theorems \ref{polydiscfracill} and \ref{polydiscclassicill} by relating the illumination numbers 
of $D^n$ to the problem of covering $\TT^n$, 
the $n$-dimensional torus, by translates of the open hypercube $\left(0,\frac{1}{2}\right)^n$. 
Similar problems of covering $\TT^n$ by \emph{closed} hypercubes of the form \ $[0,\epsilon]^n$, were studied in 
\cite{MCELIECE1973119} and \cite{Bogdanov22}, and our proof borrows some ideas borrowed these works.

In fact, there is nothing special about cubes of side length $\frac{1}{2}$. In the fractional case we prove:
\begin{theorem}
	\label{torusfraccov}
	For any $\epsilon>0$, we have
	$$N^*\left(\TT^n,\left(0,\epsilon\right)^n\right)=\left( \frac{1}{\epsilon} \right)^n.$$
\end{theorem}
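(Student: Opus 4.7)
The plan is to obtain matching lower and upper bounds via a single averaging argument, with the Haar probability measure on $\TT^n$ serving both as the extremizer and as the averaging weight against which the covering condition is integrated.

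For the lower bound, let $\mu$ be any non-negative Borel measure fractionally covering $\TT^n$ by translates of $U = (0,\epsilon)^n$, so that $\mu(y - U) \ge 1$ for every $y \in \TT^n$. Let $\lambda$ denote the Haar probability measure on $\TT^n$. Integrating the pointwise inequality against $\lambda$, Fubini's theorem combined with the translation invariance of $\lambda$ gives
$$ 1 \;=\; \int_{\TT^n} d\lambda(y) \;\le\; \int_{\TT^n} \mu(y - U)\, d\lambda(y) \;=\; \int \lambda(x + U)\, d\mu(x) \;=\; \epsilon^n \cdot \mu(\RR^n), $$
since $\lambda(U) = \epsilon^n$ (with the natural ``multiplicity'' convention when $\epsilon>1$). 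Rearranging yields $\mu(\RR^n) \ge \epsilon^{-n}$.

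For the upper bound, I would simply take $\mu := \epsilon^{-n}\,\lambda$, i.e., the Haar probability measure scaled by $\epsilon^{-n}$. Then $\mu(\TT^n) = \epsilon^{-n}$, and translation invariance gives
$$ \mu(y - U) \;=\; \epsilon^{-n}\,\lambda(y - U) \;=\; \epsilon^{-n}\,\lambda(U) \;=\; 1 \qquad \text{for every } y \in \TT^n, $$
so $\mu$ fractionally covers $\TT^n$ and $N^*(\TT^n, (0,\epsilon)^n) \le \epsilon^{-n}$, matching the lower bound.

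No real obstacle arises, the only subtlety being the precise interpretation of $\mu(y-U)$ as a subset of $\TT^n$ (with multiplicity if $\epsilon > 1$), which is needed to make the Fubini step honest; this is the same soft averaging principle underlying the fractional bounds of Nasz\'odi in the real illumination setting. In sharp contrast, the corresponding \emph{integer} covering problem on $\TT^n$, which is what actually feeds into the classical illumination number of $D^n$ in Theorem \ref{polydiscclassicill}, is a genuinely combinatorial question of a quite different flavor, cf.\ \cite{MCELIECE1973119, Bogdanov22}.
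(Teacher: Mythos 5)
Your proof is correct and follows essentially the same route as the paper: the upper bound via the uniform (Haar) measure scaled to total mass $\epsilon^{-n}$, and the lower bound by integrating the covering condition over $\TT^n$ with Fubini and translation invariance, which is exactly the paper's convolution inequality $\mu * \one_{(0,\epsilon)^n} \ge \one$ integrated over the torus. The only (immaterial) difference is that the paper states the result for $0<\epsilon<1$, so no multiplicity convention for $\epsilon>1$ is needed there.
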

For classical covering we only have an exact result for special values of $\epsilon$: 
\begin{theorem}
	\label{torusclassiccov}
	For any $1\le m\in\NN$, we have
	$$N\left(\TT^n,\left(0,\frac{1}{m}\right)^n\right)=m^n+m^{n-1}+\cdots+m+1.$$
\end{theorem}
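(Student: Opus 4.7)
The plan is to induct on $n$ using the recursion $c_n := 1 + m + \cdots + m^n = m c_{n-1} + 1$. The base case $n = 1$ is elementary: the $m+1$ intervals $(k/m, (k+1)/m)$ for $k = 0, \ldots, m-1$ together with $(-1/(2m), 1/(2m))$ realize the upper bound, while $m$ open intervals of length $1/m$ in $\TT^1$ have total measure $1$ and would thus need to be essentially disjoint---impossible, since a disjoint open cover of the connected space $\TT^1$ could contain only a single set.

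For the inductive \emph{lower bound}, I would slice $\TT^n$ by the last coordinate. A cube $y_j + (0,1/m)^n$ meets the slice $\TT^{n-1}\times\{x_n\}$ iff $y_j^{(n)} \in (x_n - 1/m, x_n) \pmod 1$, in which case the intersection is an $(n-1)$-dim open cube of side $1/m$. So the counting function $f(x_n) := |\{j : y_j^{(n)} \in (x_n - 1/m, x_n)\}|$ satisfies $f \ge c_{n-1}$ pointwise by induction, and integrating gives $N/m = \int_{\TT^1} f \ge c_{n-1}$, hence $N \ge m c_{n-1}$. To reach $N \ge c_n$, I would argue by contradiction: if $N = m c_{n-1}$, the equality $\int f = c_{n-1}$ forces $f = c_{n-1}$ almost everywhere, and since $f$ is a finite sum of indicators of open intervals (hence lower semicontinuous), the set $\{f > c_{n-1}\}$ is open of measure zero and thus empty, giving $f \equiv c_{n-1}$. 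A jump analysis then shows that each $+1$ jump of $f$ at $y_j^{(n)}$ must be canceled by a $-1$ jump at the same point, forcing the multiset $Y = \{y_j^{(n)}\}$ to be invariant under $+1/m$ and thus to decompose into $\ZZ_m$-orbits. Picking any orbit representative $z \in [0, 1/m)$, the $m$ orbit members land on the boundary of $(z - 1/m, z)$ and contribute zero to $f(z)$, yielding $f(z) \le c_{n-1} - 1$---the desired contradiction.

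For the inductive \emph{upper bound}, my plan is to construct an explicit cover of $\TT^n$ by $c_n$ cubes placed linearly at $y_j = (j m^{n-1}/c_n,\, j m^{n-2}/c_n,\, \ldots,\, j/c_n) \pmod 1$ for $j = 0, 1, \ldots, c_n - 1$. The structural ingredient is $\gcd(m, c_n) = 1$ (since $c_n \equiv 1 \pmod m$), so each projection $y_j^{(k)}$ sweeps a full arithmetic progression on $\TT^1$. Translating the covering condition, for each $x \in \TT^n$ I need to find $j \in \ZZ_{c_n}$ solving the system $c_n x_k - j m^{n-k} \in (0, c_n/m) \pmod{c_n}$ for all $k$. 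The main obstacle---and the technical heart of the argument---is verifying the existence of such $j$: using the bijection $j \mapsto j m^{n-k}$ on $\ZZ_{c_n}$, each of the $n$ conditions becomes membership in an arc of $\lceil c_n/m \rceil = c_{n-1} + 1$ consecutive residues, and I must show that these $n$ arcs share a common element. I plan to tackle this either by a direct combinatorial argument exploiting the coprimality-induced bijections, or via a secondary induction in which the $(n-1)$-coordinate projections of the cubes hitting any slice $\TT^{n-1} \times \{x_n\}$ form the analogous linear cover of $\TT^{n-1}$, allowing the induction hypothesis to supply slice-wise coverage---consistent with the lower-bound slicing, which guarantees each slice receives $c_{n-1}$ or $c_{n-1}+1$ consecutively indexed cubes.
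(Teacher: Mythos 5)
Your lower bound is essentially sound, and it is a genuinely different route from the paper's: you average the slice condition ($\int_{\TT^1} f = N/m$ together with $f \ge c_{n-1}$ pointwise gives $N \ge m\,c_{n-1}$) and then analyze the equality case, whereas the paper's Proposition on the lower bound is purely combinatorial (one distinguished cube plus $m$ parallel slices through shifts of its first coordinate, each slice needing $c_{n-1}$ cubes, all families disjoint), which avoids any equality-case analysis. Your equality-case step can be shortened and slightly repaired: once $f = c_{n-1}$ a.e., $f$ is locally constant off the finitely many arc endpoints, so at any point $z = y_j^{(n)}$ the right-hand limit is $c_{n-1}$ while $f(z)$ is at least one less, contradicting $f \ge c_{n-1}$ immediately; no $\ZZ_m$-orbit decomposition is needed, and note that a $+1$ jump ``canceled'' by a $-1$ jump at the same point makes the one-sided limits agree but still forces the value at that point to drop, which is the real contradiction. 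Also your $n=1$ base construction is wrong for $m\ge 2$: the arcs $(k/m,(k+1)/m)$ together with $(-1/(2m),1/(2m))$ leave the points $1/m,\dots,(m-1)/m$ uncovered; use arcs based at $k/(m+1)$, $k=0,\dots,m$, which is just your general construction for $n=1$.

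The genuine gap is the upper bound. You write down (up to reversing coordinates) exactly the configuration the paper uses, but you do not prove that it covers $\TT^n$; you explicitly defer what you yourself call the technical heart to either an unspecified combinatorial argument or a ``secondary induction.'' Neither is routine. The slice-wise induction does not go through as described: the cubes meeting a slice $\TT^{n-1}\times\{x_n\}$ project to $c_{n-1}$ or $c_{n-1}+1$ points of the form $j\,(m^{n-1},\dots,m)/c_n$ with $j$ running over consecutive residues, which is \emph{not} the analogous configuration on $\TT^{n-1}$ (whose denominators would be $c_{n-1}$), so the induction hypothesis of the theorem says nothing about these points; one must prove a strengthened statement that controls the spacings of the surviving indices after multiplication by $m$ and the wrap-around at the ends of the arc. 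This is precisely what the paper's proof does, via the nested sets $U_0\supseteq U_1\supseteq\cdots\supseteq U_n$ with the quantitative separation $\delta_k$ (properties (P1)--(P3)), and Remark \ref{rem:BMcomparison} explains that the naive induction of the kind you sketch is exactly where the published Boltyanski--Martini argument breaks down. A counting or union bound cannot substitute for this either: each of the $n$ residue conditions excludes roughly a $(1-\frac 1m)$-fraction of $\ZZ_{c_n}$, so mere cardinalities do not force a common solution $j$. As it stands, the upper bound, and hence the theorem, is not established by your proposal.
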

The precise definition of the covering numbers $N(\TT^n, (0,\epsilon)^n )$ and $N^*(\TT^n, (0,\epsilon)^n )$
is given in Section \ref{polydisc}. 

In the final section of this paper, Section \ref{othervalues}, we will explain how the more general covering numbers 
 $N(\TT^n,(0,\epsilon)^n)$ are also related to a certain illumination problem for the polydisk. We will also 
 discuss the computation of $N(\TT^n,(0,\epsilon)^n)$ in dimensions $n=2$ and $n=3$, using the results of 
 \cite{MCELIECE1973119} and \cite{Bogdanov22}.

\subsubsection{Illumination of Complex Zonotopes}

In Section \ref{zonotope} we show that Conjectures \ref{conj:complex_ill_conj} and \ref{conj:frac_complex_ill_conj} hold for the class of \emph{complex zonotopes}. Recall that a real zonotope is the Minkowski sum of segments. Up to translations we can take these segments to be centrally symmetric and consider only zonotopes of the form
$$\sum_{i=1}^N\left([-1,1]a_i\right)=
\left\{ \sum_{i=1}^N\sigma_i a_i \ :\  \sigma_i\in [-1,1] \text{ for all } 1 \le i\le N\right\} \subseteq \RR^n,$$
for  some $N\in \NN$ and vectors ${\{ a_i\}}_{i=1}^N$ in $\RR^n$. Hadwiger's conjecture was solved for zonotopes in \cite{Martini87}, with another proof sketched in \cite{boltjanski92}. As we explain see the inequality 
$\Ill(K) \le 2^n$ is easy for zonotopes, and most of the work concerns the characterization of the equality case.

As a natural analogy we define:
\begin{definition}
    A \emph{complex zonotope} is a Minkowski sum of complex discs, i.e. a set of the form:
    \[
    \sum_{i=1}^N\left(D a_i\right)=
\left\{ \sum_{i=1}^N\sigma_i a_i \ :\  \sigma_i\in D \text{ for all } 1 \le i\le N\right\} \subseteq \CC^n,
    \]
    for some $N\in \NN$ and vectors ${\{ a_i\}}_{i=1}^N \subseteq \CC^n$. 
\end{definition}
In Section \ref{zonotope} we prove:
\begin{theorem} \label{zonotopeclassic}
    Let $K \subseteq \CC^n$ be a full dimensional complex zonotope which is not a linear image of the polydisc $D^n$. 
    Then  $\ill(K)< \ill(D^n)$.
\end{theorem}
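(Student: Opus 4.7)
I first reduce $K$ to a minimal form. If two generators $a_i,a_j$ of $K=\sum_{i=1}^N Da_i$ lie on the same complex line, i.e.\ $a_j=\lambda a_i$ with $\lambda\in\CC$, then $Da_i+Da_j=D\bigl((1+|\lambda|)a_i\bigr)$, so they can be combined into a single generator. Iterating, I may assume that the $a_i$ lie in pairwise distinct complex lines. Full-dimensionality of $K$ gives $N\ge n$, and the hypothesis that $K$ is not a linear image of $D^n$ forces $N\ge n+1$: otherwise, any $n$ such generators would be $\CC$-linearly independent and would realize $K$ as $TD^n$ for the complex-linear map $T\colon e_i\mapsto a_i$.

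After a complex-linear change of coordinates I may assume the first $n$ generators are the standard basis of $\CC^n$, so that $K=D^n+Q$ with $Q=\sum_{i=n+1}^N Da_i\ne\{0\}$. A general principle is that illumination is monotone under Minkowski summation: if a direction $v$ illuminates $x_1\in\partial K_1$, i.e.\ $x_1+tv\in\Int K_1$ for some $t>0$, then for any $x_2\in K_2$ one has $(x_1+tv)+x_2\in\Int K_1+K_2\subseteq\Int(K_1+K_2)$, so $v$ illuminates $x_1+x_2$ as well; together with the existence of a decomposition $x=x_1+x_2$ with $x_i\in\partial K_i$ for every $x\in\partial(K_1+K_2)$, this yields $\ill(K_1+K_2)\le\ill(K_1)$. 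Applied to the illumination set $V$ of size $2^{n+1}-1$ for $D^n$ furnished by Theorem \ref{polydiscclassicill}, this already gives $\ill(K)\le 2^{n+1}-1$. The content of the theorem is therefore the strict inequality, and it reduces to exhibiting a single direction $v_0\in V$ such that $V\setminus\{v_0\}$ still illuminates $K$.

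To produce such a $v_0$ I would use the explicit description of $V$ from the proof of Theorem \ref{polydiscclassicill}, where illumination of $D^n$ is equivalent to covering the torus $\TT^n$ by $2^{n+1}-1$ translates of the open cube $(0,1/2)^n$ as in Theorem \ref{torusclassiccov} with $m=2$. Each direction in $V$ is responsible for a specific open region on the distinguished torus face of $\partial D^n$; each extra generator $a_k$ with $k>n$ contributes, via $Da_k$, a new arc of boundary to $K$ that effectively enlarges one of these open regions on the torus side, making one of the covering translates redundant on $\partial K$. The main obstacle I foresee is verifying this redundancy rigorously: the cover of $\TT^n$ witnessing $\ill(D^n)=2^{n+1}-1$ is tight, so one must track carefully how the fresh arcs from the generators of $Q$ translate into enlargements of half-cubes in the torus covering, and argue---uniformly across the possible placements of $a_{n+1},\dots,a_N$---that at least one such enlargement is always enough to drop one translate while retaining the covering property. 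A clean route is to re-cast the whole problem as a covering problem on $\TT^n$ in which the extra generators appear as additional open translates, and then apply a perturbed variant of the combinatorial argument underlying Theorem \ref{torusclassiccov}.
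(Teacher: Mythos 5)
Your reductions and the non-strict bound are fine: combining parallel generators, noting $N\ge n+1$, and using monotonicity of illumination under Minkowski sums (this is Fact \ref{sum_convex} in the paper) to get $\ill(K)\le\ill(D^n)=2^{n+1}-1$. But the entire content of the theorem is the \emph{strict} inequality, and there your proposal stops at a heuristic. The plan to delete one direction $v_0$ from the explicit set $V$ coming from the torus covering of Theorem \ref{torusclassiccov} is never carried out, and as stated it rests on an unjustified picture of $\partial K$: the extremal points of $K=\sum_{j=1}^N Da_j$ are (contained in) sums $\sum_j x_ja_j$ with $|x_j|=1$ over \emph{all} $N$ generators, not a copy of $\partial D^n$ with some regions ``enlarged''; whether a given direction illuminates such a point depends on the possibility of rewriting $x+tv$ using the linear dependences among $a_1,\dots,a_N$ (the refinement mechanism), which your sketch does not engage with. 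Since the covering of $\TT^n$ by $2^{n+1}-1$ half-cubes is tight, the redundancy of one translate is exactly the hard step, and the claim that ``at least one enlargement is always enough, uniformly over the placements of $a_{n+1},\dots,a_N$'' is precisely what remains unproved. So there is a genuine gap at the decisive step.

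For comparison, the paper takes a different route to strictness: after Lemma \ref{lem:non_poly_zonotope_form} one may assume $K=Z_C(a_1,\dots,a_{n+1})$ with a dependence $\sum_j\lambda_ja_j=0$, $\lambda_j\in\{0,1\}$, $\lambda_{n-1}=\lambda_n=\lambda_{n+1}=1$, and then one proves the stronger bound $\ill(K)\le 2\,\ill(D^{n-1})=2^{n+1}-2=\ill(D^n)-1$. This is done by taking minimal illuminating sets $V_1,V_2$ of the $(n-1)$-dimensional polydisc images $Z_C(a_1,\dots,a_{n-1})$ and $Z_C(a_1,\dots,a_{n-2},a_n)$, modifying each $v\in V_2$ to $v'=v-v_na_{n+1}$, and checking via refinements by small multiples of the dependence that $V_1$ handles all extremal points with $x_n\ne-x_{n+1}$ while the shifted set handles $x_n=-x_{n+1}$. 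If you want to salvage your approach, you would need either to prove an analogous quantitative improvement or to verify your redundancy claim rigorously against the actual extremal structure of $K$ and the refinement step; at present neither is done.
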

\begin{theorem} \label{zonotopefrac}
    Let $K \subseteq \CC^n$ be a full dimensional complex zonotope which is not a linear image of $D^n$. 
    Then  $\ill^\ast(K)< \ill^\ast(D^n)$.
\end{theorem}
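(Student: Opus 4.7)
The proof I propose has two parts: first an upper bound $\ill^*(K) \le 2^n$ valid for every complex zonotope, then a strict inequality when $K$ is not a linear image of $D^n$.

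For the upper bound, since $K$ is full-dimensional, its generators $\{a_1,\dots,a_N\}$ contain a $\CC$-basis of $\CC^n$; after applying the corresponding linear map (which preserves $\ill^*$) we may assume this basis is $\{e_1,\dots,e_n\}$, so $K = D^n + L$ with $L = \sum_{i>n} D a_i$. The outer normal cone of a Minkowski sum decomposes as $N_{A+B}(y+z) = N_A(y) \cap N_B(z) \subseteq N_A(y)$, so any measure fractionally illuminating $A$ automatically illuminates $A+B$. Applied with $A = D^n$, together with Theorem~\ref{polydiscfracill}, this yields $\ill^*(K) \le \ill^*(D^n) = 2^n$.

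For the strict inequality when $K$ is not a linear image of $D^n$, the hypothesis forces at least one generator $a$ of $L$ to have two or more non-zero complex coordinates (otherwise every generator of $L$ is a complex scalar multiple of some $e_j$ and $K$ is just a rescaling of $D^n$). I would then analyze the \emph{tight} boundary points of $K$, namely $x = y + z_L$ with $y \in \partial D^n$ having all coordinates of unit modulus, $z_L \in \partial L$, and $N_{D^n}(y) \subseteq N_L(z_L)$, so that $N_K(x) = N_{D^n}(y)$. At each such $x$ the optimal polydisc-illuminating measure $\mu_0$ (uniform on $\{u \in \CC^n : |u_j|=1 \text{ for all } j\}$ with total mass $2^n$) saturates the constraint $\mu_0(\{u : u \text{ illuminates } x\}) \ge 1$ with equality. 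Using the explicit description of normal cones for complex zonotopes (each generator $a$ of $L$ cuts out $N_L$ by a phase constraint on the projection of $w$ onto the complex line through $a$), the containment condition forces a phase-alignment equation of the form $\arg y_j = \arg a_j + c$ for each generator $a$ (with $c$ a common shift across the indices $j$ having $a_j \neq 0$), pinning $y$ to a strictly lower-dimensional sub-family of the full set of torus points.

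The final step is an LP-duality argument: $\ill^*(K)$ equals the maximum total mass of a measure $\nu$ on $\partial K$ satisfying $\nu(\{x : u \text{ illuminates } x\}) \le 1$ for every $u \in \CC^n$. Non-tight boundary points of $K$ have strictly larger illumination sets than tight ones (since there $N_K(x) \subsetneq N_{D^n}(y)$), so each unit of $\nu$-mass placed on a non-tight $x$ saturates more $u$-constraints per unit of mass than on a tight point; this pushes an optimal $\nu$ to be effectively concentrated on the tight set, whose reduced dimension combined with the $\le 1$ constraint yields $\nu(\partial K) < 2^n$ and hence $\ill^*(K) < 2^n$. I expect the hardest step to be making this LP inequality quantitative---showing that no mixture of tight and non-tight mass can reach $2^n$---which likely requires a careful averaging or projection argument exploiting the strict containment of the tight sub-family inside the full torus of directions.
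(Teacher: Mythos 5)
Your first step (writing $K=D^n+L$ after a linear normalization and using monotonicity of illumination under Minkowski summands to get $\ill^*(K)\le \ill^*(D^n)=2^n$) is correct and matches Fact \ref{sum_convex} together with Theorem \ref{polydiscfracill}. But the entire content of the theorem is the \emph{strict} inequality, and there your argument has a genuine gap. The LP-duality step is only a heuristic: even granting that the "tight" boundary points (those with $N_K(x)=N_{D^n}(y)$) form a lower-dimensional sub-family of the torus of extreme points, nothing in the proposal bounds the total mass of a dual-feasible measure supported (or "effectively concentrated") on that set strictly below $2^n$ --- a set of lower dimension can still carry arbitrary mass, and the constraint $\nu(\{x:\ u \text{ illuminates } x\})\le 1$ must be exploited quantitatively, which you acknowledge but do not do. The complementary-slackness intuition ("non-tight points saturate more constraints per unit of mass, so the optimum concentrates on tight points") is also not a valid deduction without already knowing an optimal primal/dual pair, and strong duality for this infinite-dimensional program would itself need justification. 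So as written the proposal proves only $\ill^*(K)\le 2^n$, not $\ill^*(K)<2^n$.

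For comparison, the paper takes a different and fully constructive route: after reducing (Lemma \ref{lem:non_poly_zonotope_form}) to $K=Z_C(a_1,\dots,a_{n+1})$ with a real linear dependence among the generators, it considers the \emph{real} zonotope $Z_R(a_1,\dots,a_{n+1})$, which is not a linear image of a cube, and invokes Martini's theorem to get a finite illuminating set $Y$ with $|Y|\le 3\cdot 2^{n-2}$. Each direction $y\in Y$ is then smeared uniformly over the half-circle of phases $\{e^{\theta i}y:\ \theta\in(0,\pi)\}$, producing a measure $\mu_Y$ of total mass $|Y|<2^n$; the key geometric lemma is that for every extreme point $x$ of $K$ and almost every phase $\theta$, slicing each disc $Da_j$ by the line through $x_ja_j$ in direction $e^{\theta i}a_j$ yields a real zonotope contained in $K$ whose relative interior lies in $\Int(K)$ and which $e^{\theta i}Y$ illuminates. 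If you want to salvage your approach you would need to supply the missing quantitative mechanism; alternatively, importing the real-zonotope bound as the paper does sidesteps the duality analysis entirely.
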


Since $\ill^\ast(D^n) = 2^n < 2^{n+1}-1 = \ill(D^n)$, Theorem \ref{zonotopefrac} does not follow from Theorem
\ref{zonotopeclassic}. In fact our proofs will be very different: The proof of Theorem \ref{zonotopefrac}
will use the corresponding result for real zonotopes, while the the proof of Theorem \ref{zonotopeclassic} will 
rely on the strict inequality $\ill(D^{n+1})>2\cdot \ill (D^n)$ which has no equivalent in the real case.

\subsubsection{Illumination of Complex Zonoids}
In Section \ref{sec:zonoids} we establish that Conjectures 1.1 and 1.2 also hold for \emph{complex zonoids}, which
are Hausdorff limits of complex zonotopes. To be precise, recall that the Hausdorff distance between two 
(real or complex) convex bodies $K$ and $L$ is defined by
\[ d_H (K,L) = \min\left\{ \lambda >0 \ :\ K\subseteq L + \lambda B \text{ and } 
     L\subseteq K + \lambda B\right\}, \]
where $B$ is the unit Euclidean ball. We then say that $Z\subseteq \CC^n$ is a complex zonoid if there exists a 
sequence of complex zonotopes $\{ K_i \}_{i=1}^\infty \subseteq \CC^n$ such that $d_H(K_i, Z)\to 0$. 

Our result reads as follows:
\begin{theorem} \label{thm:zonoid-illumination}
    Let $Z \subseteq \CC^n$ be a full dimensional complex zonoid which is not a linear image of $D^n$. 
    Then  $\ill(Z)< \ill(D^n)$ and $\ill^\ast(Z)< \ill^\ast(D^n)$.
\end{theorem}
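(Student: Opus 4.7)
The strategy is to derive Theorem \ref{thm:zonoid-illumination} from Theorems \ref{zonotopeclassic} and \ref{zonotopefrac} via Hausdorff approximation of the zonoid $Z$ by complex zonotopes. By definition, $Z$ is the Hausdorff limit of some sequence of complex zonotopes, so I would fix such a sequence $Z_k \to Z$. Since the family of linear images of $D^n$ is parametrized by the finite-dimensional group $GL_n(\CC)$ and is closed (among full-dimensional bodies) in the Hausdorff metric, and $Z$ is not such an image, after small perturbations within the class of complex zonotopes we may arrange each $Z_k$ to be full-dimensional and not a linear image of $D^n$. Theorems \ref{zonotopeclassic} and \ref{zonotopefrac} then yield $\ill(Z_k) \le 2^{n+1}-2$ and $\ill^*(Z_k) < 2^n$ for every $k$.

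The main obstacle is that both $\ill$ and $\ill^*$ are only upper semi-continuous under Hausdorff convergence (as one sees by smoothing a cube $C_n$, where $\ill$ of the smooth approximants equals $n+1$ while $\ill(C_n) = 2^n$), so naive limit passage produces $\ill(Z) \ge \limsup_k \ill(Z_k)$, which is the wrong inequality. To overcome this for the fractional case, I would fix near-optimal illuminating measures $\mu_k$ for $Z_k$; after normalizing so that each $\mu_k$ is supported on the unit sphere, the $\mu_k$ have uniformly bounded total mass and lie in a weak$^*$-compact set, so we may extract a subsequential limit $\mu$ with $\mu(\CC^n) \le \liminf_k \ill^*(Z_k)$. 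The crux is then to show that $\mu$ actually illuminates every boundary point of $Z$, exploiting the integral structure of $Z$ as a superposition of complex discs and the fact that the boundary (extreme points and normal cones) of a zonoid is controlled by its generating measure in a Hausdorff-continuous manner. To obtain the strict inequality $\ill^*(Z) < 2^n$, a quantitative stability version of Theorem \ref{zonotopefrac} is needed, giving a uniform gap $\ill^*(Z_k) \le 2^n - \delta$ in terms of the Hausdorff distance between $Z_k$ and the family of polydiscs.

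For the classical case, the same approximation scheme applies with finite sets of at most $2^{n+1}-2$ directions in place of measures; because $\ill$ is integer-valued, no quantitative stability beyond the strict inequality of Theorem \ref{zonotopeclassic} is required. In both parts, the hardest step, which I expect to be the crux of the proof, is verifying that the limit configuration illuminates every boundary point of $Z$. This hinges on a careful analysis of how boundary structure varies under Hausdorff approximation of zonoids by zonotopes; an alternative (and perhaps cleaner) route would be to directly construct an illuminating set or measure for $Z$ from its generating measure, using the strict inequality for the zonotope case only as a model rather than as a black box.
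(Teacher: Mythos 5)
Your proposal has the right ingredients (the zonotope theorems, Hausdorff approximation, and an awareness that $\ill$ and $\ill^\ast$ are only upper semi-continuous), but its central step is a genuine gap, not a technicality. After extracting a weak$^\ast$ limit $\mu$ of illuminating measures $\mu_k$ for the approximating zonotopes $Z_k$, you must show that $\mu$ illuminates every boundary point of $Z$; this is exactly a lower-semicontinuity statement along the sequence $Z_k\to Z$, and it is false in general. Illumination is genuinely only upper semi-continuous: for instance $D^n+\epsilon B$ is a complex zonoid Hausdorff-close to $D^n$ that is illuminated by very few directions (it is smooth), so limits of illuminating configurations of the approximants need not illuminate the limit body. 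Your suggestion that the boundary (extreme points, normal cones) of a zonoid is ``controlled by its generating measure in a Hausdorff-continuous manner'' is precisely what would have to be proved, and normal cones and extreme points are not Hausdorff-continuous; nothing in your outline supplies this. The secondary issue you flag yourself --- that the fractional case needs a uniform gap $\ill^\ast(Z_k)\le 2^n-\delta$ --- is a symptom of the same structural problem (your comparison body varies with $k$), though it could in principle be patched since the zonotope proof actually gives the uniform bound $\ill^\ast\le 3\cdot 2^{n-2}$.

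The paper avoids the whole difficulty by running the approximation in the opposite direction, so that upper semi-continuity is used in the direction in which it is true. Using the representation $h_Z(\theta)=\int_S|\langle x,\theta\rangle_\CC|\,d\mu(x)$ (Proposition \ref{pro:zonoidmeasure}), one picks $a_1,\dots,a_n$ in $\supp\mu$ forming a basis and $a_{n+1}\in\supp\mu$ not a complex multiple of any $a_j$ (possible since $Z$ is not a linear image of $D^n$), and sets $K=Z_C(a_1,\dots,a_{n+1})$, a \emph{fixed} zonotope to which Theorems \ref{zonotopeclassic} and \ref{zonotopefrac} apply. Restricting and rescaling $\mu$ on small neighborhoods of the $a_j$'s produces, for every $\delta>0$, a decomposition $Z=Z_1+Z_2$ with $d_H\bigl(\tfrac{1}{m}Z_1,K\bigr)<\delta$. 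Then Proposition \ref{prop:usc} (upper semi-continuity \emph{at $K$}: bodies near $K$ are illuminated almost as well as $K$) together with Fact \ref{sum_convex} (illuminating a summand illuminates the sum) gives $\ill^\ast(Z)\le\ill^\ast(Z_1)=\ill^\ast(\tfrac{1}{m}Z_1)\le\ill^\ast(K)+\epsilon$ for every $\epsilon>0$, and likewise $\ill(Z)\le\ill(K)$; since $K$ is fixed, no quantitative stability of the zonotope theorems is needed. If you want to salvage your scheme, you would need to replace the blind limit passage by an argument of this type that exploits the summand structure of $Z$; as written, the proposal does not prove the theorem.
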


The real version of Theorem \ref{thm:zonoid-illumination} was established in \cite{boltjanski92}, 
using the corresponding result for zonotopes. Of course, in the real case the result for 
fractional illumination follows from the result for classical illumination, which is not true in the complex case. 
Nonetheless, the proof of \cite{boltjanski92} can be extended and adapted to prove Theorem \ref{thm:zonoid-illumination}.
We prefer however to present a new proof, which is  in many ways simpler. 
We present the proof in the complex case, though it works for the real case as well. 

\subsection*{Acknowledgements}  The first named author was funded by ISF grant no.~2574/24 and NSF-BSF grant no.~2022707. The second and third named 
authors were funded by ISF grant no.~784/20. We thank Yaron Ostrover and Pazit Haim-Kislev for useful conversations on complex convex bodies.
\section{Illumination of the polydisc}
\label{polydisc}

In this section, we compute the classical and fractional illumination numbers of the polydisc $D^n$, 
namely prove Theorems \ref{polydiscfracill} and \ref{polydiscclassicill}. 
To do so, we first show, in Section \ref{sec:equiv}, that these problems of illuminating the polydisc are equivalent to that of covering a torus by open cubes of side-length $\frac{1}{2}$. In  Section \ref{sec:torus}, we compute certain covering numbers of the torus by open cubes (of various side-lengths) via explicit constructions, which, in turn, implies our theorems.
\subsection{An equivalent formulation}\label{sec:equiv}
Our first task for the proof of Theorems \ref{polydiscfracill} and \ref{polydiscclassicill} is to relate the illumination of the polydisc to the covering of the torus by open cubes.

We first claim that in order to illuminate $D^n$, it is enough to illuminate the \emph{extremal points} of $D^n$.
Recall that a point $x\in\partial K$ is said to be an extremal point of $K$ if whenever $x=\lambda y+(1-\lambda)z$, for some $y,z\in K$ and $\lambda\in (0,1)$, it follows that $y=z=x$. 
Denote the set of extremal points of $K$ by ${\extreme}(K)$. We then have:
\begin{lemma}
\label{lem:extreme}
Let  $K\subseteq\RR^n$ be  a convex body. If a set of directions $V\subseteq \S^{n-1}$ illuminates ${\extreme}(K)$ then it illuminates $K$. 
\end{lemma}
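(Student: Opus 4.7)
The plan is to reduce the illumination of an arbitrary boundary point of $K$ to the illumination of an appropriate extremal point, exploiting the face structure of $K$. Interior points of $K$ need no attention (they do not need illumination in the definition), so I focus on $x \in \partial K$.

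First I would let $F$ be the smallest face of $K$ containing $x$. Two standard facts about faces of convex bodies are then available: (i) $x$ lies in the relative interior $\relint(F)$, and (ii) $F = \conv(\extreme(F))$ with $\extreme(F) \subseteq \extreme(K)$ (indeed, if $e \in \extreme(F)$ is written as a midpoint of two points of $K$, the defining property of a face forces these two points to lie in $F$, whence $e$ is also extremal in $F$, giving both containments). Pick any extremal point $e \in \extreme(F) \subseteq \extreme(K)$; by hypothesis some direction $v \in V$ illuminates $e$, meaning $e + tv \in \Int(K)$ for some $t > 0$.

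The key convex-combination step comes next. Since $x \in \relint(F)$ and $e \in F$, the segment from $e$ through $x$ can be extended a positive distance while remaining in $F$; equivalently, there exist $\alpha \in (0,1]$ and $y \in F \subseteq K$ with $x = \alpha e + (1-\alpha) y$. Then
\[ x + \alpha t v \;=\; \alpha (e + tv) + (1-\alpha)\, y, \]
which is a convex combination with positive weight on the interior point $e + tv \in \Int(K)$ and with the other point $y$ in $K$. Hence $x + \alpha t v \in \Int(K)$, so $v$ illuminates $x$.

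There is no real obstacle once the face $F$ has been identified; the whole argument is then a one-line convex combination. The only mildly delicate point is the convex-geometric input (ii), that extremal points of $F$ are extremal in $K$, which must be spelled out carefully but is entirely routine.
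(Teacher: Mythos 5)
Your proof is correct, but it takes a genuinely different route from the paper's. The paper fixes a supporting hyperplane $H$ of $K$ at $x$, picks an extreme point $y$ of the exposed face $H\cap K$, and concludes via the dual characterization of illumination: $v$ illuminates a boundary point exactly when $\iprod{v}{\theta}<0$ for all $\theta$ in the normal cone, together with a monotonicity of normal cones along the face. You instead argue primally: you take the \emph{minimal} face $F$ containing $x$, so that $x\in\relint(F)$, use that $\extreme(F)\subseteq\extreme(K)$, write $x=\alpha e+(1-\alpha)y$ with $e\in\extreme(F)$, $y\in F$, $\alpha\in(0,1]$, and push $x$ into $\Int(K)$ by the standard fact that a convex combination with positive weight on an interior point is interior. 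The paper's argument is shorter if one is willing to quote the normal-cone facts from Schneider; yours is self-contained, uses nothing beyond the line-segment principle, and the deliberate choice of the minimal face (rather than an arbitrary supporting hyperplane and an arbitrary extreme point of its face) is exactly what makes the comparison between $x$ and the chosen extreme point automatic --- in the normal-cone language, the inclusion $N_x(K)\subseteq N_y(K)$ is guaranteed precisely when $x$ lies in the relative interior of a face containing $y$, which your setup provides by construction. Two small points to make explicit: the degenerate case $F=\{x\}$ (i.e.\ $x\in\extreme(K)$) is trivial, corresponding to $\alpha=1$; and the extension step ``$x\in\relint(F)$, $e\in F$ implies $x$ lies in the open segment between $e$ and some $y\in F$'' should be cited (it is the standard relative-interior prolongation property, e.g.\ Rockafellar's characterization of $\relint$), as should the existence of the minimal face.
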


It seems to us that Lemma  \ref{lem:extreme} should be well-known, however we could only find it stated in the literature for special classes of bodies (e.g., in  \cite{KissAndWet} for polytopes). 
For that reason and for the convenience of the reader, we provide a short proof for it. To this end, given a convex body $K\subseteq\RR^n$ and $x\in \partial K$ we denote by $N_x(K)=\{\theta\in \S^{n-1}\,:\, \iprod{\theta}{y-x}<0
\,\,\forall y\in K\}$ the
normal cone of $K$ at the point
$x$.
\begin{proof}[Proof of Lemma  \ref{lem:extreme}]
Fix  $x\in\partial K$, and let $H$ be a supporting hyperplane of $K$ passing through $x$. Since $\conv({\extreme}(K))=K$ (see e.g., \cite[Corollary 1.4.5]{Schneider2013}) the face $H\cap K$ contains a point $y\in {\extreme}(K)$.

Suppose that $v\in V$ illuminates $y$. This means that $\iprod{v}{\theta}<0$ for all $\theta\in N_y(K)$ (see e.g., \cite[Eq. (2.2)]{Schneider2013}. Since  $N_x(K)\subseteq N_y(K)$, it follows that $v$ illuminates $x$ as well.
\end{proof}

Going back to the polydisc, we see that it is enough to illuminate the $n$-dimensional torus:
\begin{equation*}\label{eq:ext_poly}
D_0^n:={\extreme}(D^n)=\partial D^1\times\dots\times \partial D^1.
\end{equation*}
It will be convenient for us to work with the flat torus $\TT^n = \RR^n / \ZZ^n$, which is a group with respect to the standard addition operation. For $a \in \TT^n$ and $\lambda \in \RR^n$, their addition $a + \lambda \in \TT^n$ is well-defined. For $a \in \TT^n$ and $k \in \NN$ we write $ka = a+a+\cdots+a\in \TT^n$. Note that $\lambda a$ is not well-defined if $a \in \TT^n$ and $\lambda \in \RR$ is not an integer.  

For $a,b\in \TT^1$, we define the oriented distance $d(a,b)$ as follows. Let $x,y\in \RR$ 
 be any representatives of $a$ and $b$ such that $x \le y < x+1$, and define $d(a,b) = y-x$. Note that $d$ is not symmetric, and in fact, for all $a,b\in\TT^1$  we have
 \begin{equation}\label{eq:sum_is_1}
     d(a,b) + d(b,a) =1.
 \end{equation} 
Given $x=(x_1, x_2,\ldots,x_n)\in \TT^n$, the open cube based at $x$ of side-length $\epsilon$ is 
\[ 
x + (0,\epsilon)^n = \{y\in\TT^n:\ 0 < d(x_i,y_i) < \epsilon \text{ for all } 1\le i\le n \}.
\]
The covering number of $\TT^n$ by $(0, \epsilon)^n$ is defined as the minimal number of translates of $(0, \epsilon)^n$ whose union contains $\TT^n$, that is 
\[
N(\TT^n, (0, \epsilon)^n):=\min\left\{N\in\NN\,:\,\exists x^1,\dots x^N\in\TT^n
,\;\TT^n = \bigcup_{i=1}^N\left(x^i+(0,\epsilon)^n\right)\right\}.
\]
Similarly the fractional covering number of $\TT^n$ by $(0, \epsilon)^n$ is defined as
\[
N^*(\TT^n,(0, \epsilon)^n):=\inf\left\{\mu(\TT^n)\,:\,\forall\,x\in\TT^n,\,\mu(x-(0, \epsilon)^n)\ge1 \right\}
\]
where the infimum is taken over all non-negative Borel measures $\mu$ on $\TT^n$. 
\begin{proposition}\label{prop:torus_polydisc_ill}
Let $n\ge2$. We have $\ill(D^n) = N(\TT^n, (0,\frac{1}{2})^n)$ and   $\ill^*(D^n) = N^*(\TT^n, (0,\frac{1}{2})^n)$.
\end{proposition}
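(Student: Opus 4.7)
The plan is to establish a clean dictionary between illuminating directions in $\CC^n$ and points of $\TT^n$, under which illumination of the extreme points of $D^n$ corresponds precisely to covering $\TT^n$ by translates of $(0,\frac{1}{2})^n$.

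First I would invoke Lemma \ref{lem:extreme} to reduce the classical problem to illuminating only the extremal set $D_0^n=(\partial D)^n$, which we parameterize by $\TT^n$ via $\theta\mapsto (e^{2\pi i\theta_1},\ldots,e^{2\pi i\theta_n})$. The same reduction works in the fractional case: if $\mu$ illuminates every extreme point and $z\in\partial D^n$ is arbitrary, then any extreme point $y$ on the face of $z$ satisfies $N_z(D^n)\subseteq N_y(D^n)$, so the set of directions illuminating $z$ contains that illuminating $y$ and hence has $\mu$-mass at least $1$. Writing $v_j=r_j e^{2\pi i\phi_j}$ and $z_j=e^{2\pi i\theta_j}$, the expansion
\[|z_j+tv_j|^2 = 1 + 2tr_j\cos\bigl(2\pi(\phi_j-\theta_j)\bigr) + t^2 r_j^2\]
shows that $v$ illuminates $z$ iff $r_j>0$ and $\phi_j-\theta_j\in(\tfrac{1}{4},\tfrac{3}{4})\pmod 1$ for every $j$. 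In particular, any $v$ with a zero coordinate illuminates no point of $D_0^n$ and can be discarded.

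Second, I would introduce the map $\sigma\colon(\CC\setminus\{0\})^n\to\TT^n$ defined by $\sigma(v)_j = \arg(v_j)/(2\pi) + \frac{1}{4}\pmod 1$. The previous computation then rewrites cleanly as: $v$ illuminates $e^{2\pi i\theta}$ if and only if $\theta\in\sigma(v)+(0,\tfrac{1}{2})^n$ in $\TT^n$, equivalently $\sigma(v)\in\theta-(0,\tfrac{1}{2})^n$. This is the key translation, and the rest is a matter of pushing sets and measures back and forth. Any $x\in\TT^n$ is hit by $\sigma$, for instance by the unit vector $v(x)=\bigl(e^{2\pi i(x_j-1/4)}\bigr)_{j=1}^n$, so $\sigma$ is surjective.

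For the classical equality, a set $V$ of directions illuminates $D^n$ iff (after discarding useless vectors) $\sigma(V)$ covers $\TT^n$ by translates of $(0,\tfrac{1}{2})^n$, while any covering set $X\subseteq\TT^n$ lifts to $\{v(x):x\in X\}$ of the same size; this yields $\ill(D^n)=N(\TT^n,(0,\tfrac{1}{2})^n)$. For the fractional equality, given a measure $\mu$ illuminating $D^n$, the push-forward $\nu:=\sigma_*\bigl(\mu|_{(\CC\setminus\{0\})^n}\bigr)$ on $\TT^n$ has $\nu(\TT^n)\le\mu(\CC^n)$ and, for every $\theta\in\TT^n$,
\[\nu\bigl(\theta-(0,\tfrac{1}{2})^n\bigr) = \mu\bigl(\sigma^{-1}(\theta-(0,\tfrac{1}{2})^n)\bigr) \ge 1;\]
conversely a fractional covering measure $\nu$ on $\TT^n$ lifts via $x\mapsto v(x)$ to a measure on $\CC^n$ of equal mass that illuminates $D^n$, giving $\ill^*(D^n)=N^*(\TT^n,(0,\tfrac{1}{2})^n)$.

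There is no deep obstacle: the argument is essentially bookkeeping. The only points requiring care are tracking the orientation conventions in the definitions of $d$, $x+(0,\epsilon)^n$, and $x-(0,\epsilon)^n$ so that the shift by $\tfrac14$ lands on the correct translate, and, in the fractional case, verifying that the "bad" set $\{v:\exists j,\,v_j=0\}$ may be harmlessly removed before pushing forward.
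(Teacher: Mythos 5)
Your proposal is correct and follows essentially the same route as the paper: reduce via Lemma \ref{lem:extreme} to the extreme points $D_0^n$, translate illumination of $(\partial D)^n$ into membership of the angle vector in a shifted open cube of side $\tfrac12$ on $\TT^n$, and transport illuminating sets and measures in both directions. The only differences are cosmetic but welcome: you fold the normalization $v\mapsto(v_j/|v_j|)_j$ and the quarter-turn shift into the single map $\sigma$, verify the half-circle condition by the explicit expansion of $|z_j+tv_j|^2$, and spell out the fractional (measure) version of the extreme-point reduction, which the paper invokes only implicitly.
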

\begin{proof}[Proof of Proposition   \ref{prop:torus_polydisc_ill}]
Our goal is to establish that the illumination problem can be solved by illuminating points on $D_0^n$ by illuminating directions which are also on $D_0^n$. We already know from \ref{lem:extreme} that it is sufficient to illuminate $D_0^n$, so we are to show the same for the illuminating directions.
 
Observe that due to the product structure of the polydisc,  a non-empty subset $A\subset D_0^n$ is illuminated by a vector $v=(v_1,\dots, v_n)\in \CC^{n}$  if and only if  $v_i \ne 0$ for all $i$ and $A$ is illuminated by the extremal vector $f(v):=(v_1/|v_1|,\dots,v_n/|v_n|)\in D_0^n$. Indeed, by definition, $x=(x_1,\dots, x_n)\in A$ is illuminated by $v$ if $x+tv\in\Int(D^n)=\Int(D^1)\times\dots\times\Int(D^1)$ for some small enough $t>0$. Clearly, this occurs if and only if $x_i+tv_i\in\Int(D^1)$ for each $i\in\{1,\dots,n\}$ or, in other words, each component $x_i$ on the boundary of the corresponding $i^{\rm th}$ disc $D^1$ is illuminated by the vector $v_i$, independently of other components (which, in particular, means that $v_i\neq 0$). 

The above observation implies that if a measure $\mu$ illuminates $D^n$, we may assume $\mu$ supported on $\{v\in \CC^n\,:\,v_i\ne 0\,\, \forall i\in\{1,\ldots,n\}\}$. Moreover, the  pushforward of $\mu$ by $f$, namely the measure $\mu\circ f^{-1}$ on $D_0^n$, is an illumination measure of $D^n$ whose total mass is the same as $\mu$'s. 

Having it clear know that it is sufficient to illuminate $D_0^n$ by vectors of $D_0^n$, we continue denoting by $Y_v$ the set of points in $D_0^n$ which are illuminated by $v\in D_0^n$. We conclude that 
\begin{equation}\label{eq:ill_by_torus}
\ill(D^n)=\min\left\{N\in\NN\,:\,\exists\, v^1\dots,v^N\in D_0^n,\,\,\,\bigcup_{i=1}^N Y_{v^i}=D_0^n\right\}
\end{equation}
and
\begin{equation}\label{eq:ill*_by_torus}
\ill^*(D^n)=\inf\{\mu(D_0^n)\,:\,\mu\ge0,\,\,\mu(\{v \in D_0^n\,:\,\ x\in Y_v\})\ge1\,\,\,\forall x\in D_0^n\}. 
\end{equation}
Note that $Y_v$ is  given by 
\begin{equation}\label{eq:open_cube}
Y_v=\left\{x=(x_1,\dots,x_n)\in D_0^n\,:\,\, \iprod{x_i}{v_i}<0
\,\,\,\,\forall\,\, i\in\{1,\dots,n\}\right\}.
\end{equation}
Indeed,  as described above, illuminating boundary points of $D_0^n$ by $v\in D_0^n$ is equivalent to simultaneously illuminating  boundary points on the unit disc $D^1$ by $v_1,\dots,v_n\in S^1$. Since each direction $v_i\in S^1$ clearly illuminates the open half circle centered at $-v_i$, we obtain \eqref{eq:open_cube}.

Finally, consider the  bijection $f:D_0^n\to\TT^n$ mapping each  $v=(e^{2\pi\theta_1 i},\dots,e^{2\pi\theta_n i})$, where $\theta_1\dots,\theta_n\in [0,1)$, to $(\theta_1,\dots,\theta_n)$. It is easy to verify that $f(Y_{-v})$ is an open cube of side-length $\frac12$  which is  based at $(\theta_1,\dots,\theta_n)-(\frac14,\dots,\frac14)$. Therefore, it follows by \eqref{eq:open_cube} that a  point $v\in D_0^n$ illuminates $x\in D_0^n$ if and only if the open cube $f(Y_{-v})$ of side-length $\frac12$ contains $f(x)$. Consequently, \eqref{eq:ill_by_torus} implies that $\ill(D^n)=N(\TT^n,(0,\frac12)^n)$ and \eqref{eq:ill*_by_torus} implies that $\ill^*(D^n)=N^*(\TT^n,(0,\frac12)^n)$, as claimed. 
\end{proof}

\subsection{Covering the torus by open cubes}\label{sec:torus}

In light of Proposition   \ref{prop:torus_polydisc_ill}  we are interested in computing  $N(\TT^n,(0,\frac{1}{2})^n)$ and 
$N^*(\TT^n,(0,\frac{1}{2})^n)$. We will actually compute the more general $N(\TT^n,(0,\frac{1}{m})^n)$ for $m \in \NN$ and 
$N^*(\TT^n,(0,\epsilon)^n)$ for all $0 < \epsilon < 1$. In Section \ref{othervalues} we will discuss the more general problem of finding $N(\TT^n,(0,\epsilon)^n)$ and explain its relation to illumination.

The problem of covering $\TT^n$ with \emph{closed} cubes was studied in \cite{MCELIECE1973119} and in \cite{Bogdanov22} where various bounds on these covering numbers were established.  Exact values were computed in  \cite{MCELIECE1973119} for dimension $2$, and in \cite{Bogdanov22} for dimension $3$. In particular, their results imply that  $N(\TT^2,(0,\frac{1}{2})^2)=7$ and  $N(\TT^3,(0,\frac{1}{2})^3)=15$.

We begin by establishing a lower  bound for the classical covering number $N(\TT^n, (0,\frac{1}{m})^n)$. For $m=2$
this was already proved in \cite{BoltyanskiMartini} in the equivalent language of illumination numbers, and 
their proof is similar to our own:

\begin{proposition}
\label{prop:lowerboundm}
For every $m \in \NN,m \ge 2$ we have $$N(\TT^n,(0,\frac{1}{m})^n)\ge \frac{m^{n+1}-1}{m-1}=m^n+m^{n-1}+\cdots+1.$$  
\end{proposition}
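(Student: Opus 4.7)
The plan is to prove the bound by induction on $n$, with the inductive step driven by a one-dimensional lemma about multiplicities of open-arc covers of $\TT$.

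Specifically, I would first establish the following lemma: if $I_1,\dots,I_N$ are open arcs of length $1/m$ in $\TT$ such that every point of $\TT$ is contained in at least $k$ of them (for some integer $k\ge 1$), then $N\ge mk+1$. To prove it, consider the multiplicity function $f(y)=\#\{i:y\in I_i\}$. By Fubini $\int_\TT f\,dy = N/m$, while by hypothesis $f\ge k$ pointwise, which already gives $N\ge mk$. Suppose for contradiction that $N=mk$; then $\int f = k$ and $f\ge k$ force $f=k$ almost everywhere. Since $f$ is piecewise constant with jumps only at the finitely many endpoints $a_i$ and $a_i+1/m$, its constant value on each open piece between consecutive jump points must be exactly $k$. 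Now pick any left endpoint $a_{i_0}$: the arc $I_{i_0}=(a_{i_0},a_{i_0}+1/m)$ is open, so $a_{i_0}\notin I_{i_0}$, whereas every point just to the right of $a_{i_0}$ does lie in $I_{i_0}$. Comparing, $f(a_{i_0})\le k-1$, contradicting $f\ge k$ at the point $a_{i_0}$. Hence $N\ge mk+1$.

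With the lemma in hand, the induction is straightforward. The base case $n=1$ is just the lemma applied with $k=1$, giving $N(\TT,(0,1/m))\ge m+1=\frac{m^2-1}{m-1}$. For the inductive step, assume the bound holds in dimension $n-1$ and let $C_1,\dots,C_N$ be a cover of $\TT^n$ by open cubes of side $1/m$, say $C_i=x^i+(0,1/m)^n$. Project each cube onto its last coordinate, obtaining $N$ open arcs of length $1/m$ in $\TT$. For any fixed $y\in\TT$, the cubes whose projection contains $y$ slice $\TT^{n-1}\times\{y\}\cong\TT^{n-1}$ by open $(0,1/m)^{n-1}$-cubes that must cover the entire slice, so by the inductive hypothesis at least $K_{n-1}:=1+m+\cdots+m^{n-1}$ of them are needed. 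Thus the one-dimensional projection cover has multiplicity at least $K_{n-1}$ at every $y$, and applying the lemma with $k=K_{n-1}$ gives $N\ge mK_{n-1}+1=1+m+\cdots+m^n$, as required.

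The main obstacle is extracting the sharp ``$+1$'' in the lemma. A pure volume/integration argument only yields $N\ge mk$; the extra unit must come from the openness of the arcs at their left endpoints, generalizing the standard fact that $m$ open arcs of length $1/m$ cannot cover $\TT$. Once the lemma is proved in this sharp form, the induction on $n$ is essentially automatic.
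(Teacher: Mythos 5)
Your proof is correct, but the mechanism behind the inductive step is genuinely different from the one in the paper. The paper also proves the recursion $N(\TT^n,(0,\tfrac1m)^n)\ge m\,N(\TT^{n-1},(0,\tfrac1m)^{n-1})+1$, but it does so combinatorially: it fixes one cube $C_0=x+(0,\tfrac1m)^n$ of the cover, looks only at the $m$ slices $\{x_1+\tfrac{k}{m}\}\times\TT^{n-1}$, notes that each slice needs $N(\TT^{n-1},(0,\tfrac1m)^{n-1})$ cubes, that no cube can meet two of these slices, and that $C_0$ meets none of them (openness at the left endpoint), which yields the ``$+1$''. You instead prove a one-dimensional lemma -- a multiplicity-$k$ cover of $\TT$ by open arcs of length $\tfrac1m$ has size at least $mk+1$ -- by integrating the multiplicity function (giving $N\ge mk$) and then using the left-endpoint/openness argument to exclude equality; projecting the cubes onto one coordinate and invoking the inductive hypothesis on every slice gives the multiplicity bound. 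Both arguments ultimately exploit the same openness phenomenon, but yours averages over all slices rather than selecting $m$ of them. A concrete payoff of your route: the lemma generalizes verbatim to arcs of arbitrary length $\epsilon$, giving $N\ge\lfloor k/\epsilon\rfloor+1$, which would prove the paper's more general lower bound for $N(\TT^n,(0,\epsilon)^n)$ (Proposition \ref{lowerboundG}) directly, without passing through the closed-cube results of \cite{MCELIECE1973119} and the limiting Lemma \ref{closecovertorus}; the paper's argument, on the other hand, is slightly shorter and purely combinatorial in the $\epsilon=\tfrac1m$ case.
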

\begin{proof}
Define $a_n = N(\TT^n,(0,\frac{1}{m})^n)$. We define $\TT^0$ to be a single point, and therefore set $a_0 = 1$. By induction it is enough to show that $a_n \ge m a_{n-1}+1$ for all $n\ge1$.

Let $V$ be a set of $a_n$ translations of the cube $(0,\frac{1}{m})^n$ which cover $\TT^n$. Let $C_0 = x + (0,\frac{1}{m})^n$ be an arbitrary element of $V$. Define $U = \{x_1 + \frac{k}{m}\ :\ 1 \le k \le m\} \subseteq \TT^1$. Finally, for every $u\in U$ define $\TT_u = \{u\}\times \TT^{n-1}$ and $V_u = \{ C \in V\ :\ C \cap \TT_u \ne \emptyset \}$.

For every $u \in U$ the set $\{ C\cap \TT_u\ :\ C \in V_u \}$ is a set of $(n-1)$-dimensional cubes of side length $\frac{1}{m}$ which covers $\TT_u$. Since $\TT_u$ is an $(n-1)$-dimensional torus, we have by definition $|V_u| \ge a_{n-1}$.

For every $u, u' \in U$ we have $d(u_1, u'_1) \ge \frac{1}{m}$. Therefore no cube in $V$ can intersect both $\TT_u$ and $\TT_{u'}$, i.e. $V_u \cap V_{u'} = \emptyset$. Moreover, by construction $C_0 \notin V_u$ for any $u\in U$. Therefore 
\[
a_n = |V| \ge \left|\{C_0\} \cup \bigcup_{u \in U} V_u \right| = 1 + \sum_{u\in U} |V_u| \ge 1 + ma_{n-1},
\]
as claimed. 
\end{proof}

\begin{remark} Proposition   \ref{prop:lowerboundm} can be extended to cubes of arbitrary side length $\epsilon > 0$. The proof in the case $\epsilon = \frac{1}{m}$ is simpler, so we postpone the discussion of the general case to Section \ref{othervalues}.
\end{remark}

For the matching upper bound we first need two simple facts regarding the distance on $\TT^1$:
\begin{fact}\label{fact:cyclic}
Every finite set $A \subseteq \TT^1$ may be put in \emph{cyclic order}, i.e., one can write $A=\{a_1, a_2, \ldots, a_k\}$ so that for every $1\le i < j \le k$,
\[ d(a_i, a_j) = \sum_{\ell = i}^{j-1} d(a_\ell, a_{\ell+1}).\]
\end{fact}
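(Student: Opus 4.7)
The plan is to prove this by lifting the points of $A$ to the real line and sorting them. Specifically, I would start by picking an arbitrary element $a \in A$ and fixing a representative $x \in \RR$ for $a$. For every other element $b \in A$, there is a unique representative $y_b \in [x, x+1)$. I would then sort the points $\{y_b : b \in A\}$ in increasing order to obtain $x = y_1 < y_2 < \cdots < y_k < x+1$, and relabel the elements of $A$ as $a_1, a_2, \ldots, a_k$ so that $a_\ell$ is the element with lift $y_\ell$.

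With this enumeration the verification is direct. For any $1 \le i < j \le k$ we have $y_i \le y_j < y_i + 1$, so by the very definition of the oriented distance, $d(a_i, a_j) = y_j - y_i$. In particular $d(a_\ell, a_{\ell+1}) = y_{\ell+1} - y_\ell$ for each $\ell$, and telescoping gives
\[
\sum_{\ell=i}^{j-1} d(a_\ell, a_{\ell+1}) = \sum_{\ell=i}^{j-1}(y_{\ell+1} - y_\ell) = y_j - y_i = d(a_i, a_j),
\]
which is precisely the claimed identity.

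There is no real obstacle here; the only thing to be careful about is to choose all lifts inside a common half-open interval of length $1$ so that the oriented distances between ordered points are exactly the differences of lifts (rather than wrapping around the torus). Choosing the interval to begin at a lift of an actual element of $A$ ensures this works uniformly. The word ``cyclic'' in the statement simply records that the resulting ordering is determined up to a cyclic shift (corresponding to the choice of starting element $a$), and the displayed identity is preserved under such shifts.
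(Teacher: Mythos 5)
Your proof is correct and follows essentially the same route as the paper: lift the points to a common half-open interval of length $1$, sort the lifts, and observe that the oriented distance between ordered points equals the difference of lifts, so the identity telescopes. The only (immaterial) difference is that you anchor the interval at a lift of an element of $A$, whereas the paper uses representatives in $[0,1)$.
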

\begin{proof}
    For each $a \in \TT^1$ let $x(a)$ be its representative in $[0,1)$.  
    Arrange the elements of $A$ such that $x(a_1) < x(a_2) < \cdots x(a_n)$. Since $d(a_i, a_j) = x(a_j) - x(a_i)$ for all $i<j$, the claim follows. 
\end{proof}
Note that by \eqref{eq:sum_is_1}, if we define $a_{k+1} = a_1$ then $\sum_{\ell=1}^k d(a_\ell, a_{\ell +1}) = 1$. Also note that the cyclic order is indeed cyclic, i.e., if $a_1, a_2, \ldots, a_k$ is a cyclic order of $A$ so is $a_2,a_3,\ldots,a_k,a_1$.

\begin{fact}\label{fact:scaling}
If $a,b \in \TT^1$ and $d(a,b) < \frac{1}{k}$ for some  $k \in \NN$, then $d(ka, kb) = k \cdot d(a,b)$.
\end{fact}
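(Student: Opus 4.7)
The plan is to work directly with real representatives and reduce the statement to a one-line inequality. Given $a, b \in \TT^1$ with $d(a,b) < \tfrac{1}{k}$, I would choose representatives $x, y \in \RR$ of $a$ and $b$ respectively such that $x \le y < x+1$, so that by definition $d(a,b) = y - x$. The hypothesis then says $0 \le y - x < \tfrac{1}{k}$.

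Next, I would observe that $kx$ and $ky$ are representatives in $\RR$ of $ka$ and $kb$ respectively (this uses only that the projection $\RR \to \TT^1$ is a group homomorphism and that $k \in \NN$). From $x \le y$ we get $kx \le ky$, and from $y - x < \tfrac{1}{k}$ we get
\[ ky - kx = k(y-x) < k \cdot \tfrac{1}{k} = 1, \]
so $kx \le ky < kx + 1$. This is exactly the condition required by the definition of the oriented distance, hence $d(ka, kb) = ky - kx = k(y-x) = k \cdot d(a,b)$, as claimed.

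There is no real obstacle here; the only subtle point is keeping track of the fact that the oriented distance is defined using the \emph{specific} representatives satisfying $x \le y < x+1$, so one must verify that multiplying by $k$ preserves this normalization, which is precisely where the hypothesis $d(a,b) < \tfrac{1}{k}$ is used.
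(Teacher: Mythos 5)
Your proof is correct and follows essentially the same route as the paper: pick representatives $x \le y$ with $d(a,b)=y-x<\tfrac{1}{k}$, multiply by $k$, and check that the normalization $kx \le ky < kx+1$ is preserved so the definition of the oriented distance applies directly.
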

\begin{proof}
Let $x$ and $y$ be representatives of $a$ and $b$ such that $d(a,b)=y-x < \frac{1}{k}$. Therefore $kx\le ky < kx+1$ so,  by definition,  $d(ka,kb)=ky-kx=k\cdot d(a,b)$.    
\end{proof}

We can now prove:
\begin{proposition}
\label{prop:upperboundm}
For every $\epsilon > 0$ and $m \ge 2$ we have $N(\TT^n,(0,\frac{1}{m})^n) \le \frac{m^{n+1}-1}{m-1}$.   
\end{proposition}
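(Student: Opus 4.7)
The plan is to construct an explicit covering of $\TT^n$ of the required size using the lattice approach of \cite{MCELIECE1973119}. Set $N := \frac{m^{n+1}-1}{m-1}$ and $v := \frac{1}{N}(1, m, m^2, \ldots, m^{n-1}) \in \TT^n$, and take
\[
\mathcal{C} := \left\{ kv + (0, 1/m)^n : k = 0, 1, \ldots, N-1 \right\}.
\]
That $\mathcal{C}$ consists of $N$ distinct cubes amounts to showing that $v$ has order exactly $N$ in $\TT^n$. This follows because the identity $N(m-1) = m^{n+1}-1$ yields $m^{n+1} \equiv 1 \pmod N$, $m$ is coprime to $N$ (as $N \equiv 1 \pmod m$), and $m^j \le m^n < N$ for $1 \le j \le n$ implies $m^j \not\equiv 1 \pmod N$ for such $j$, so $m$ has multiplicative order exactly $n+1$ modulo $N$.

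To verify that $\mathcal{C}$ covers $\TT^n$, my plan is to induct on $n$. The base case $n = 1$ is immediate: the $m+1$ equally spaced lattice points $\{k/(m+1)\}$ have cyclic consecutive distances all equal to $\frac{1}{m+1} < \frac{1}{m}$, so Fact~\ref{fact:cyclic} shows that every point of $\TT^1$ is covered by the open interval of length $\frac{1}{m}$ based at its cyclic predecessor. For the inductive step, I would fix $y\in\TT^n$ and examine the first-coordinate constraint $d(k/N, y_1) \in (0, 1/m)$. Since the points $\{k/N\}$ are equally spaced at $\frac{1}{N} < \frac{1}{m}$ along $\TT^1$, this restricts $k$ to a cyclic arithmetic progression $K_1 \subseteq \ZZ/N$ of consecutive integers of length at least $\lfloor N/m \rfloor = N_{n-1} := \frac{m^n-1}{m-1}$ (using $N = m N_{n-1} + 1$).

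It then remains to show that some $k \in K_1$ also satisfies the remaining constraints $d(km^{j-1}/N, y_j) \in (0, 1/m)$ for $j = 2, \ldots, n$. Applying Fact~\ref{fact:scaling} with scale factor $m^{j-1}$ converts each such constraint into one on the induced sub-lattice $\{kv' : k \in K_1\} \subseteq \TT^{n-1}$, where $v' := \frac{1}{N}(m, m^2, \ldots, m^{n-1})$. After translating the starting index of $K_1$ to zero (which amounts to translating $y$), this becomes the claim that $\{rv' : r = 0, 1, \ldots, N_{n-1}-1\}$ covers $\TT^{n-1}$ by open cubes of side $\frac{1}{m}$.

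The main obstacle is closing the induction cleanly, because $v' = \frac{N-1}{N}\,v^{(n-1)}$ is a rescaling of the canonical inductive generator $v^{(n-1)} := \frac{1}{N_{n-1}}(1, m, \ldots, m^{n-2})$, so the inductive hypothesis does not apply verbatim. I would handle this by strengthening the inductive statement to allow arbitrary cyclic arithmetic progressions of $N_{n-1}$ consecutive points along any ``pattern line'' $\frac{1}{M}(1, m, \ldots, m^{n-2})$ with $M \ge N_{n-1}$ and $\gcd(m, M) = 1$. The proof of this stronger statement proceeds by the same inductive argument, relying on translation invariance to normalise the starting index and on the coordinate-wise spacing $\frac{m^{j-1}}{N} < \frac{1}{m}$ (for $j \le n-1$) to keep Fact~\ref{fact:cyclic} applicable at each level, ultimately terminating at the base case.
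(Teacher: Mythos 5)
Your construction is exactly the one used in the paper: the base points $kv$ with $v=\frac{1}{N}(1,m,m^2,\ldots,m^{n-1})$ and $N=\frac{m^{n+1}-1}{m-1}$ are precisely the points $(u,mu,\ldots,m^{n-1}u)$, $u\in\{k/N\}$, of the paper's proof, and your overall strategy (select candidate cubes coordinate by coordinate) is also the same. The discussion of the order of $v$ in $\TT^n$ is harmless but unnecessary: for an upper bound one does not need the $N$ cubes to be distinct, and in any case the first coordinate alone shows the order is $N$.

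The gap is in the verification that these cubes cover $\TT^n$; the induction you sketch does not close. After imposing the first-coordinate constraint you are left not with a full equally spaced lattice in the remaining coordinates, but with the points $kv'$, $v'=\frac{1}{N}(m,m^2,\ldots,m^{n-1})$, for $k$ in a block of consecutive integers: in the next coordinate these form an arc of points spaced $\frac{m}{N}$ apart whose span is only $1-\frac{m+1}{N}$ of the circle. When the next window of length $\frac{1}{m}$ straddles the complementary gap, the surviving indices are a prefix plus a suffix of the block rather than a consecutive run, and the naive count can fall below $N_{n-2}$; so the statement you would need to feed back into the induction (a consecutive block of the required length) is not what the step actually produces. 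The proposed strengthening does not repair this: covering $\TT^{n-1}$ by $N_{n-1}$ consecutive multiples of $\frac{1}{M}(1,m,\ldots,m^{n-2})$ for \emph{arbitrary} $M\ge N_{n-1}$ is false (for large $M$ these points span a short arc and already miss most of $\TT^{n-1}$ in the first coordinate, so an upper bound on the spacing is essential), and in the actual application the spacing is $\frac{m}{N}$, i.e.\ $M=N/m$ is not an integer, so the condition $\gcd(m,M)=1$ does not even apply. This wrap-around phenomenon is exactly the delicate point on which the Boltyanski--Martini argument breaks down (see Remark \ref{rem:BMcomparison}). The paper circumvents it by not taking all lattice points inside the window: at each step it selects the $b_k$ points immediately preceding $x_k$ in the cyclic order of Fact \ref{fact:cyclic} and propagates a quantitative pairwise-separation bound $\delta_k=m^k\frac{m-1}{m^{n+1}-1}$ (property (P3)), and the verification of this bound for the wrap-around pair, via Fact \ref{fact:scaling}, is the heart of the proof. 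Without an analogue of that estimate, your inductive step as written does not go through.
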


\begin{remark} \label{rem:BMcomparison}
As stated in the introduction, Proposition \ref{prop:upperboundm} was previous stated by Boltyanski and Martini in 
\cite{BoltyanskiMartini}, for $m=2$ and in the equivalent language of illumination numbers. Unfortunately, it appears
that their proof contains a delicate error. In our language, the authors state in Example 3 that 
$N(\TT^2, (0,\frac{1}{2})^2) \le 7$ since  $\TT^2 = \bigcup_{k=0}^6 (x^k + (0,\frac{1}{2})^2)$, 
where $x^k = \left( \frac{k}{3}, \frac{k}{7} \right) \in \TT^2$. This is not true, as e.g. the point 
$(\frac{5}{6}, \frac{3}{14}) \in \TT^2$ is not covered by any of these cubes.

The issue in the proof is that the authors claim that the set 
$\{ x^k_1, x^{k+1}_1, x^{k+2}_1\} \subseteq \TT$ contains $3$ equally spaced points for every $0\le k \le 6$, where the vectors are 
taken cyclically. However
$\{ x^6_1, x^{7}_1, x^{8}_1\} = \{ x^6_1, x^0_1, x^1_1\} = \{0,0,\frac{1}{3}\}$, so the proof breaks down. In Theorem 4
of \cite{BoltyanskiMartini} the authors attempt to prove the result for general $n$ by induction on $n$, but the 
same issue affects not only the base case $n=2$ but also the induction step.

In our proof we also choose an explicit set of cubes, just like in \cite{BoltyanskiMartini}, but our set is different.
Our choice does not seem to allow a simple inductive proof, and instead we require a very careful analysis.
\end{remark}

\begin{proof}[Proof of Proposition \ref{prop:upperboundm}.] For $0\le k\le n$, set $b_{k}=\frac{m^{n+1-k}-1}{m-1}$. Define  
$U_{0}=\left\{ \frac{k}{b_{0}}\,:\,\ 1\le k\le b_{0}\right\}$,
and consider the set of points 
\[
V=\left\{ (u,mu,m^{2}u,\ldots.m^{n-1}u)\,:\, u\in U_{0}\right\} \subseteq\TT^{n}.
\]
As $\left|V\right|=\frac{m^{n+1}-1}{m-1}$, it is enough to show that the set of cubes $\left\{ v+(0,\frac{1}{m})^{n}\,:\,\ v\in V\right\} $ covers $\TT^{n}$. 

Towards this goal, fix $x\in\TT^{n}$. We will find a cube that covers $x$, working coordinate by coordinate: First we find a set 
$U_1\subseteq U_0$ of size $|U_1| \ge b_1$ such that $x_1\in u+\left(0,\frac{1}{m}\right)$ for every $u\in U_1$. Then, we find a set 
$U_2\subseteq U_1$ of size $|U_2| \ge b_2$ such that $x_2\in mu+\left(0,\frac{1}{m}\right)$ for every $u\in U_2$, and so on. After the last
iteration we will have at least one candidate $u\in U_0$ such that $x_k\in m^ku+\left(0,\frac{1}{m}\right)$ for all $1\le k\le n$, which 
means $x\in v+{\left(0,\frac{1}{m}\right)}^{n}$ for the corresponding $v\in V$.

To formally carry out this scheme, we inductively define a sequence of sets $U_{0}\supseteq U_{1}\supseteq\cdots\supseteq U_{n}$ such that for all $1\le k\le n$ we have:
\begin{enumerate}[labelindent=\parindent,leftmargin={*},label=(P\arabic*),align=left]
\item\label{enu:set-size}$\left|U_{k}\right|=b_{k}$.
\item\label{enu:set-bound}For all $u\in U_{k}$ we have $d\left(m^{k-1}u,x_{k}\right)<\frac{1}{m}$.
\item\label{enu:set-separation}For all $u\ne u'$ in $U_{k}$ we have
$d(m^{k}u,m^{k}u')\ge\delta_{k}$, where $\delta_{k}=m^{k}\frac{m-1}{m^{n+1}-1}$. 
\end{enumerate}
Note that properties \ref{enu:set-size} and \ref{enu:set-separation} hold for $k=0$: to see property \ref{enu:set-separation} simply observe that the distance between different elements of $U_{0}$ is at least $\frac{1}{b_{0}}=m\delta_{0}>\delta_{0}$. To describe the inductive step, assume we have already constructed $U_{k-1}$. Consider
the set $m^{k-1}U_{k-1}\cup\left\{ x_{k}\right\} \subseteq\TT^{1}$ and arrange it in cyclic order according to Fact \ref{fact:cyclic}, say 
\[
m^{k-1}u_{1},m^{k-1}u_{2},m^{k-1}u_{3},\ldots,m^{k-1}u_{b_{k-1}},x_{k}
\]
 (recall that every cyclic permutation of the cyclic order is still cyclic, so we may assume for ease of notation that $x_{k}$ is ``last'' in the list). We now simply choose $U_{k}=\left\{ u_{r},u_{r+1},u_{r+2}\ldots,u_{b_{k-1}}\right\} $, where $r=b_{k-1}-b_{k}+1$. 

Clearly $\left|U_{k}\right|=b_{k-1}-r+1=b_{k}$. For the second property, we fix $r\le i\le b_{k-1}$ and use the cyclic order to compute
\begin{align*}
d\left(m^{k-1}u_{i},x_{k}\right) & =1-d\left(x_{k},m^{k-1}u_{i}\right)=1-\left(d(x_{k},m^{k-1}u_{1})+\sum_{j=1}^{i-1}d(m^{k-1}u_{j},m^{k-1}u_{j+1})\right)\\
 & \le1-\sum_{j=1}^{i-1}d(m^{k-1}u_{j},m^{k-1}u_{j+1})\le1-i\delta_{k-1}\le1-r\delta_{k-1}.
\end{align*}
 A direct computation shows that  
\[
(1-r\delta_{k-1})-\frac{1}{m}=-\frac{(m^{k}+1)(m-1)}{m\left(m^{n+1}-1\right)}<0,
\]
so we indeed have $d\left(m^{k-1}u,x_{k}\right)\le1-r\delta_{k+1}<\frac{1}{m}$ for all $u\in U_{k}$, which is \ref{enu:set-bound}. 

For the third and final property of $U_{k}$, note that since for all $r\le i,j\le a_{k-1}$ we have 
\[
d\left(m^{k-1}u_{i},m^{k-1}u_{j}\right)\le d\left(m^{k-1}u_{r},x_{k}\right)<\frac{1}{m},
\]
 it follows from Fact \ref{fact:scaling} that $d\left(m^{k}u_{i},m^{k}u_{j}\right)=m\cdot d\left(m^{k-1}u_{i},m^{k-1}u_{j}\right)$.
In particular, the elements
\[
m^{k}u_{r},m^{k}u_{r+1},m^{k}u_{r+2}\ldots,m^{k}u_{b_{k-1}}
\]
 are still given in cyclic order. Clearly in order to lower bound the distance between all pairs it is enough to bound the distance between (cyclically) consecutive pairs.  For all $r\le i\le b_{k-1}-1$
we have 
\[
d\left(m^{k}u_{i},m^{k}u_{i+1}\right)=m\cdot d\left(m^{k-1}u_{i},m^{k-1}u_{i+1}\right)\ge m\delta_{k-1}=\delta_{k},
\]
so all that remains is to bound $d\left(m^{k}u_{b_{k-1},}m^{k}u_{r}\right)$.
Towards this goal we again compute using the cyclic order:
\begin{align*}
d\left(m^{k}u_{r},m^{k}u_{b_{k-1}}\right) & =m\cdot d\left(m^{k-1}u_{r},m^{k-1}u_{b_{k-1}}\right)=m\left(1-d\left(m^{k-1}u_{b_{k-1}},m^{k-1}u_{r}\right)\right)\\
 & =m\left(1-d\left(m^{k-1}u_{b_{k-1}},m^{k-1}u_{1}\right)-\sum_{i=1}^{r-1}d\left(m^{k-1}u_{i},m^{k-1}u_{i+1}\right)\right)\\
 & \le m\left(1-\delta_{k-1}-\sum_{i=1}^{r-1}\delta_{k-1}\right)=m(1-r\delta_{k-1}).
\end{align*}
Hence 
\begin{align*}
d\left(m^{k}u_{b_{k-1},}m^{k}u_{r}\right) & =1-d\left(m^{k}u_{r},m^{k}u_{b_{k-1}}\right)\ge1-m\left(1-r\delta_{k-1}\right)\\
 & =(m^{k}+1)\cdot\frac{m-1}{m^{n+1}-1}>\delta_{k},
\end{align*}
which proves property \ref{enu:set-separation} and completes our inductive construction of the sets $\left\{ U_{k}\right\} _{k=0}^{n}$. 

Since $\left|U_{n}\right|=b_{n}=1$, we can write $U_{n}=\left\{ u\right\} $.
Then for all $1\le k\le n$ we have $u\in U_{k}$, and so $d\left(m^{k-1}u,x_{k}\right)<\frac{1}{m}$.
This exactly means that $x\in(u,mu,m^{2}u,\ldots.m^{n-1}u)+(0,\frac{1}{m})^{n}$, as claimed.
\end{proof}

We now turn our attention to the fractional case, which is much simpler.
\begin{proposition}\label{prop:fractionalcover}
For every $0<\epsilon<1$ we have $N^*(\TT^n, (0,\epsilon)^n) = \left(\frac{1}{\epsilon}\right)^n$.
\end{proposition}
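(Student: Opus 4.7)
The plan is to prove the two bounds separately, using a simple averaging argument. The key observation is that $\TT^n$ carries a canonical Haar probability measure, which makes the fractional problem on the torus essentially a linear-algebra exercise.

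For the upper bound, the natural candidate is $\mu = (1/\epsilon)^n \cdot \lambda_{\TT^n}$, where $\lambda_{\TT^n}$ denotes the Haar probability measure. For any $x \in \TT^n$, translation-invariance of $\lambda_{\TT^n}$ gives
\[
\mu\bigl(x - (0,\epsilon)^n\bigr) = (1/\epsilon)^n \cdot \lambda_{\TT^n}\bigl((0,\epsilon)^n\bigr) = (1/\epsilon)^n \cdot \epsilon^n = 1,
\]
so $\mu$ is admissible and has total mass $(1/\epsilon)^n$.

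For the matching lower bound, I would take any admissible measure $\mu$ and integrate the defining inequality $\mu(x - (0,\epsilon)^n) \ge 1$ against $\lambda_{\TT^n}$, then swap the order of integration. Namely,
\[
1 \le \int_{\TT^n} \mu\bigl(x - (0,\epsilon)^n\bigr) \, d\lambda_{\TT^n}(x) = \int_{\TT^n} \int_{\TT^n} \one_{(0,\epsilon)^n}(x - y) \, d\mu(y) \, d\lambda_{\TT^n}(x),
\]
and applying Fubini together with translation-invariance of $\lambda_{\TT^n}$ converts the inner integral into $\lambda_{\TT^n}((0,\epsilon)^n) = \epsilon^n$, independent of $y$. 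This yields $\epsilon^n \mu(\TT^n) \ge 1$, i.e.\ $\mu(\TT^n) \ge (1/\epsilon)^n$, completing the proof.

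There is essentially no obstacle here: both bounds are tight and come from the same identity $\lambda_{\TT^n}((0,\epsilon)^n) = \epsilon^n$, once applied as a probability statement (upper bound) and once as an averaging identity via Fubini (lower bound). The only mild point to verify is that Fubini applies, which is immediate since the indicator is measurable and $\mu \otimes \lambda_{\TT^n}$ is a finite product measure on a compact space.
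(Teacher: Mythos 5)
Your proof is correct and is essentially the argument given in the paper: the same uniform (Haar) measure scaled by $(1/\epsilon)^n$ for the upper bound, and the same averaging/Fubini step for the lower bound, which the paper phrases compactly as integrating the convolution inequality $\mu * \one_{(0,\epsilon)^n} \ge \one$ over the torus. No substantive difference.
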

\begin{proof}
For the upper bound, it is enough to note that the uniform measure on $\TT^n$ with total measure $\left(\frac{1}{\epsilon}\right)^n$ is a covering measure. 

For the lower bound we argue similarly to Proposition   2.10 of \cite{artsteinavidan2013weighted}. Let $\mu$ be a covering measure of $\TT^n$ by $(0,\epsilon)^n$. This means that $\mu * \one_{(0,\epsilon)^n} \ge \one$, where the convolution $*$ is with respect to the group structure on $\TT^n$. Integrating over the torus we see that $\mu(\TT^n) \epsilon^n \ge 1$. 
\end{proof}

The proof of Theorems \ref{polydiscfracill} and \ref{polydiscclassicill} is now immediate: 
\begin{proof}[Proof of Theorems \ref{polydiscfracill} and \ref{polydiscclassicill}]
The fact that $\Ill(D^n) = 2^{n+1}-1$ follows from Propositions \ref{prop:torus_polydisc_ill}, \ref{prop:lowerboundm}, and \ref{prop:upperboundm}. The fact that $\Ill^*(D^n) = 2^{n}$ follows from Propositions \ref{prop:torus_polydisc_ill} and \ref{prop:fractionalcover}. 
\end{proof}

\section{Illumination of Zonotopes}
\label{zonotope}
In this section we discuss the illumination problem for complex zonotopes and  prove Theorems \ref{zonotopeclassic} 
and  \ref{zonotopefrac}. For the real case, an equivalent theorem was proven in \cite{Martini87}, with another proof 
sketched in \cite{boltjanski92}. We begin by explaining the proof of \cite{boltjanski92}, presenting 
it in a slightly different way that will be convenient for the proof of the complex case. We then prove 
Theorems \ref{zonotopeclassic} and  \ref{zonotopefrac}. As we will see, the theorems are independent of each other and 
require essentially different ideas. 
\subsection{Real Zonotopes}
\label{sec:real_zonotopes}
Recall that a (real) zonotope $K$ is the Minkowski sum of finitely many segments in $\RR^n$. Up to a translation of $K$ we may 
assume that these segments are symmetric around the origin, say
$[-a_1, a_1]$, $[-a_2, a_2]$, $\ldots$ ,$[-a_N, a_N]$ for $a_1,a_2,\ldots,a_N \in \RR^n$. 
We then say that $K$ is \emph{induced} by $a_1, a_2,\ldots,a_N$ and write
\begin{equation}\label{eq:real_zon_form}
K = Z_R(a_1, a_2, \ldots, a_N) = \left\{ \sum_{j=1}^N \lambda_j a_j  :\ |\lambda_j| \le 1 \text{ for }1\le j \le N \right\}.
\end{equation}
We will always assume that $K$ is full-dimensional so, in particular,  $N \ge n$. Using the fact that $\Int(K) = \bigcup_{0 < t < 1} (tK)$ it is immediate that 
\begin{equation} \label{eq:int-zonotope}
\Int (K) = \left\{ \sum_{j=1}^N x_j a_j  :\ |x_j| < 1 \text{ for }1\le j \le N \right\}.
\end{equation}
It is also clear that 
\begin{equation} \label{eq:ext-zonotope}
\extreme(K) \subseteq \left\{ \sum_{j=1}^N x_j a_j  :\ |x_j| = 1 \text{ for }1\le j \le N \right\},
\end{equation}
and we denote the right hand side of \eqref{eq:ext-zonotope} by $Z'_R(a_1,a_2,\ldots,a_n)$. 

In \cite{Martini87} Martini resolved the Hadwiger conjecture for zonotopes:
\begin{theorem}[Martini] \label{thm:real_zonotope_ill}
Let $K \subseteq \RR^n$ be a zonotope that is not a linear image of the cube. Then $\ill^*K\le\ill K\le 3\cdot 2^{n-2}<2^n$. 
\end{theorem}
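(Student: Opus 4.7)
The plan is to reduce the problem to illuminating extreme points via Lemma \ref{lem:extreme}, parametrize these by sign vectors using \eqref{eq:ext-zonotope}, and then exploit the linear dependences among the generators $a_1,\ldots,a_N$ to find directions each of which illuminates several vertices at once.

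First I would set up a convenient minimal representation $K = Z_R(a_1, \ldots, a_N)$. Because $K$ is full-dimensional and not a linear image of $[-1,1]^n$, we may arrange $N \ge n+1$ with $a_1, \ldots, a_n$ linearly independent, $a_{n+1} = \sum_{j=1}^n c_j a_j$, and all $c_j \ne 0$ (after sign flips, $c_j > 0$). The essential case is $N = n+1$; for $N > n+1$ the extra generators supply additional linear dependences, and I would reduce to $N = n+1$ either by induction on $N$ or by a direct projection argument. By \eqref{eq:int-zonotope}, a direction $v$ illuminates the candidate vertex $p_\epsilon := \sum_j \epsilon_j a_j$ if and only if $v$ admits a representation $v = \sum_j \eta_j a_j$ with $\eta_j \epsilon_j < 0$ for every $j$; the non-uniqueness of this representation, via the shift $\eta \mapsto \eta + t(-c_1, \ldots, -c_n, 1)$ coming from the dependence, is precisely what allows a single direction to illuminate several distinct vertices.

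Next I would construct $3 \cdot 2^{n-2}$ directions explicitly. The saving factor $\tfrac34$ exactly matches the ratio between the illumination numbers of a parallelogram ($4$) and a hexagon ($3$), so a natural attempt is to partition the sign prefixes $\sigma \in \{\pm 1\}^{n-1}$ into $2^{n-2}$ antipodal pairs $\{\sigma, -\sigma\}$ and, for each pair, exhibit three directions whose sign-flexible representations jointly verify the illumination condition for every vertex $p_\epsilon$ whose first $n-1$ coordinates lie in $\{\sigma, -\sigma\}$. These three directions should be chosen so that, after projection onto the two-dimensional subspace spanned by $a_n$ and $a_{n+1}$, they play the role of the three standard illuminating directions of a hexagon; the key is that the ``mixed'' vertices $p_{(\sigma, +1, -1, \ldots)}$ and $p_{(\sigma, -1, +1, \ldots)}$, which are not handled by the two ``corner'' directions coming from $-\sum_j \sigma_j a_j \mp a_{n+1}$, should be simultaneously illuminated by a single ``interpolating'' direction, using the fact that $c_n \ne 0$.

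The main obstacle I expect is the explicit combinatorial verification of this construction, i.e., checking that for each chosen $v$ the coefficients $\eta_j$ can actually be tuned so that the sign inequalities $\eta_j\epsilon_j<0$ hold simultaneously for all the intended $\epsilon$'s. Handling general $N \ge n+2$ is a further complication: my preferred route would be induction on $N$, exploiting that adding a generator only provides more linear freedom, though some care is needed because new vertices are introduced as well. An alternative is to follow Boltjanski's approach via ``belts'' of the zonotope, which is in any case likely to be indispensable for the complex analog developed later in this section.
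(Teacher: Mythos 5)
Your overall architecture (reduce to $\extreme(K)$ via Lemma \ref{lem:extreme}, encode candidate vertices by sign vectors as in \eqref{eq:ext-zonotope}, and exploit the non-uniqueness of representations coming from the linear dependence to make one direction illuminate several vertices) is exactly the paper's, and your ``iff'' criterion for illuminating $p_\epsilon$ is correct. However, the concrete plan has genuine gaps. First, the normalization you assert --- that one can arrange $a_{n+1}=\sum_{j=1}^n c_ja_j$ with \emph{all} $c_j\ne 0$ --- is false in general: for $K=H\times[-1,1]^{n-2}$ with $H$ a hexagon, every dependence among $n+1$ generators involves only the three hexagon generators. This is not a cosmetic issue, because this product is precisely the extremal case where the bound $3\cdot 2^{n-2}$ is attained (Remark \ref{rem:sharp_bd_Re_zon}); the paper's Lemma \ref{lem:non_cube_zonotope_form} only guarantees a dependence with coefficients in $\{0,1\}$ that is nonzero in at least three positions, and the proof must live with zeros elsewhere. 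Second, and more seriously, your grouping cannot be completed in that extremal case. If the dependence does not involve the first $n-2$ coordinates, then refinement never changes those coordinates of a direction's representation, so a single direction can serve only one sign pattern $\tau\in\{\pm1\}^{n-2}$ on the cube factor; the two halves of your antipodal pair $\{\sigma,-\sigma\}$ carry \emph{opposite} cube patterns, so your three directions must split between them. But the four vertex types with prefix $\sigma$ correspond, in the hexagon factor, to three consecutive hexagon vertices (with one pattern degenerate), and no single direction illuminates three consecutive vertices of a centrally symmetric hexagon (their closed normal cones fill a closed arc of length $\pi$). Hence each half needs at least two directions, i.e.\ at least four per pair, contradicting your budget of three. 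The paper avoids this by grouping over the $2^{n-2}$ sign patterns of the coordinates \emph{outside} the three-term relation (the set $F$) and using the three fixed directions $\frac13(a_{n-1}-a_n),\frac13(a_n-a_{n+1}),\frac13(a_{n+1}-a_{n-1})$, so that for each fixed prefix all eight patterns of $(x_{n-1},x_n,x_{n+1})$ are covered via a refinement by $\delta$ as in \eqref{eq:relation}.

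Two further points. The reduction from $N>n+1$ generators should not go through ``projection'': the correct tool is the summand monotonicity of Fact \ref{sum_convex} combined with the rescaling Lemma \ref{lem:normilizing_zonotope}, writing $Z_R(b_1,\ldots,b_N)=Z_R(b_1,\ldots,b_{n+1})+Z_R(b_{n+2},\ldots,b_N)$ and illuminating the full-dimensional first summand; this is exactly what Lemma \ref{lem:non_cube_zonotope_form} packages. Finally, you yourself flag that the combinatorial verification of the sign inequalities is left open --- that verification (the two cases $x_{n-1}=x_n=x_{n+1}$ and the mixed case, each settled by an explicit refinement $\delta$) is the heart of the proof, so as it stands the proposal is both incomplete and, in its announced grouping, not repairable without changing the construction.
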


We will now reproduce the proof of Theorem \ref{thm:real_zonotope_ill}. The proof is essentially the one in \cite{boltjanski92}, with small simplifications,
but our presentation and the lemmas we prove will be useful for the complex case as well. 

We begin by proving that the illumination number of $Z_R(a_1,a_2,\ldots,a_N)$ depends only on the directions of the vectors 
$a_1, a_2,\ldots,a_N$. We will need the following fact, see e.g.,  \cite[Lemma 1]{boltjanski92}:

\begin{fact}
\label{sum_convex}
Assume $K = K_1 + K_2$ for $K_1, K_2 \subseteq \RR^n$ with $K_1$ being full-dimensional. The every measure $\mu$
which illuminates $K_1$ also illuminates $K$. In particular $\ill(K) \le \ill(K_1)$ and
$\ill^\ast (K) \le \ill^\ast (K_1)$.
\end{fact}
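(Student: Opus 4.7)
The plan is to reduce the entire statement to one elementary topological observation: since $K_1$ is full-dimensional, $\Int(K_1)$ is a nonempty open subset of $\RR^n$, and hence the Minkowski sum $\Int(K_1) + K_2$ is itself open, being a union of translates of $\Int(K_1)$. As it is also contained in $K=K_1+K_2$, this gives
$$\Int(K_1) + K_2 \sub \Int(K).$$

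With this inclusion in hand, I fix an arbitrary $x \in \partial K$ and decompose it as $x = x_1 + x_2$ with $x_1 \in K_1$ and $x_2 \in K_2$. I first note that $x_1$ must lie on $\partial K_1$: were $x_1$ in $\Int(K_1)$, the inclusion above would force $x \in \Int(K)$, contradicting $x \in \partial K$. Next, I show that any direction $v$ illuminating $x_1$ as a boundary point of $K_1$ automatically illuminates $x$ as a boundary point of $K$. Indeed, if $x_1 + tv \in \Int(K_1)$ for some $t>0$, then
$$x + tv = (x_1 + tv) + x_2 \in \Int(K_1) + K_2 \sub \Int(K),$$
so the same $t$ witnesses that $v$ illuminates $x$. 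In the language of the paper's definition, this is precisely the inclusion $A_{K_1}(x_1) \sub A_K(x)$.

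The two conclusions now drop out. If $\mu$ illuminates $K_1$, then for every $x \in \partial K$ we get $\mu(A_K(x)) \ge \mu(A_{K_1}(x_1)) \ge 1$, so $\mu$ illuminates $K$; taking the infimum over all illuminating measures yields $\ill^\ast(K) \le \ill^\ast(K_1)$. The classical inequality $\ill(K) \le \ill(K_1)$ follows either by specializing $\mu$ to a sum of Dirac masses, or simply by running the same argument for a finite illuminating set $V \sub \RR^n$ of $K_1$ and observing that for each $x \in \partial K$ some $v \in V$ illuminates the associated $x_1 \in \partial K_1$ and hence $x$ itself. I expect essentially no obstacle here: once the inclusion $\Int(K_1) + K_2 \sub \Int(K)$ is isolated, the remainder is bookkeeping, and the full-dimensionality assumption on $K_1$ enters only to ensure $\Int(K_1)$ is nonempty in $\RR^n$.
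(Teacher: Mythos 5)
Your proof is correct, and since the paper states this fact without proof (it simply cites Lemma~1 of \cite{boltjanski92}), your argument supplies exactly the standard reasoning behind that citation: the key inclusion $\Int(K_1)+K_2\sub\Int(K)$, which holds because the left-hand side is open and contained in $K$, immediately gives $A_{K_1}(x_1)\sub A_K(x)$ for any decomposition $x=x_1+x_2$ of a boundary point, and both inequalities follow. There is no gap; the full-dimensionality of $K_1$ is indeed used only to guarantee $\Int(K_1)\neq\emptyset$.
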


We now prove:

\begin{lemma}
\label{lem:normilizing_zonotope}
Let $a_1, a_2 ,\ldots, a_N \in \RR^n$  span
the whole space. Then a measure $\mu$  on $\RR^n$ illuminates $Z_R(a_1, \ldots, a_N)$ if and only if it illuminates $Z_R(r_1 a_1, \ldots, r_N a_N)$ for any $r_1, r_2, \ldots, r_N > 0$.
In particular, the bodies $Z_R(a_1, \ldots, a_N)$ and $Z_R(r_1 a_1, \ldots, r_N a_N)$ have the same illumination number and the same
fractional illumination number. 
\end{lemma}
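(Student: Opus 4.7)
The plan is to derive the lemma from Fact \ref{sum_convex} together with the invariance of illumination under positive dilation. I would first record two preliminaries. Since illumination is defined by the condition $x+tv\in\Int(K)$ for some $t>0$, and $x+tv\in\Int(K)$ holds iff $\lambda x+(\lambda t)v\in\Int(\lambda K)$, a direction $v$ illuminates $x$ in $K$ iff $v$ illuminates $\lambda x$ in $\lambda K$; consequently a measure $\mu$ illuminates $K$ iff it illuminates $\lambda K$ for every $\lambda>0$. Moreover, for a single vector $a$ and non-negative scalars $s,t\ge 0$, the Minkowski sum of parallel segments satisfies $sa[-1,1]+ta[-1,1]=(s+t)a[-1,1]$; summing over the generators yields the zonotope identity
\[
Z_R\bigl((p_1+q_1)a_1,\dots,(p_N+q_N)a_N\bigr)=Z_R(p_1a_1,\dots,p_Na_N)+Z_R(q_1a_1,\dots,q_Na_N),
\]
valid whenever all $p_j,q_j\ge 0$.

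With these in hand, each direction of the biconditional will follow by applying Fact \ref{sum_convex} after a suitable uniform rescaling of $r=(r_1,\dots,r_N)$. For the forward implication, suppose $\mu$ illuminates $Z_R(a_1,\dots,a_N)$ and set $\lambda=1/\min_j r_j>0$, so that each $\lambda r_j-1\ge 0$. The identity above gives
\[
Z_R(\lambda r_1 a_1,\dots,\lambda r_N a_N)=Z_R(a_1,\dots,a_N)+Z_R\bigl((\lambda r_1-1)a_1,\dots,(\lambda r_N-1)a_N\bigr),
\]
with the first summand full-dimensional by assumption. Fact \ref{sum_convex} then shows $\mu$ illuminates $Z_R(\lambda r_1 a_1,\dots,\lambda r_N a_N)=\lambda Z_R(r_1 a_1,\dots,r_N a_N)$, and scaling invariance converts this to $\mu$ illuminating $Z_R(r_1 a_1,\dots,r_N a_N)$. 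For the reverse implication, set $\lambda=1/\max_j r_j>0$, so that each $1-\lambda r_j\ge 0$. Scaling invariance turns a measure illuminating $Z_R(r_1a_1,\dots,r_Na_N)$ into one illuminating $Z_R(\lambda r_1 a_1,\dots,\lambda r_N a_N)$, and now the identity
\[
Z_R(a_1,\dots,a_N)=Z_R(\lambda r_1 a_1,\dots,\lambda r_N a_N)+Z_R\bigl((1-\lambda r_1)a_1,\dots,(1-\lambda r_N)a_N\bigr),
\]
together with full-dimensionality of the first summand (which holds because $r_j>0$ and the $a_j$ span $\RR^n$), allows Fact \ref{sum_convex} to conclude that $\mu$ illuminates $Z_R(a_1,\dots,a_N)$.

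The ``in particular'' statement is then immediate, since $\ill$ and $\ill^*$ are the infima of the total masses of exactly the same family of illumination measures for the two bodies. The main subtlety is the asymmetry of Fact \ref{sum_convex}: it can only push an illumination measure from a summand to the whole Minkowski sum, never the reverse. This is why two different uniform rescalings are needed, one by $1/\min_j r_j$ and one by $1/\max_j r_j$, to guarantee that the scalars appearing in each Minkowski decomposition are non-negative and that Fact \ref{sum_convex} is applicable as stated in both directions.
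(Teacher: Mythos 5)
Your proof is correct and follows essentially the same route as the paper: both rely on writing one zonotope as a Minkowski sum in which a positive dilate of the other appears as a full-dimensional summand, then combining Fact \ref{sum_convex} with the dilation invariance of illumination. The only difference is cosmetic -- you rescale by $1/\min_j r_j$ and $1/\max_j r_j$ in the two directions and spell both out, whereas the paper normalizes by $\max_j r_j$ and notes the reverse implication is symmetric.
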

\begin{proof}
Define $r:=\max_{1\le j\le N}r_j$ . Since $a_j=\frac{r_j}{r}a_j+\frac{r-r_j}{r}a_j$ for all $j$ we have
$$Z_R(a_1,\cdots,a_N)=Z_R\left(\frac{r_1}{r}a_1,\cdots,\frac{r_N}{r}a_N\right)+Z_R\left(\frac{r-r_1}{r}a_1,\cdots,\frac{r-r_N}{r}a_N\right).$$
If $\mu$ illuminates $Z_R(r_1 a_1, \ldots, r_N a_N)$ it also illuminates the dilation
$Z_R(\frac{r_1}{r} a_1, \ldots, \frac{r_N}{r} a_N)$, and then by Fact \ref{sum_convex} it also illuminates $Z_R(a_1,\cdots,a_N)$. The reverse implication is the same.

By considering only measures on the form $\mu = \sum_{j=1}^k \delta_{v_j}$ it follows that  $Z_R(a_1, \ldots, a_N)$ and 
$Z_R(r_1 a_1, \ldots, r_N a_N)$ have the same illumination number. By considering all Borel measures we see that they also have the same
fractional illumination number. 
\end{proof}

Next, we show we can restrict our attention to zonotopes of a special form:
\begin{lemma}\label{lem:non_cube_zonotope_form}
Let $K\subseteq \RR^n$ be a zonotope which is not a linear image of the cube. Then there exists a zonotope $K'=Z_R(a_1, a_2, \ldots, a_{n+1})$
with the following properties:
\begin{enumerate}
\item The vectors  $a_1, a_2, \ldots, a_n$ form a basis of $\RR^n$.
\item For any $1 \le j \le n$, $a_{n+1}$ is not parallel to $a_j$.
\item There exists a linear dependence of the form 
\begin{equation} \label{eq:relation} \lambda_1 a_1 + \lambda_2 a_2 + \cdots + \lambda_{n+1} a_{n+1} = 0 
\end{equation}
with $\lambda_j \in \{0,1\}$ for $1 \le j \le n-2$ and $\lambda_{n-1} = \lambda_n = \lambda_{n+1} = 1$.
\item We have $\ill(K) \le \ill(K')$ and $\ill^*(K) \le \ill^*(K')$. 
\end{enumerate}
\end{lemma}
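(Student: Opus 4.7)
The plan is to pass from $K$ to a sub-zonotope generated by a carefully chosen set of $n+1$ direction vectors, use Fact \ref{sum_convex} to control $\ill(K)$ and $\ill^*(K)$ by the illumination numbers of this smaller zonotope, and then exploit the rescaling freedom from Lemma \ref{lem:normilizing_zonotope}, together with the invariance of the segment $[-a,a]$ under $a \mapsto -a$, to normalize the (essentially unique) linear dependence among these $n+1$ vectors into the form prescribed in (3).

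Write $K = Z_R(b_1, \ldots, b_N)$ and group the $b_j$'s into equivalence classes of pairwise parallel vectors. If only $n$ classes occur, then picking one representative from each class yields a basis $v_1,\ldots,v_n$ of $\RR^n$, and collecting the generators in each class gives $K = \prod_{i=1}^n [-r_i v_i, r_i v_i]$ for some $r_i > 0$; this is a linear image of the cube, contradicting the hypothesis. Hence there are at least $n+1$ parallel-classes. Since the full family of lines spans $\RR^n$, one can choose $n$ of them that are linearly independent; fix representatives $a_1, \ldots, a_n$ from these classes, and pick $a_{n+1}$ from any further class. By construction $a_1, \ldots, a_n$ form a basis and $a_{n+1}$ is parallel to none of them, giving (1) and (2). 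Setting $K_0 := Z_R(a_1, \ldots, a_{n+1})$, we have $K = K_0 + Z_R(\text{remaining generators})$ with $K_0$ full-dimensional, so Fact \ref{sum_convex} yields $\ill(K) \le \ill(K_0)$ and $\ill^*(K) \le \ill^*(K_0)$.

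For (3), expanding $a_{n+1}$ in the basis $a_1, \ldots, a_n$ produces the unique-up-to-scale dependence $\sum_{j=1}^n \mu_j a_j - a_{n+1} = 0$. Since $[-a_j, a_j] = [a_j, -a_j]$, replacing any $a_j$ by $-a_j$ leaves $K_0$ unchanged, so we may assume each $\mu_j \ge 0$; and because $a_{n+1}$ is not parallel to any $a_j$, at least two of the $\mu_j$'s are strictly positive. Relabel two such indices as $n-1$ and $n$. Lemma \ref{lem:normilizing_zonotope} now permits us to replace $a_j$ by $\mu_j^{-1} a_j$ whenever $\mu_j > 0$ and to leave $a_j$ alone when $\mu_j = 0$, without altering $\ill$ or $\ill^*$; after this rescaling the dependence reads $\sum_{j=1}^{n-2} \lambda_j a_j + a_{n-1} + a_n - a_{n+1} = 0$ with $\lambda_j \in \{0,1\}$, and one final sign-flip $a_{n+1} \mapsto -a_{n+1}$ turns this into the form required by (3). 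Denoting the resulting zonotope by $K'$, property (4) follows because $K'$ differs from $K_0$ only by positive rescalings and sign-flips of generators, which preserve the (fractional) illumination number.

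The only real obstacle is the book-keeping around parallel classes: one must carefully verify that the ``exactly $n$ classes'' scenario forces $K$ to be a linear image of the cube, and that in all remaining cases one can extract $n+1$ classes containing a basis. Once this combinatorial reduction is in place, the remainder amounts to rescaling and sign-flipping a single linear relation, which makes the apparently rigid form $\lambda_{n-1}=\lambda_n=\lambda_{n+1}=1$ nothing more than a convenient labeling choice for the (at least) two positive $\mu_j$'s together with the coefficient of $a_{n+1}$.
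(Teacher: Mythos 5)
Your argument is essentially the paper's: extract $n+1$ generators consisting of a basis $a_1,\dots,a_n$ together with one further generator $a_{n+1}$ parallel to none of them, use Fact \ref{sum_convex} to discard the remaining generators, and then normalize the (unique up to scale) linear dependence by sign-flips and positive rescalings via Lemma \ref{lem:normilizing_zonotope}; the paper merges parallel generators into a single one rather than arguing with parallel classes, but that difference is cosmetic. One slip should be corrected: writing the dependence as $\sum_{j=1}^{n}\mu_j a_j - a_{n+1}=0$ with $\mu_j\ge 0$, the rescaling that produces coefficients in $\{0,1\}$ is $a_j\mapsto \mu_j a_j$ for $\mu_j>0$, not $a_j\mapsto \mu_j^{-1}a_j$ as you state; with your choice the relation in the new generators $a_j'=\mu_j^{-1}a_j$ reads $\sum_{j}\mu_j^{2}a_j' - a_{n+1}=0$, which is not of the required form. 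Since Lemma \ref{lem:normilizing_zonotope} permits any positive rescaling, replacing $\mu_j^{-1}$ by $\mu_j$ repairs this step, and the rest of your proof (the observation that non-parallelism forces at least two strictly positive $\mu_j$'s, the final sign-flip of $a_{n+1}$, and the preservation of properties (1), (2) and (4) under positive rescalings and sign-flips) goes through exactly as in the paper.
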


\begin{proof}
Write $K  = Z_R(b_1, b_2, \ldots, b_N)$. We can always assume no two $b_j$'s are parallel, since otherwise we can replace them by 
a single vector. Since $K$ is full-dimensional the set $\{b_1, b_2, \ldots, b_N\}$ contains a basis of $\RR^n$ and by  permuting 
the indices we may assume that $b_1, b_2, \ldots, b_n$ is such a basis. Since $K$ is not a linear image of the cube we in 
fact have $N \ge n+1$.

Clearly there exists a linear dependence 
\[ \eta_1 b_1 + \eta_2 b_2 + \cdots \eta_{n+1} b_{n+1} = 0. \]
Since no two vectors are parallel we know that $\eta_j \ne 0$ for at least three values of $j$. One of these values must be $j = n+1$. Since $b_1, \ldots, b_n$ is a basis, and by permuting indices again we may assume $ j = n-1$ and $j=n$ are also such indices. By replacing $b_j$ by $-b_j$ if necessary, which doesn't change $K$, we may assume that $\eta_j \ge 0$ for all $j$.  

We now define $a_j = \eta_j b_j$ if $\eta_j \ne 0$, and $a_j = b_j$ otherwise and set $K' = Z_R(a_1, a_2, \ldots, a_{n+1})$. Clearly the first three properties are satisfied. For the forth note that by Lemma \ref{lem:normilizing_zonotope} and Fact \ref{sum_convex} we have 
\[ 
\ill(K') = \ill\left( Z_R(b_1, \ldots, b_{n+1})\right) \ge
\ill(\left(Z_R(b_1, \ldots, b_{n+1}) + Z_R(b_{n+2}, \ldots, b_N)\right) = \ill(K), 
\] 
and similarly $\ill^*(K') \ge \ill^*(K)$. 
\end{proof}

To explain the importance of  Lemma \ref{lem:non_cube_zonotope_form},  assume we are given a zonotope $K = Z_R(a_1, \ldots, a_{n+1})$ of the form given by the lemma.
Given a vector $x = \sum_j x_j a_j \in \partial K$, and a direction $v = \sum_j v_j a_j$, 

we need to know when $x + tv \in \Int(K)$.
Of course, since $x + tv = \sum_j (x_j + t v_j)a_j$, a sufficient condition is that $|x_j + t v_j| < 1$ for all $j$, but this is not
a necessary condition. Instead we may fix $\delta \in \RR$ and use \eqref{eq:relation} to write
\[ x + t v = \sum_{j=1}^{n+1} (x_j + tv_j + \delta \lambda_j) a_j, \] 
a procedure that we call a \emph{refinement by $\delta$}. Therefore in order to prove that $x + tv \in \Int(K)$ it is enough to find any 
$\delta \in \RR$ such that $|x_j + tv_j + \delta \lambda_j | < 1$ for all $1\le j \le n+1$.

We are now ready to prove Theorem \ref{thm:real_zonotope_ill}. We again stress that this is essentially the proof of \cite{boltjanski92}, 
presented slightly differently:

\begin{proof}[Proof of Theorem \ref{thm:real_zonotope_ill}]
The inequality $\ill^*K\le \ill K$ is always true. To prove that $\ill K\le 3\cdot 2^{n-2}$ we first apply Lemma \ref{lem:non_cube_zonotope_form} and assume
without loss of generality that $K = Z_R(a_1, a_2, \ldots, a_{n+1})$ is of the form asserted by the lemma. 

We will find an explicit set of vectors $Y$ with $|Y| \le 3\cdot 2^{n-2}$ which illuminates $K$. By \eqref{eq:ext-zonotope} and Lemma \ref{lem:extreme} it 
is enough to illuminate $K_0 = Z'_R(a_1, a_2, \ldots, a_{n+1})$. Define
$$F=\{\sigma_1a_1+\cdots+\sigma_{n-2}a_{n-2}\ :\ |\sigma_j|=1\text{ for all }j\}$$
and 
$$V=\left\{\frac13(a_{n-1}-a_n),\frac13(a_n-a_{n+1}),\frac13(a_{n+1}-a_{n-1})\right\}.$$
We claim that $Y=F+V$, which clearly satisfies $|Y| \le 3\cdot 2^{n-2}$, illuminates $K_0$. To see this fix $x=x_1a_1+\cdots x_{n+1}a_{n+1}\in K_0$, i.e. $|x_j|=1$ for all $j$. There are two cases:

\paragraph{Case I:} There exists $\alpha$ with $|\alpha|=1$ such that $x_{n-1}=x_n=x_{n+1}=\alpha$.  

This is the simpler case -- we simply choose $f=-x_1a_1 - \cdots - x_{n-2}a_{n-2}\in F$, and (arbitrarily) $v=\frac13(a_{n-1}-a_n)\in V$. Then, performing refinement by $\delta=-\frac {\alpha}2$ we see that 
\begin{align*}
x + (f+v) &= \sum_{j=1}^{n+1} \left(x_j + f_j + v_j - \frac{\alpha}{2} \lambda_j\right)a_j  \\
        &= \sum_{j=1}^{n-2} \left(-\frac{\alpha}{2}\lambda_j a_j\right) + \left(\frac{1}{3} + \frac{\alpha}{2}\right)a_{n-1} +
            \left(-\frac{1}{3} + \frac{\alpha}{2}\right)a_{n} + \frac{\alpha}{2} a_{n+1}
\end{align*}
(passing from the first to the second line used the fact that $\lambda_{n-1}=\lambda_n=\lambda_{n+1} = 1$).
Since all coefficients have absolute value $<1$ we see that $x+(f+v) \in \Int(K)$, so $f+v$ illuminates $x$.

\paragraph{Case II:} There exists $\alpha$ with $|\alpha|=1$ such that one of $x_{n-1},x_n,x_{n+1}$ is equal to $\alpha$ and the other two are equal to $-\alpha$.

We again choose $f=-x_1a_1 - \cdots - x_{n-2}a_{n-2}\in F$, but this time the choice of $v$ is not arbitrary: If $i\in\{n-1,n,n+1\}$ 
is the index with $x_i=\alpha$, we choose $v$ to be the unique element of $V$ of the form 
$v=\frac{1}{3}\left(\alpha a_j-\alpha a_{i}\right)$ with $j\ne i$. For ease of notation let us assume that  $\alpha=1$ and $i=n$, so
$x_n=1$, $x_{n-1}=x_{n+1}=-1$ and $v=\frac13(a_{n-1}-a_n)$ -- the other cases work in the same way. 

Performing refinement by $\delta=\frac{\alpha}6$ we see that
\begin{equation*}
x + (f+v) = \sum_{j=1}^{n+1} \left(x_j + f_j + v_j + \frac{\alpha}{6} \lambda_j\right)a_j  =
    \sum_{j=1}^{n-2} \left(\frac{1}{6}\lambda_j a_j\right) - \frac{1}{2}a_{n-1} +
    \frac{5}{6} a_{n} - \frac{5}{6} a_{n+1}.
\end{equation*}
Again all coefficients have absolute value $ < 1$, so $x+(f+v)\in \Int(K)$ and $f+v$ illuminates $x$.
All other choices for $\alpha$ and $i$ work in exactly the same way, with the same refinement by $\frac{\alpha}{6}$.
\end{proof}

\begin{remark}\label{rem:sharp_bd_Re_zon}
One can check that if $H \subseteq \RR^2$ is a hexagon and $K = H \times [-1,1]^{n-2} \subseteq \RR^n$ then $\Ill(K) = 3\cdot 2^{n-2}$. 
Therefore the bound given by  Theorem \ref{thm:real_zonotope_ill} is sharp. 
\end{remark}

\subsection{Classical illumination of complex zonotopes}
\label{zonotopeclassicsection}

We begin with some notation and simple observations concerning complex zonotopes and their illumination, which are analogous to those made in Section
\ref{sec:real_zonotopes} for real zonotopes. 

Recall that a complex zonotope $K\sub\CC^n$ is, up to a translation, the Minkowski sum of finitely many complex discs $A_1,\dots,A_N$ in $\CC^n$. Each of these discs $A_j$ is represented by some vector $a_j\in\CC^n$ such that  $A_j=\{re^{\theta i}a_j\,:\, 0\le r\le1,\,\,0\le\theta<2\pi\}$. We  say that $K$ is induced by $a_1,\dots,a_N$ and write 
\begin{equation*}
	K=Z_C(a_1,\dots,a_N)=\left\{\sum_{j=1}^N x_ja_j\,:\, x_j\in\CC,\,|x_j|\le1\text{ for } 1\le j\le N\right\}.
\end{equation*}
Clearly, $Z_C(a_1,\dots,a_N)=Z_C(e^{\theta_1 i}a_1,\dots,e^{\theta_N i}a_N)$ for any choice of $\theta_1,\dots,\theta_N\in\RR$.

As in the case of real zonotopes, we will always assume that $K$ is full-dimensional and hence, in this case, we have 
\begin{equation}\label{eq:comp_zon_int}
\Int(K)=\left\{\sum_{j=1}^N x_ja_j\,:\, x_j\in\CC,\,|x_j|<1\text{ for } 1\le j\le N\right\}
\end{equation}
and 
\begin{equation}\label{eq:comp_zon_ext}
\extreme(K)\sub\left\{\sum_{j=1}^N x_ja_j\,:\, x_j\in\CC,\,|x_j|=1\text{ for } 1\le j\le N\right\}.
\end{equation}
As in the real case, we denote the right hand side of \eqref{eq:comp_zon_ext} by $Z'_C(a_1,\dots,a_N)$.

The following lemmas are complex analogues of Lemma \ref{lem:normilizing_zonotope} and Lemma \ref{lem:non_cube_zonotope_form}, respectively. We omit their proofs  as they are identical to the proofs of their real counterparts, with the exception of using a rotation of $b_j$ by $\frac{\overline{\eta}_j}{|\eta_j|}$ instead of its reflection through the origin, in order to assume without loss of generality  that $\lambda_1,\dots,\lambda_{n+1}$ are all real non-negative numbers.

\begin{lemma}\label{lem:normalized_complex_zonotope}
Let $a_1, a_2 ,\ldots, a_N \in \CC^n$  span
the whole space. Then a measure $\mu$  on $\CC^n$ illuminates $Z_C(a_1, \ldots, a_N)$ if and only if it illuminates $Z_C(r_1 a_1, \ldots, r_N a_N)$ for any $r_1, r_2, \ldots, r_N > 0$.
In particular, the bodies $Z_C(a_1, \ldots, a_N)$ and $Z_C(r_1 a_1, \ldots, r_N a_N)$ have the same illumination number and the same
fractional illumination number. 
\end{lemma}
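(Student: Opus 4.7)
The plan is to follow the proof of Lemma \ref{lem:normilizing_zonotope} essentially verbatim, replacing the segment $[-1,1]a_j$ with the complex disc $Da_j$ throughout. First, set $r = \max_{1 \le j \le N} r_j$. The key geometric identity from the real case rested on writing the segment as a Minkowski sum of two shorter segments. In the complex setting this is replaced by the observation that two concentric discs in $\CC$ of radii $\alpha$ and $\beta$ Minkowski-sum to a disc of radius $\alpha + \beta$. In particular, for each $j$ we have
\[ D a_j = \frac{r_j}{r} D a_j + \frac{r - r_j}{r} D a_j. \]

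Summing this identity over $j$ yields the decomposition
\[
Z_C(a_1, \ldots, a_N) = Z_C\!\left(\tfrac{r_1}{r} a_1, \ldots, \tfrac{r_N}{r} a_N\right) + Z_C\!\left(\tfrac{r - r_1}{r} a_1, \ldots, \tfrac{r - r_N}{r} a_N\right).
\]
Now suppose $\mu$ illuminates $Z_C(r_1 a_1, \ldots, r_N a_N)$. Since illumination is preserved under positive dilation (by a factor of $1/r$), $\mu$ also illuminates the first summand $Z_C\!\left(\tfrac{r_1}{r} a_1, \ldots, \tfrac{r_N}{r} a_N\right)$. Because $a_1, \ldots, a_N$ span $\CC^n$, this summand is full-dimensional, and therefore Fact \ref{sum_convex} (which is stated for convex bodies in a real vector space and hence applies equally well here, after identifying $\CC^n$ with $\RR^{2n}$) guarantees that $\mu$ illuminates the whole sum $Z_C(a_1, \ldots, a_N)$. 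The reverse implication is symmetric: running the same argument with $r' = \max_j (1/r_j)$ and the roles of $a_j$ and $r_j a_j$ swapped shows that any $\mu$ illuminating $Z_C(a_1, \ldots, a_N)$ also illuminates $Z_C(r_1 a_1, \ldots, r_N a_N)$.

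The equality of the classical illumination numbers then follows by specializing to measures of the form $\mu = \sum_{j=1}^k \delta_{v_j}$, while the equality of fractional illumination numbers follows by letting $\mu$ range over all non-negative Borel measures. I do not foresee any real obstacles here: the only thing one must verify carefully is that the disc identity $\alpha D + \beta D = (\alpha + \beta) D$ is applied correctly (it fails in general for other planar convex bodies, but is immediate for centered discs), and that Fact \ref{sum_convex} is insensitive to whether the ambient body carries complex structure, since it is a purely real-convex statement.
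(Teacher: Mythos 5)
Your proof is correct and is essentially the argument the paper intends: the paper omits the proof of Lemma \ref{lem:normalized_complex_zonotope} precisely because it is the real proof of Lemma \ref{lem:normilizing_zonotope} carried over verbatim, using the decomposition $Z_C(a_1,\ldots,a_N)=Z_C(\frac{r_1}{r}a_1,\ldots,\frac{r_N}{r}a_N)+Z_C(\frac{r-r_1}{r}a_1,\ldots,\frac{r-r_N}{r}a_N)$ together with Fact \ref{sum_convex}. Your observations about the disc identity $\alpha D+\beta D=(\alpha+\beta)D$ and the applicability of Fact \ref{sum_convex} under the identification $\CC^n\cong\RR^{2n}$ are exactly the (routine) points that need checking, so nothing is missing.
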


\begin{lemma}\label{lem:non_poly_zonotope_form}
Let $K\sub\CC^n$ be a complex zonotope with non-empty interior which is not a linear image of a polydisc. Then there exists a complex zonotope $K'=Z_C(a_1,\dots,a_{n+1})$ with the following properties:
\begin{enumerate}
	\item The vectors $a_1,\dots,a_n$ form a basis of $\CC^n$.
	\item For any $1\le j\le n$,  $a_{n+1}$ is not parallel to $a_j$.
	\item There exists a linear dependence of the form 
	\begin{equation}\label{eq:comp_zon_dep}
		\lambda_1a_1+\dots+\lambda_{n+1}a_{n+1}=0
	\end{equation}
with $\lambda_j\in\{0,1\}$ for all $\le j\le n-2$, and $\lambda_{n-1}=\lambda_n=\lambda_{n+1}=1$.
\item We have $\ill(K)\le\ill(K')$ and $\ill^*(K)\le\ill^*(K')$.
\end{enumerate}
\end{lemma}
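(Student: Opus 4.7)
The plan is to mirror the proof of Lemma \ref{lem:non_cube_zonotope_form} essentially verbatim, with the sole substantive modification being the normalization of the phases of the coefficients of a chosen linear dependence. In the real case one uses the reflection $b_j \mapsto -b_j$, which leaves the segment $[-b_j, b_j]$ invariant, to flip the sign of a negative coefficient $\eta_j$; in the complex case one instead rotates $b_j \mapsto \frac{\overline{\eta_j}}{|\eta_j|}b_j$, which leaves the disc $D b_j$ invariant and therefore leaves $Z_C(b_1,\ldots,b_N)$ unchanged while turning $\eta_j$ into the positive real number $|\eta_j|$.

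First I would write $K = Z_C(b_1,\ldots,b_N)$, assuming (by merging parallel discs) that no two of the $b_j$'s are complex-parallel. Since $K$ is full-dimensional, $\{b_1,\ldots,b_N\}$ spans $\CC^n$, so after relabeling I may take $b_1,\ldots,b_n$ to be a basis. If $N = n$, then $K$ is the linear image of $D^n$ sending the standard basis to $b_1,\ldots,b_n$, contrary to hypothesis; hence $N \ge n+1$.

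Next I would choose a nontrivial linear dependence $\sum_{j=1}^{n+1}\eta_j b_j = 0$. The coefficient $\eta_{n+1}$ must be nonzero (otherwise $b_1,\ldots,b_n$ would fail to be linearly independent), and at least two of $\eta_1,\ldots,\eta_n$ must be nonzero (otherwise $b_{n+1}$ would be a complex multiple of a single $b_j$, contradicting the no-parallel-pairs assumption). By permuting indices within $\{1,\ldots,n\}$ I may take $\eta_{n-1}$ and $\eta_n$ to be such nonzero coefficients. Applying the rotation step above to every $b_j$ with $\eta_j \ne 0$ replaces each $\eta_j$ by $|\eta_j| \ge 0$ without changing $K$. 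I then set $a_j = |\eta_j|\, b_j$ when $\eta_j \ne 0$ and $a_j = b_j$ otherwise, with $\lambda_j = 1$ when $\eta_j \ne 0$ and $\lambda_j = 0$ otherwise. Properties (1)--(3) are then immediate, since positive scaling preserves both the basis property and non-parallelism, while $\sum_j \lambda_j a_j = \sum_{\eta_j \ne 0} |\eta_j|\, b_j = 0$ gives the required dependence with $\lambda_{n-1}=\lambda_n=\lambda_{n+1}=1$.

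For property (4), I would set $K' = Z_C(a_1,\ldots,a_{n+1})$ and invoke Lemma \ref{lem:normalized_complex_zonotope} to see that the illumination and fractional illumination numbers of $K'$ coincide with those of $Z_C(b_1,\ldots,b_{n+1})$. Combining the Minkowski decomposition $K = Z_C(b_1,\ldots,b_{n+1}) + Z_C(b_{n+2},\ldots,b_N)$ with Fact \ref{sum_convex} applied to the full-dimensional summand $Z_C(b_1,\ldots,b_{n+1})$ then yields $\ill(K)\le \ill(K')$ and $\ill^*(K)\le \ill^*(K')$. The main obstacle is not analytic but combinatorial: one must ensure that, after permutation, three of the $\eta_j$'s are simultaneously nonzero at indices $n-1$, $n$, and $n+1$. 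This step is identical to the real case and is handled by the nonzero-count argument above.
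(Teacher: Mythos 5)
Your proposal is correct and is essentially the paper's own argument: the paper omits this proof, stating that it is identical to the real case (Lemma \ref{lem:non_cube_zonotope_form}) except that the sign flip $b_j\mapsto -b_j$ is replaced by a unit rotation of $b_j$ that makes the corresponding coefficient of the linear dependence a nonnegative real number, and your write-up fills in exactly those details (no parallel pairs, hence at least three nonzero coefficients including $\eta_{n+1}$, permutation, rescaling, then Lemma \ref{lem:normalized_complex_zonotope} and Fact \ref{sum_convex} for property (4)). One cosmetic remark: to turn $\eta_j$ into $|\eta_j|$ the replacement should be $b_j\mapsto \frac{\eta_j}{|\eta_j|}\,b_j$ (so that $\eta_j b_j=|\eta_j|\cdot\frac{\eta_j}{|\eta_j|}b_j$), not $b_j\mapsto \frac{\overline{\eta_j}}{|\eta_j|}b_j$; this conjugation slip appears verbatim in the paper's own remark and is harmless, since the disc $Db_j$ is invariant under any unit rotation.
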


Despite the similarities demonstrated in this section between real and complex zonotopes, it is not clear how 
to adapt the proof of Theorem \ref{thm:real_zonotope_ill} to the complex case. For example, that proof used 
strongly the fact that the extremal set of a real zonotope is finite, which is not true for complex zonotopes. We thus provide an inherently different proof for Theorem \ref{zonotopeclassic}, as follows:

\begin{proof}[Proof of Theorem \ref{zonotopeclassic}]
Let $n\ge 2$ and fix  a complex zonotope with non-empty interior   $K\sub\CC^n$ and which is not a linear image of a polydisc. Applying Lemma \ref{lem:non_poly_zonotope_form}, we assume without loss of generality that $K = Z_C(a_1, a_2, \ldots, a_{n+1})$ is of the form asserted by the  lemma.
By Theorem \ref{polydiscclassicill}, we have
\begin{equation}\label{ill_com_zon_twice_dim}
\ill(D^n)=2^{n}-1> \ill(D^n)-1=2\ill(D^{n-1})
\end{equation}
and hence it suffices to prove that $\ill(K)\le 2\ill(D^{n-1})$. 
To achieve this goal, we shall construct two  sets $V_1,V_3\sub\CC^n$, each of which of cardinality $\ill(D^{n-1})$, such that $V_1\cup V_3$ illuminates all the boundary points of $K$ which lie in $Z'_C(a_1,\dots,a_{n+1})$.  Note that by \eqref{eq:comp_zon_ext} and Lemma \ref{lem:extreme},  $V_1\cup V_3$ would  also illuminate $K$. To construct our illuminating sets, 
let $H_1:=\Sp\{a_1,\dots,a_{n-1}\}$, $H_2:=\Sp\{a_1,\dots,a_{n-2},a_{n}\}$ and define: 
$$K_1:=K(a_1,\dots,a_{n-1})\sub H_1,$$
$$K_2:=K(a_1,\dots,a_{n-2},a_{n})\sub H_2.$$
Since $\Sp\{a_1,\dots,a_{n}\}=\CC^n$, $K_1$ and $K_2$ are each a linear image of an $(n-1)$-dimensional polydisc. For $i\in\{1,2\}$, let $V_i\sub H_i$ be a minimal illuminating set of $K_i$ relative to $H_i$. This means that  $V_i$ has cardinality $\ill (D^{n-1})$, and that  for every $x:=x_1a_1+\dots+x_{n-1}a_{n-1}\in K_i$ there exists  $v=v_1a_1+\dots + v_{n-1}a_{n-1}\in V_i$ which illuminates $x$ as a boundary point of $K_i$. Equivalently, the latter means that there exists $t>0$ such that $|x_j+tv_j|<1$ for all  $j\neq n+1-i$.

While the set $V_1$  will be used ``as is" to illuminate our complex zonotope $K$, the set $V_2$ will require an alteration by shifting each one of its element. Namely, for each  $v=v_1a_1+\dots+v_{n-2}a_{n-2}+v_na_n\in V_2 $ define
\begin{equation}
\label{v_2}
    v':=v_1a_1+\dots+v_{n-2}a_{n-2}+v_na_n-v_{n}a_{n+1}
\end{equation}
and denote $V_3=\{v'\,:\,v\in V_2\}$. Clearly $|V_3|=|V_2|=\ill(D^{n-1})$.

We claim that  $V_1\cup V_3$ illuminates $K$. To see this,  fix a point $x=x_1a_1+\cdots+x_{n+1}a_{n+1}$ in $Z'_C(a_1,\dots,a_{n+1})$ and consider the following two cases. \\

\noindent\textbf{Case I:}  $x_{n}\ne -x_{n+1}$. 

 Since $x-(x_na_n-x_{n+1}a_{n+1})=x_1a_1+\dots+x_{n-1}a_{n-1}\in K_1$, 
it follows by the definition of $V_1$ that there exists $v\in V_1$ and $t>0$ such that 
\begin{equation}\label{eq:comp_zon_case1}
|x_j+tv_j|<1 \text{ for all }1\le j<n.
\end{equation}
Next we show that  $x+tv\in\Int(K)$. By \eqref{eq:comp_zon_int}, this means that we need to write $x+tv$ in the form $y_1a_1+\dots+y_{n+1}a_{n+1}$ with $|y_j|<1$ for all $1\le j\le n+1$. Note that we cannot simply choose $y_j=x_j+tv_j$ for all $j$ as we have no guarantee that either  $|x_{n}+tv_n|<1$ or $|x_{n+1}+tv_{n+1}|<1$. However, we may preform a refinement using the linear dependence $\lambda_1a_1+\dots+\lambda_{n+1}a_{n+1}=0$ given in \eqref{eq:comp_zon_dep} to write  $x+tv$ in the desired form. Indeed, since  $x_n\neq -x_{n+1}$ we can find some
$z\in\CC$ with $|z|=1$ such that both $\langle x_{n},z \rangle<0$ and $\langle x_{n+1},z \rangle<0$. This means, on the one hand, that  we can choose $\delta>0$ small enough so that both $|x_n+\delta z|<1$ and $|x_{n+1}+\delta z|<1$. On the other hand, by \eqref{eq:comp_zon_case1}, we can  choose such  $\delta$  so that also 
$|x_j+tv_j+\delta z\lambda_j|<1$ for all $1\le j<n$.
Recall that  $\lambda_n=\lambda_{n+1}=1$. Therefore, as illustrated in  Figure \ref{fig:zontopefirstcase}, we have 
\begin{align*}
	x+tv
	&=   x+tv+\delta z(\lambda_1a_1+\dots+\lambda_{n+1}a_{n+1})\\
	&= (x_1+tv_1+\delta z\lambda_1)a_1+\cdots + (x_{n-1}+tv_{n-1}+\delta z\lambda_{n-1})a_{n-1}\\
	&+(x_{n}+\delta z)a_{n} +(x_{n+1}+\delta z)a_{n+1}=y_1a_1+\dots+y_{n+1}a_{n+1}\in \Int (K).
\end{align*}

\begin{figure}[ht!]
	\centering
	\includegraphics[width=0.9\linewidth]{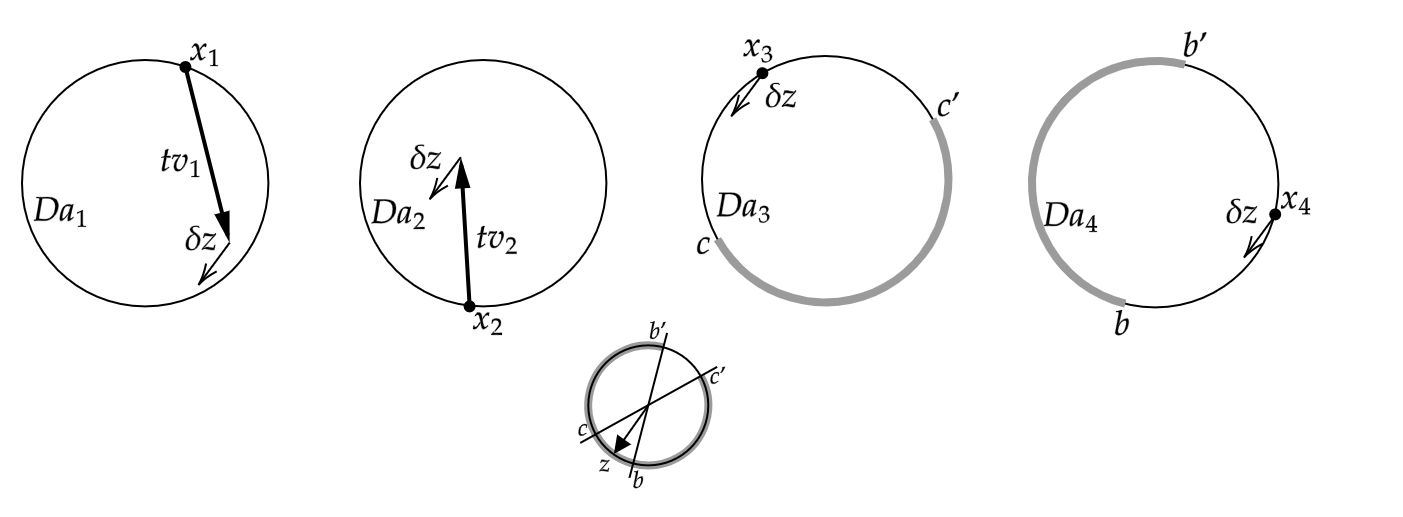}
	\caption{A visualization, for $n=3$, of the fact that if $x_3\neq -x_4$ then any $v\in V_1$ which illuminates $x_1a_1+x_2a_2\in K_1$ also 
	illuminates $x=x_1a_1+x_2a_2+x_3a_3+x_4a_4\in K$. Here $Da_j=K(a_j)$ is the  disc induced by $a_j$ and $z$  is a direction  illuminating both $x_3\in Da_3$ and $x_4\in Da_4$. Using  a small refinement by $\delta z$ and \eqref{eq:comp_zon_dep} we write  $x+tv=x+tv+\delta z(\lambda_1a_1+\dots+\lambda_{4}a_{4})$ in the form of \eqref{eq:comp_zon_int}. 
	}	\label{fig:zontopefirstcase}
\end{figure}

\noindent\textbf{Case II:} $x_{n}= -x_{n+1}$.

In this case,  we will show that $V_3$ illuminates $x$.  Our first step towards this goal is to  pick some $v\in V_2$ which illuminates the point
$
x_1a_1+\dots+x_{n-2}a_{n-2}+x_na_n\in K_2
$
and $t>0$ for which $|x_j+tv_j|<1$ for all  $j\in\{1,2,\dots,n-2,n\}$. By the definition of $v'$ in \eqref{v_2} and the fact that $x_{n+1}=-x_n$, we have 
\begin{align}\label{eq:comp_zon_case2}
	|x_j+tv'_j|<1 \text{ for all }  j\in\{1,\dots,n+1\},\, j\neq n-1
\end{align}
Our second and last step is to show that $x+tv'\in\Int(K)$ by putting it in the form of \eqref{eq:comp_zon_int}  using the linear dependence \eqref{eq:comp_zon_dep}. To do so, let $z\in\CC$ be some unit vector for which $\langle x_{n-1},z\rangle<0$, and $\delta>0$ small enough so that $|x_{n-1}+\delta z|<1$.  By \eqref{eq:comp_zon_case2}, we may choose such  $\delta$ so that also  
\begin{align*}
	|x_j+tv'_j+\delta z\lambda_j|<1 \text{ for all }  j\in\{1,\dots,n+1\},\, j\neq n-1.
\end{align*}
Recalling that  $\lambda_{n-1}=1$, we therefore have (as illustrated in Figure \ref{fig:zonotopesecondcase})
\begin{align*}
x+tv' &=   x+tv'+\delta z(\lambda_1a_1+\dots+\lambda_{n+1}a_{n+1})\\
&= \sum_{\substack{j=1 \\ j\neq n-1}}^{n+1}(x_j+tv'_j+\delta z\lambda_j)a_j+(x_{n-1}+\delta z)a_{n-1} \in \Int (K),
\end{align*}
as needed. 
\begin{figure}[ht!]
    \centering
    \includegraphics[width=0.9\linewidth]{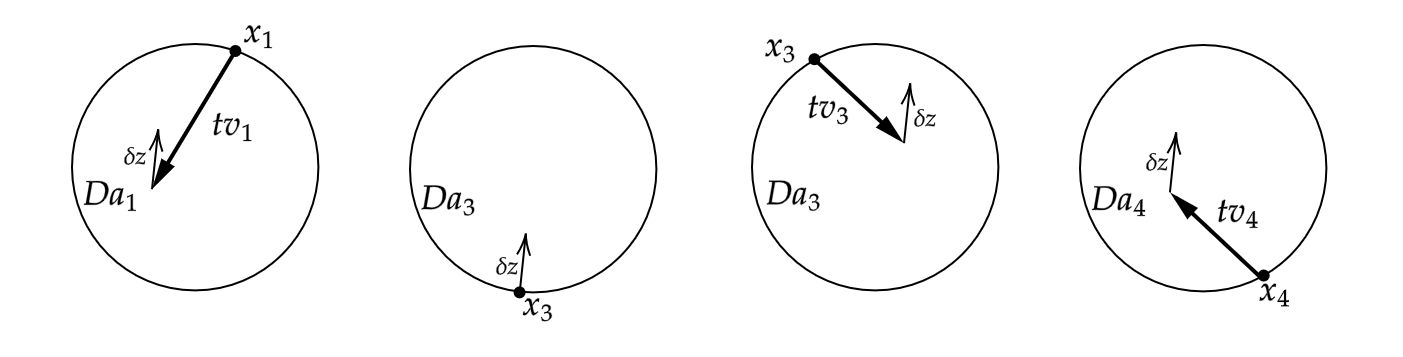}
    \caption{A visualization, for $n=3$, of the fact that if $x_3=-x_4$ and $v\in V_2$ illuminates $x_1a_1+x_3a_3\in K_2$, then $v'\in V_3$ illuminates $x=x_1a_1+x_2a_2+x_3a_3+x_4a_4\in K$. Here $Da_j=K(a_j)$ is the disc induced by $a_j$ and $z$ is a direction which illuminates $x_3\in Da_3$. Using a small refinement by $\delta z$ and \eqref{eq:comp_zon_dep} we can write $x+tv'=x+tv'+\delta z (\lambda_1a_1+\dots+\lambda_4 a_4)$ in the form of \eqref{eq:comp_zon_int}.
}\label{fig:zonotopesecondcase}
\end{figure}
\end{proof}

\begin{remark}
As opposed to the  case of real zonotopes (see Remark \ref{rem:sharp_bd_Re_zon} above), we do not know whether the bound provided in the proof of Theorem \ref{zonotopeclassic} is sharp, that is,  if there exists a complex zonotope $K\sub\CC^n$ such that $\ill(K)=\ill(D^n)-1$.
\end{remark}

\subsection{Fractional illumination of complex zonotopes}
\label{zonotopefracsection}

In this section we prove Theorem \ref{zonotopefrac},  thus confirming the fractional version of the complex illumination conjecture for the class of complex zonotopes.

Denote by $\Sp_\RR\{a_1,\dots,a_N\}$  the span of complex vectors  $a_1,\dots,a_N\in\CC^n$ over the field $\RR$, that is the set of all linear combinations of $a_1,\dots,a_N$ with real coefficients. 
As in \eqref{eq:real_zon_form} we also denote the real zonotope in  $\Sp_\RR\{a_1,\dots,a_N\}$ which is  induced by the complex vectors $a_1,\dots,a_N$, and thus embedded into $\CC^n$,
by
$$ 
Z_R(a_1, a_2, \ldots, a_N) = \left\{ \sum_{j=1}^N \lambda_j a_j \,:\,\lambda_j\in\RR\text{ and } |\lambda_j| \le 1 \text{ for }1\le j \le N \right\}.
$$
Clearly,  $Z_R(a_1,\dots,a_N)$ is a lower-dimensional subset of    $Z_C(a_1,\dots,a_N)$.
More generally, any translate $Z$  of $Z_R(a_1,\dots,a_{N})$ (and therefore  any Minkowski sum of $N$ line segments in $\CC^n$) is a real zonotope.
We say that  $y\in \CC^n$ illuminates $x\in Z$ if $x+ty\in\relint(Z)$ for some $t>0$. 

As mentioned in the introduction, our proof  of Theorem \ref{zonotopefrac} relies on the solution of the classical illumination problem for real zonotopes, described in Section \ref{sec:real_zonotopes}.

The main idea of the proof  is  to construct a continuous illumination measure for a given complex zonotope $Z=Z_C(a_1,\dots,a_N)$ out of  a classical finite illuminating set $Y$ of the real zonotope $Z_R(a_1,\dots,a_N)$. This is done by employing the properties of complex zonotopes to show that for every $x\in Z'_C(a_1,\dots,a_N)$ there exists a subset  of full measure $\Theta\sub(0,\pi)$ such that for every $\theta\in \Theta$, the set $e^{\theta i}Y$ illuminates $x$.  Note that this does not mean that we can find $\theta$ such that  $e^{\theta i}Y$ illuminates  $Z$, as $\Theta$ depends on the point $x$. However,  as captured in the following lemma, to obtain a fractional illumination of $Z$  one can circumvent this obstacle by uniformly spreading each point mass in $Y$ onto a half circle.

To describe this construction  precisely, let $Y\sub\CC^n\setminus\{0\}$ be finite. Denote by $U$ the uniform probability measure on $(0,\pi)$. Given  $y\in Y$ for some finite set $0\not\in Y$, consider the function  $\pi_y:(0,\pi)\to\CC^n$  defined by $\pi_y(\theta)=e^{\theta i}y$,  whose image we denote by $V_y$. Let $\mu_y=U\circ \pi^{-1}_y$ be the push-forward of $U$ onto $V_y$ and define the measure  $\mu_Y:=\sum_{y\in Y} \mu_y$. 

\begin{lemma}\label{lem:frac_ill_theta}
	Let $K\sub\CC^n$ be a complex convex body.  Suppose that for each $x\in\extreme(K)$ there exists $\Theta_x\sub(0,\pi)$ with  
$U(\Theta_x)=1$ such that  
 $e^{\theta i}Y$  illuminates $x$ for
 every $\theta\in\Theta_x$.
Then $\mu_Y$ illuminates $K$. 
\end{lemma}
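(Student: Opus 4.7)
The plan is to directly bound $\mu_Y(A_K(x))$ from below for each extreme point $x$ of $K$, using the hypothesis as a full-measure statement in the angular variable, and then extend to all boundary points via the extreme-point reduction underlying Lemma \ref{lem:extreme}.

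First I would observe that the argument of Lemma \ref{lem:extreme} carries over verbatim to the fractional setting: for every $x \in \partial K$ there is an extreme point $y \in \extreme(K)$ lying in the same supporting face of $K$, hence with $N_x(K) \subseteq N_y(K)$ and therefore $A_K(y) \subseteq A_K(x)$. By monotonicity of $\mu_Y$ this reduces the task to showing $\mu_Y(A_K(x)) \ge 1$ for every $x \in \extreme(K)$.

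Now fix $x \in \extreme(K)$. Unwinding the definition, the condition ``$e^{\theta i}Y$ illuminates $x$'' means precisely that $e^{\theta i} y \in A_K(x)$ for at least one $y \in Y$, i.e., that $\theta \in \bigcup_{y \in Y} \pi_y^{-1}(A_K(x))$. Since $A_K(x)$ is open (the interior condition $x+tv\in\Int(K)$ is open in $v$) and each $\pi_y$ is continuous, each preimage is a Borel set. The hypothesis on $\Theta_x$ thus gives
$$
1 \;=\; U(\Theta_x) \;\le\; U\!\left( \bigcup_{y \in Y} \pi_y^{-1}(A_K(x)) \right) \;\le\; \sum_{y \in Y} U\bigl(\pi_y^{-1}(A_K(x))\bigr) \;=\; \sum_{y \in Y} \mu_y(A_K(x)) \;=\; \mu_Y(A_K(x)),
$$
by subadditivity and the defining identity $\mu_y = U \circ \pi_y^{-1}$ for the push-forward, which is exactly what is needed.

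The proof is thus essentially a union bound applied to a clean reformulation of the hypothesis, and I do not expect a serious obstacle. The only points requiring attention are the routine measurability check (openness of $A_K(x)$ together with continuity of $\pi_y$) and the adaptation of Lemma \ref{lem:extreme} to measures, which follows at once from $A_K(y) \subseteq A_K(x)$ by monotonicity.
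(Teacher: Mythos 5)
Your proof is correct and follows essentially the same route as the paper: a union bound over $y\in Y$ combined with the push-forward identity $\mu_y = U\circ\pi_y^{-1}$ to get $\mu_Y(A_K(x))\ge U(\Theta_x)=1$ at extreme points, and then the normal-cone argument of Lemma \ref{lem:extreme} (which indeed transfers to measures via $A_K(y)\subseteq A_K(x)$) to pass to all boundary points. Your explicit remarks on measurability and on the measure-theoretic adaptation of Lemma \ref{lem:extreme} only make explicit what the paper leaves implicit.
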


\begin{proof}
Fix $x\in\extreme(K)$. 
For each $y\in Y$ denote by $\Theta_{x,y}\sub\Theta_x$ the subset of all $\theta\in \Theta_x$ such that $e^{\theta i}y$ illuminates $x$. We thus have,
$$\mu_Y(Y_x)=\sum_{y\in Y}\mu_y(Y_x)=\sum_{y\in Y}\mu_y(\pi_y(\Theta_{x,y}))=\sum_{y\in Y}U(\Theta_{x,y})\ge
U\Big(\bigcup_{y\in Y}\Theta_{x,y}\Big)\ge
U(\Theta_x).
$$
By Lemma \ref{lem:extreme}, it follows that $\mu_Y$ is  an illumination measure of $K$.
\end{proof}

We are now ready to prove Theorem \ref{zonotopefrac}.

\begin{proof}[Proof of Theorem \ref{zonotopefrac}] 
We first apply Lemma \ref{lem:non_poly_zonotope_form} to assume, without loss of generality, that $K=Z_C(a_1,\dots,a_{n+1})$ is of the form asserted by the lemma.

Consider $H=\Sp_\RR\{a_1,\dots,a_n\}\sim \RR^n$. Since the coefficients in the linear dependence  \eqref{eq:comp_zon_dep} are all real numbers, it follows that $K_R:=Z_R(a_1,\dots,a_{n+1})$ is a real zonotope with non-empty interior in $H$, which is not a linear image of a cube. Therefore, by Theorem \ref{thm:real_zonotope_ill}, there exists a set $Y\sub H$ of at most $3\cdot 2^{n-2}$ directions  which illuminates $K_R$.

Our goal is to show  that $\mu_Y$ illuminates $K$. Since the total mass of $\mu$ is $\mu(\CC^n)=|Y|<2^n$,  this will imply that $\ill^*(K)<2^n$ and thus complete our proof. 

 Let $x=x_1a_1+\dots+x_{n+1}a_{n+1}\in Z'_C(a_1,\dots,a_{n+1})$ be a boundary point of $K$,  and fix any
$\theta\in(0,\pi)$ such that  $\langle e^{\theta i}, x_j \rangle\ne 0$ for all $1\le j\le n+1$ (note that almost every $0<\theta<\pi $ satisfies this condition). 
By \eqref{eq:comp_zon_ext} and Lemma \ref{lem:frac_ill_theta}, it is enough to show that $e^{\theta i}Y$ illuminates $x$ in order to conclude that $\mu_Y$ illuminates $K$. In other words, it is enough to show that there exist $y\in Y$  and $t>0$ such that 
\begin{equation}\label{eq:theta_y}
x+te^{\theta i}y	\in \Int(K).
\end{equation}
To this end, denote
$$
A_j:= D_j \cap \ell_j^{\theta},\,\,j\in\{1,\dots,n+1\}
$$
where $D_j=Z_C(a_j)$ is the disc induced by $a_j$  and $\ell_j^{\theta}=\{x_ja_j+t e^{\theta i}a_j \,:\,t\in \RR\}$ is the line passing through   $x_ja_j$ in direction $e^{\theta i }a_j$. Since $\langle e^{\theta i}, x_j \rangle\ne 0$, it follows that $A_j:= D_j \cap \ell_j^{\theta}$ is a proper segment, see Figure \ref{fig:realcutofcomplex}.  
Consider the real zonotope $A=A_1+\dots+A_{n+1}$. It  is not hard to verify (again, see Figure \ref{fig:realcutofcomplex}) that
$\relint(A_j)\sub \relint(D_j)$,  and that $A$  is a translation of the centrally-symmetric real zonotope $K'_R=Z_R(\frac{|A_1|}{2|a_1|}e^{\theta i}a_1,\dots,\frac{|A_{n+1}|}{2|a_{n+1}|}e^{\theta i}a_{n+1})$. 
\begin{figure}[ht!]
	\centering
	\includegraphics[width=1\linewidth]{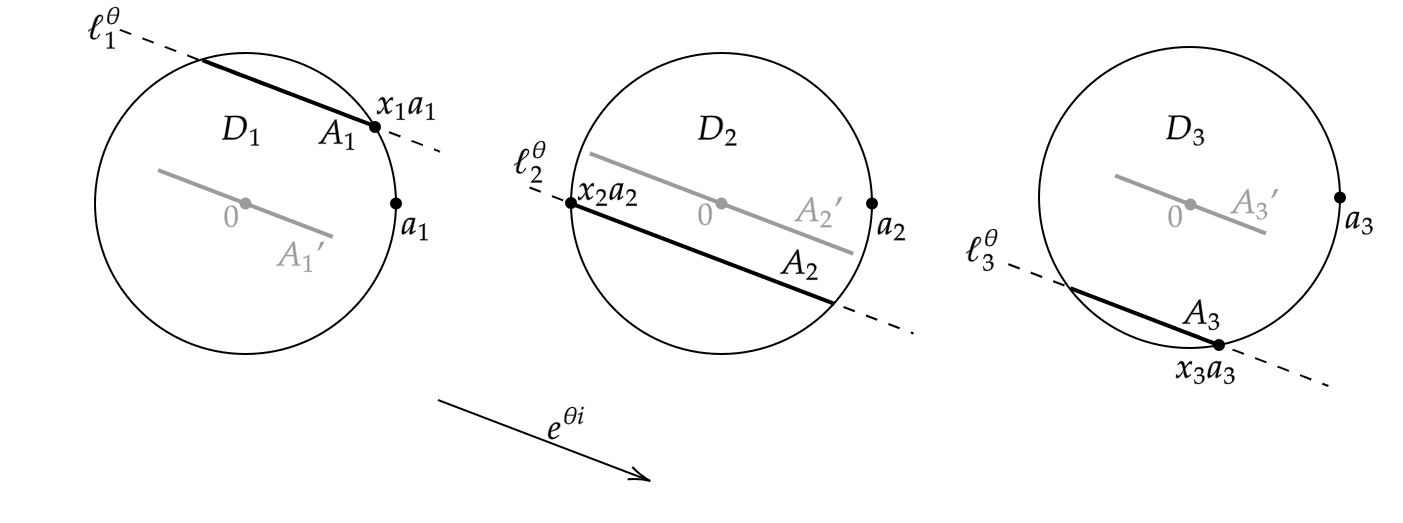}
	\caption{A representation of a complex zonotope $K=D_1+D_2+D_3\sub\CC^2$ by three circles.  
Given a point $x=x_1a_1+x_2a_2+x_3a_3\in K_0$ and a direction $e^{\theta i}$ which is not orthogonal to any vector in  $\{x_1,x_2,x_3\}$, the resulting segments $A_i=\ell^\theta_i\cap D_i$ are proper.  Also notice that their sum  $A$ can be  translated to the centrally-symmetric real zonotope $A'$ (the sum of the segments $A'_1,A'_2,A'_3$ in grey), both of which are contained in $K$.  		
}
	\label{fig:realcutofcomplex}
\end{figure}
Therefore, it follows by \eqref{eq:int-zonotope} and \eqref{eq:comp_zon_int} that
$$
\relint(A)=\relint(A_1)+\dots+\relint(A_{n+1})\sub\relint(D_1)+\dots+\relint(D_{n+1})=\Int(K).
$$
Moreover, Lemma  \ref{lem:normilizing_zonotope} tells us that  $K'_R$ has the same illuminating sets as 
$$Z_R(e^{\theta i}a_1,\dots,e^{\theta i}a_{n+1})=e^{\theta i}Z_R(a_1,\dots,a_{n+1}),$$
and so $e^{\theta i}Y$ illuminates both  $K'_R$ and $A$. Since $x\in A$, we can thus   $y\in Y$ and $t>0$ such that 
$x+te^{\theta i}y\in\relint(A)\sub\Int(K),
$ which establishes \eqref{eq:theta_y} and completes  our proof.  
\end{proof}

\section{Complex Zonoids and their illumination}
\label{sec:zonoids}

\subsection{Complex Zonoids}
\label{sec:zonoid_definition}
As in the real case, we define a complex zonoid to be the Hausdorff limit of complex zonotopes. 
Recall that we always translated our zonotopes to be origin-symmetric, so our zonoids will be origin-symmetric as well, although this is not essential. 

Just like in the real case, there are many equivalent definitions for complex zonoids: 
\begin{theorem} \label{the:mainzonoid}
The following are equivalent for a complex convex body $Z \subseteq \CC^n $:
\begin{enumerate}
\item $Z$ is the Hausdorff limit of complex zonotopes.

\item \label{itm:measure-zonoid} There exists a finite Borel measure $\mu$ on $S =\{x\in\CC^n\ |\ \|x\|= 1\}$ such that the support function of $Z$ is:
$$h_Z(\theta)= \int_{S}|{\langle x,\theta \rangle}_\CC | d\mu(x)$$

\item There exists an isometry:
$$T:B(Z^\circ)\to L^1(X,\nu)$$
for some measure space $(X, \nu)$, where $B(Z^\circ)$ is the  Banach space for which the polar $Z^\circ$ is the unit ball, and $L^1$ is over the complex field.

\item $Z$ is the $w^*$-continuous linear image of the closed unit ball $B(L^\infty[0,1])$.

\item \label{itm:abs-cont-zonoid} There exists an absolutely continuous vector-valued function  $\psi:[0,\ell]\to\CC^n$, parameterized by arc-length, 
such that 
\[ 
Z = \left\{ \int_0^\ell g(s)\psi'(s)ds\ :\ g:[0,\ell]\to\CC \text{ is measurable with } \|g\|_{L^\infty} \le 1\right\}. 
\] 
\end{enumerate}
\end{theorem}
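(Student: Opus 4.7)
The plan is to close the cycle (1) $\Rightarrow$ (2) $\Rightarrow$ (3) $\Rightarrow$ (4) $\Rightarrow$ (5) $\Rightarrow$ (1), with most of the geometric content concentrated in the equivalence (1) $\Leftrightarrow$ (2); the remaining arrows are essentially functional-analytic translations via Banach-space duality.

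For (1) $\Rightarrow$ (2), I would first verify that a complex zonotope $K=Z_C(a_1,\ldots,a_N)$ has support function $h_K(\theta)=\sum_{j=1}^N |\langle a_j, \theta \rangle_\CC|$, which follows from the rotational invariance of each disc $D a_j$ (picking optimal phases in each coordinate independently); this exhibits (2) with the atomic measure $\mu_K=\sum_j |a_j|\,\delta_{a_j/|a_j|}$. For a Hausdorff limit $K_i\to Z$ the support functions converge uniformly on the Euclidean sphere, and integrating $h_{K_i}$ against the Haar probability measure on $S$ produces a uniform bound on the total masses $\mu_{K_i}(S)$. The Banach--Alaoglu / Riesz representation theorem on the compact metric space $S$ then supplies a weak-$*$ subsequential limit $\mu$, and continuity of $x\mapsto |\langle x,\theta\rangle_\CC|$ passes the integral representation to the limit. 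Conversely, for (2) $\Rightarrow$ (1) one approximates $\mu$ weak-$*$ by finitely supported positive measures $\mu_n=\sum_j c_j^{(n)}\delta_{x_j^{(n)}}$; the zonotopes $Z_n=Z_C(c_j^{(n)} x_j^{(n)})$ then have support functions converging pointwise to $h_Z$, which upgrades to uniform convergence on $S$ by the uniform Lipschitz bound for sublinear functions with bounded values, hence $Z_n\to Z$ in Hausdorff distance.

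The equivalence (2) $\Leftrightarrow$ (3) is almost tautological: (2) states precisely that the linear map $T\colon (\CC^n,\|\cdot\|_{Z^\circ})\to L^1_\CC(S,\mu)$ given by $\theta\mapsto \langle \cdot, \theta\rangle_\CC$ is a linear isometry (using $h_Z(\theta)=\|\theta\|_{Z^\circ}$ for a complex body, which itself uses the phase-invariance of $Z$). For the converse, given an abstract isometry $T\colon B(Z^\circ)\to L^1(X,\nu)$, I would define $F\colon X\to \CC^n$ by $F(x)=(\overline{T(e_1)(x)},\ldots,\overline{T(e_n)(x)})$, push $\nu$ forward to $\CC^n\setminus\{0\}$, weight by $|F|$, and radially project to $S$; the resulting measure $\mu$ realizes (2). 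For (3) $\Rightarrow$ (4) the Banach adjoint $T^*\colon L^\infty(X,\nu)=L^1(X,\nu)^*\to B(Z^\circ)^*=B(Z)$ is $w^*$-continuous and, by the Hahn--Banach theorem applied to the isometric embedding $T$, sends the closed unit ball of $L^\infty$ surjectively onto $Z$; to reduce the base space to $[0,1]$ with Lebesgue measure one replaces $L^1(X,\nu)$ by the separable closed linear span of $T(\CC^n)$ and invokes the classical fact that every separable sublattice of an $L^1$-space embeds isometrically in $L^1[0,1]$.

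For (4) $\Rightarrow$ (5), a $w^*$-continuous linear map $S\colon L^\infty[0,1]\to\CC^n$ is automatically of the form $S(g)=\int_0^1 g(s)\phi(s)\,ds$ for some $\phi\in L^1([0,1];\CC^n)$, and setting $\psi(t)=\int_0^t \phi(s)\,ds$ followed by a reparametrization by Euclidean arc-length delivers the absolutely continuous curve required in (5). Finally, (5) $\Rightarrow$ (1) follows by a Riemann-sum approximation: partitioning $[0,\ell]$ into small intervals $\{I_j^{(n)}\}$ and setting $a_j^{(n)}=\int_{I_j^{(n)}}\psi'(s)\,ds$, the zonotopes $Z_n=Z_C(a_1^{(n)},\ldots,a_{N_n}^{(n)})$ have support functions $\sum_j|\langle a_j^{(n)},\theta\rangle_\CC|$ converging uniformly in $\theta$ to $\int_0^\ell|\langle \psi'(s),\theta\rangle_\CC|\,ds=h_Z(\theta)$, where this last equality uses that the supremum in $h_Z(\theta)=\sup_{\|g\|_\infty\le 1}\re\int_0^\ell g(s)\langle\psi'(s),\theta\rangle_\CC\,ds$ is attained by $g(s)=\overline{\sign\langle\psi'(s),\theta\rangle_\CC}$. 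The main obstacle I expect is the complex bookkeeping in (2) $\Leftrightarrow$ (3) --- complex $L^1$-isometries see only moduli and lose all phase information, so it is crucial that the kernel $|\langle\cdot,\cdot\rangle_\CC|$ in (2) is invariant under phase rotations of either argument and that the target in (3) is a \emph{complex} $L^1$; a secondary technical point is the reduction to $L^\infty[0,1]$, which relies on a standard but nontrivial Banach-lattice representation theorem.
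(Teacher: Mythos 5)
Your plan is essentially sound, but note that it proves much more than the paper itself does: the paper explicitly declines to give a full proof of the five-fold equivalence, citing the real-case references (Bolker and the Boltyanski--Martini--Soltan monograph) and proving only the implication $(1)\Rightarrow(2)$ (Proposition \ref{pro:zonoidmeasure}), since that is the only part needed for the illumination results. For that one implication your route is genuinely different from the paper's. You redo the classical compactness argument directly in the complex setting: compute $h_{Z_C(a_1,\ldots,a_N)}(\theta)=\sum_j|\langle a_j,\theta\rangle_\CC|$, represent each zonotope by an atomic measure on $S$, bound the total masses by integrating the uniformly convergent support functions over the sphere, and extract a weak-$*$ subsequential limit. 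The paper instead observes that every complex zonotope, hence every complex zonoid, is in particular a \emph{real} zonoid, invokes the known real representation $h_Z(\theta)=\int_S|\langle x,\theta\rangle_\RR|\,d\mu(x)$, and then converts the real kernel into the complex one by averaging over phases, using the invariance $h_Z(e^{it}\theta)=h_Z(\theta)$ together with the identity $|\langle x,y\rangle_\CC|=\frac14\int_0^{2\pi}|\langle e^{it}x,y\rangle_\RR|\,dt$ of Lemma \ref{real_complex_ip}. Your argument is self-contained and also delivers the converse $(2)\Rightarrow(1)$ by discretizing $\mu$; the paper's is shorter because it delegates the compactness step to the real theory. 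Your remaining arrows $(2)\Leftrightarrow(3)\Rightarrow(4)\Rightarrow(5)\Rightarrow(1)$ follow the standard Bolker scheme and look correct in outline, with two points deserving care: in $(3)\Rightarrow(4)$ the closed span of $T(\CC^n)$ is a finite-dimensional subspace, not a sublattice, so you should either pass to the closed sublattice (or sub-$\sigma$-algebra) it generates or simply quote that every separable subspace of an $L^1$-space embeds isometrically into $L^1[0,1]$; and in $(4)\Rightarrow(5)$ the arc-length reparametrization must handle the set where the density $\phi$ vanishes (it contributes nothing to $Z$, but the substitution $g\mapsto g\circ\sigma$ requires this measure-theoretic bookkeeping to exhaust all admissible $g$).
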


In the real case it is well-known that these definitions are equivalent: the equivalence of the the first four definitions 
is given in \cite{Bolker}, and the final definition appears in \cite{Boltyanski1997}. We will
not give a full proof of Theorem \ref{the:mainzonoid} in the complex case as it is fairly similar to the real case and is beyond the scope of this paper.
Instead we will only prove that the first definition implies the second, as this is the only part of the theorem we  
will need in order to study the illumination number of complex zonoids.

First, let us explain our notation. Since we have a natural identification $\CC^n \cong \RR^{2n}$, we actually have 
two inner products on $\CC^n$: The standard complex inner product on $\CC^n$, and the standard real inner product on 
$\RR^{2n}$. To avoid confusion we denote them by $\langle x, y \rangle_\CC$ and $\langle x, y \rangle_\RR$ respectively.
The two inner products are related as follows.
\begin{lemma} \label{real_complex_ip}
For all $x,y \in \CC^n$ we have
$$\left |{\langle x,y \rangle}_\CC\right|= \frac{1}{4} \int_0^{2\pi}\left|{\langle e^{it} x,y \rangle}_\RR\right|dt.$$
\end{lemma}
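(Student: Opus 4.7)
The plan is a short direct computation based on a single observation linking the two inner products: namely, for any $u, v \in \CC^n$ one has $\langle u, v \rangle_\RR = \re\, \langle u, v \rangle_\CC$. This is immediate from writing $u_j = a_j + i b_j$ and $v_j = c_j + i d_j$ and comparing $\sum_j (a_j c_j + b_j d_j)$ with $\re \sum_j u_j \overline{v_j}$.

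Using this, I would first note that multiplication by $e^{it}$ pulls out of the complex inner product, so
\[
\langle e^{it} x, y \rangle_\CC = e^{it} \langle x, y \rangle_\CC.
\]
Write the complex scalar $\langle x, y \rangle_\CC$ in polar form as $r e^{i\theta}$ with $r = |\langle x, y \rangle_\CC|$ and $\theta \in \RR$ (taking any $\theta$ if $r = 0$, in which case both sides are $0$ and the identity is trivial). Then
\[
\langle e^{it} x, y \rangle_\RR = \re\bigl(e^{it} r e^{i\theta}\bigr) = r \cos(t + \theta).
\]

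I would finish by integrating and using $2\pi$-periodicity together with the standard evaluation $\int_0^{2\pi} |\cos s|\, ds = 4$:
\[
\frac{1}{4}\int_0^{2\pi} \bigl|\langle e^{it} x, y \rangle_\RR\bigr|\, dt = \frac{r}{4}\int_0^{2\pi} |\cos(t+\theta)|\, dt = \frac{r}{4}\cdot 4 = |\langle x, y \rangle_\CC|.
\]

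There is really no obstacle here; the whole point is to package the identity $\langle \cdot, \cdot \rangle_\RR = \re\, \langle \cdot, \cdot \rangle_\CC$ together with the elementary fact that the absolute value of a complex number is (up to the normalization $1/4$) the $L^1$-average over a full period of the real part of its rotations. This identity is exactly what will later bridge the complex zonoid support function to an integral of real inner products, thereby connecting condition (\ref{itm:abs-cont-zonoid}) of Theorem \ref{the:mainzonoid} to the real theory.
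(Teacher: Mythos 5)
Your proof is correct, and it takes a slightly different (and arguably cleaner) route than the paper. The paper first reduces to special coordinates: by unitary invariance of both inner products it assumes $y=(r,0,\dots,0)$ and $x=(r_1e^{\theta_1 i},\dots,r_ne^{\theta_n i})$, and then computes the real inner product explicitly, landing on $\int_0^{2\pi}|rr_1\cos(t+\theta_1)|\,dt=4rr_1$. You avoid the coordinate reduction altogether by invoking the identity $\langle u,v\rangle_\RR=\re\langle u,v\rangle_\CC$ together with $\CC$-linearity of the first slot, which reduces the integrand directly to $\bigl|\langle x,y\rangle_\CC\bigr|\,|\cos(t+\theta)|$; both arguments then finish with the same elementary evaluation $\int_0^{2\pi}|\cos s|\,ds=4$. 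What your version buys is that no unitary normalization is needed and the conceptual content (the modulus of a complex number is $\tfrac14$ of the $L^1$-average of the real parts of its rotations) is stated explicitly; what the paper's version buys is that it never needs to articulate the relation between the two inner products, since in the chosen coordinates the real inner product is computed by hand. One small correction to your closing remark: in the paper this lemma feeds into Proposition \ref{pro:zonoidmeasure}, i.e.\ the representation (\ref{itm:measure-zonoid}) of Theorem \ref{the:mainzonoid} via a measure on the sphere, not condition (\ref{itm:abs-cont-zonoid}); this does not affect the validity of your proof.
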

\begin{proof}
Note that for any unitary map $U: \CC^n \to \CC^n$, replacing $x$ and $y$ by $Ux$ and $Uy$ keeps the identity unchanged. 
We may therefore assume without loss of generality that $y = (r,0,0\ldots,0)$ and 
$x = (r_1 e^{\theta_1 i}, \ldots, r_n e^{\theta_n i})$. We then have 
\begin{align*}
\int_0^{2\pi}\left|{\left\langle e^{ti} x,y \right\rangle}_\RR\right| dt &= 
\int_0^{2\pi}\left|{\left\langle (r_1 e^{\left(\theta_1+t\right) i}, \ldots, r_n e^{\left(\theta_n+t\right) i}), 
    (r,0,0\ldots,0) \right\rangle}_\RR\right| dt \\
& = \int_0^{2\pi} | r r_1 \cos(t+\theta_1) | dt = 4r r_1 = 4| \langle x, y \rangle_\CC |. \qedhere
\end{align*}
\end{proof}

We are now ready to prove:

\begin{proposition} \label{pro:zonoidmeasure}
Let $Z \subseteq \CC^n$ be a complex zonoid. Then there exists a Borel measure $\mu$ on the unit sphere $S \subseteq \CC^n$
such that 
$$h_Z(\theta)= \int_{S}\left|{\langle x,\theta \rangle}_\CC \right| d\mu(x).$$
\end{proposition}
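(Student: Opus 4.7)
The plan is to first establish the representation for complex zonotopes directly, and then pass to the Hausdorff limit using a weak-$\ast$ compactness argument on the space of finite Borel measures on $S$.

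For a complex zonotope $K = Z_C(a_1,\ldots,a_N)$, I would first observe that the support function $h_{Da}(\theta) = \sup_{|z|\le 1} \langle za,\theta\rangle_\RR = \sup_{|z|\le 1} \re\bigl(z \langle a,\theta\rangle_\CC\bigr) = |\langle a,\theta\rangle_\CC|$ for a single complex disc $Da$. Since the support function is additive with respect to Minkowski sums, one obtains
\[
h_K(\theta) = \sum_{i=1}^N |\langle a_i,\theta\rangle_\CC| = \int_S |\langle x,\theta\rangle_\CC|\, d\mu_K(x),
\]
where $\mu_K = \sum_{i=1}^N \|a_i\|\, \delta_{a_i/\|a_i\|}$ is a finite discrete measure on $S$.

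Next, given a general complex zonoid $Z$, I would pick approximating complex zonotopes $K_j \to Z$ in Hausdorff distance, with associated measures $\mu_j$ on $S$ as above. The key analytic step is to bound $\mu_j(S)$ uniformly in $j$. I would integrate the identity $h_{K_j}(\theta) = \int_S |\langle x,\theta\rangle_\CC|\,d\mu_j(x)$ against the uniform probability measure $\sigma$ on $S$, apply Fubini, and use the transitivity of the unitary group on $S$ to conclude that
\[
\int_S |\langle x,\theta\rangle_\CC|\, d\sigma(\theta) = c
\]
is a positive constant independent of $x \in S$. This gives $\mu_j(S) = c^{-1}\int_S h_{K_j}\, d\sigma$, which is bounded since $h_{K_j}\to h_Z$ uniformly on $S$ (Hausdorff convergence implies uniform convergence of support functions on the sphere).

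With uniformly bounded total masses on the compact space $S$, the Banach–Alaoglu theorem (equivalently, weak-$\ast$ compactness of bounded sets in $C(S)^\ast$) furnishes a subsequence $\mu_{j_k}$ converging weakly-$\ast$ to a finite Borel measure $\mu$ on $S$. For each fixed $\theta \in \CC^n$ the map $x \mapsto |\langle x,\theta\rangle_\CC|$ is continuous on $S$, so
\[
h_Z(\theta) = \lim_{k\to\infty} h_{K_{j_k}}(\theta) = \lim_{k\to\infty}\int_S |\langle x,\theta\rangle_\CC|\, d\mu_{j_k}(x) = \int_S |\langle x,\theta\rangle_\CC|\, d\mu(x),
\]
which is the desired representation. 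The step I anticipate as the main obstacle is the uniform bound on $\mu_j(S)$, but the symmetry argument via integration against $\sigma$ handles it cleanly; the remainder is a routine weak-$\ast$ limit.
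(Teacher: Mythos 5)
Your proof is correct, but it follows a genuinely different route from the paper. You adapt the standard compactness proof for real zonoids directly to the complex setting: the exact identity $h_{Z_C(a_1,\ldots,a_N)}(\theta)=\sum_i |\langle a_i,\theta\rangle_\CC|$ gives discrete representing measures $\mu_j$ for the approximating zonotopes, the unitary invariance of the uniform measure $\sigma$ on $S$ yields $\mu_j(S)=c^{-1}\int_S h_{K_j}\,d\sigma$ and hence a uniform mass bound (since Hausdorff convergence gives uniform convergence of support functions), and weak-$\ast$ compactness of bounded measures on the compact sphere lets you pass to the limit against the continuous integrand $x\mapsto|\langle x,\theta\rangle_\CC|$. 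This is precisely the approach the paper mentions is possible but deliberately avoids: instead, the paper observes that every complex zonoid is in particular a \emph{real} zonoid (discs are planar zonoids, and the class of real zonoids is closed under Hausdorff limits), invokes the classical real representation $h_Z(\theta)=\int_S|\langle x,\theta\rangle_\RR|\,d\mu(x)$ as a black box, and then uses the rotation invariance $e^{ti}Z=Z$ together with the averaging identity $|\langle x,y\rangle_\CC|=\frac14\int_0^{2\pi}|\langle e^{it}x,y\rangle_\RR|\,dt$ and Fubini to convert the real kernel into the complex one, up to a constant factor absorbed into $\mu$. Your argument buys self-containedness -- it does not rely on the nontrivial real zonoid theorem, only on soft weak-$\ast$ compactness and a symmetry argument for the mass bound -- at the cost of redoing the limiting argument; the paper's proof is shorter and reduces all analysis to the known real result plus an elementary trigonometric computation. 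Both are complete proofs of the proposition.
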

It is possible to adapt the standard proof for real zonoids, but instead we will use the real result 
to derive the complex one. 
\begin{proof}

First note that $Z$ is also a real zonoid: Indeed, since every two-dimensional disc is a real zonoid it follows that 
every complex zonotope is a real zonoid. As $Z$ is the limit of such complex zonotopes it is also a real zonoid. Therefore 
there exists a Borel measure $\mu$ on $S$ such that 
$$h_Z(\theta)= \int_{S}\left|{\langle x,\theta \rangle}_\RR \right| d\mu(x).$$
However $Z$ is also a complex body, so $e^{ ti} Z = Z$ for all $t \in \RR$. Therefore 
$h_Z(e^{ti} \theta) = h_Z(\theta)$ for all $\theta \in S$ and $t \in \RR$. Averaging over $t$ we obtain 
\begin{align*}
h_Z(\theta) &= \frac{1}{2\pi} \int_0^{2\pi} h_Z(e^{ ti} \theta)dt 
= \frac{1}{2\pi} \int_0^{2\pi} \int_{S}\left|{\langle x,e^{ ti}\theta \rangle}_\RR \right| d\mu(x) \\
&= \frac{1}{2\pi} \int_{S}\int_0^{2\pi} \left|{\langle x,e^{ti}\theta \rangle}_\RR \right| d\mu(x)
= \frac{4}{2\pi} \int_S \left|{\langle x,\theta \rangle}_\CC \right| d\mu(x).
\end{align*}
This is the claimed result up to multiplying $\mu$ by a constant.
\end{proof} 

We finish this section by remarking that, as in the real case, not every complex convex body is a complex zonoid. 
For example, the unit ball of $\ell_1^n$ is not a zonoid for $n\ge3$. The proof is again out of the scope of this paper and
may appear elsewhere.

\subsection{Illumination of complex  zonoids}
\label{sec:zonoid_illumination}
Our next goal is to prove  Theorem \ref{thm:zonoid-illumination}, the resolution of Conjectures \ref{conj:complex_ill_conj} and \ref{conj:frac_complex_ill_conj} for complex zonoids. In the real case, Hadwiger's conjecture for zonoids was resolved by Boltjanski and Soltan in \cite{boltjanski92}, by reducing the problem to zonotopes.  
One can check that their reduction argument also works in the complex case, up to some minor adjustments. However, the proof of
\cite{boltjanski92} relies on Definition \ref{itm:abs-cont-zonoid} of Theorem \ref{the:mainzonoid} which is not 
well-known and hardly used in the literature. We shall give an alternative proof which is based on a similar  but simpler idea which relies on the well-known Definition  \ref{itm:measure-zonoid}. We will present the proof for complex zonoids, 
although it is applicable in the real case as well.

Before we present the proof, we need the technical fact that $\Ill$ and $\Ill^\ast$ are both upper semi-continuous:
\begin{proposition} \label{prop:usc}
    Fix a convex body $K \subseteq \RR^n$. Then: 
    \begin{enumerate}
    \item \label{enu:usc-classical} There exists $\delta > 0$ such that if $d_H(K,L) < \delta$ then $\ill(L) \le \ill(K)$. 
    \item \label{enu:usc-frac} For every $\epsilon > 0 $ there exists $\delta > 0$ such that if $d_H(K,L) < \delta$ then 
    $\ill^\ast (L) \le \ill^\ast(K) + \epsilon$.  
    \end{enumerate}
\end{proposition}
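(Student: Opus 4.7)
Both parts share a common strategy: quantify the openness of the illumination condition so it transfers from $K$ to any convex body $L$ that is Hausdorff-close to $K$. The key quantitative observation is that if $v$ illuminates $x \in \partial K$, then there exist $t, r > 0$ with $B(x + tv, r) \subseteq K$; a short convexity/separation argument then shows that for any convex body $L$ with $d_H(K,L) < \delta < r$ one has $B(x + tv, r - \delta) \subseteq L$, so this margin survives small Hausdorff perturbations. Combined with $\partial L \subseteq \partial K + \delta B$ (a standard consequence of Hausdorff closeness for convex bodies), this is the mechanism that will transfer illumination from $K$ to $L$.

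For part \ref{enu:usc-classical}, fix an optimal illuminating set $V = \{v_1, \ldots, v_m\}$ for $K$. For each $x \in \partial K$ select $v(x) \in V$ with $t_x, r_x > 0$ such that $B(x + t_x v(x), r_x) \subseteq K$, and by continuity of $y \mapsto y + t_x v(x)$ extract an open neighborhood $W_x \subseteq \RR^n$ of $x$ on which $B(y + t_x v(x), r_x/2) \subseteq K$. Extract a finite subcover $W_{x_1}, \ldots, W_{x_k}$ of the compact set $\partial K$, set $r^\ast = \tfrac{1}{2}\min_j r_{x_j}$, and use compactness to find $\eta > 0$ with $\partial K + \eta B \subseteq \bigcup_j W_{x_j}$. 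Then for $\delta := \min(\eta, r^\ast/2)$ and any convex body $L$ with $d_H(K, L) < \delta$, every $y \in \partial L$ lies in some $W_{x_j}$, so by the quantitative observation above $v(x_j)$ illuminates $y$ in $L$. Hence $\ill(L) \le |V| = \ill(K)$.

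For part \ref{enu:usc-frac}, fix $\epsilon > 0$ and a near-optimal illuminating measure $\mu$ of $K$ with $M := \mu(\RR^n) \le \ill^\ast(K) + \epsilon/2$. Choose $\alpha \in (0,1)$ small enough that $M/(1-\alpha) \le M + \epsilon/2$. By inner regularity, for each $x \in \partial K$ pick a compact set $C_x \subseteq A_K(x)$ with $\mu(C_x) \ge 1 - \alpha$. For each $v \in C_x$, the illumination of $x$ by $v$ comes with some margin $(t_v, r_v)$; by compactness of $C_x$ and continuity in $v$, these can be patched (via a finite subcover of $C_x$) into common constants $t_x, r_x > 0$ together with an open set $V_x \supseteq C_x$ in $\RR^n$ and an open neighborhood $W_x$ of $x$ such that $B(y + t_x v, r_x/4) \subseteq K$ for every $(y, v) \in W_x \times V_x$. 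Extract a finite subcover $W_{x_1}, \ldots, W_{x_k}$ of $\partial K$ and, exactly as in part \ref{enu:usc-classical}, choose $\delta$ small enough so that for $L$ with $d_H(K,L)<\delta$ and every $y \in \partial L$, $y$ lies in some $W_{x_j}$ and every $v \in V_{x_j}$ illuminates $y$ in $L$. Then $\mu(A_L(y)) \ge \mu(V_{x_j}) \ge \mu(C_{x_j}) \ge 1 - \alpha$, so $\tfrac{1}{1-\alpha}\mu$ illuminates $L$ with total mass at most $M + \epsilon/2 \le \ill^\ast(K) + \epsilon$.

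The main obstacle is the uniform-margin step in part \ref{enu:usc-frac}: the compact set $C_x$ of ``good'' directions depends on $x$, and one must bundle the pointwise illumination margins across both $v \in C_x$ and $x \in \partial K$ into a single finite collection of neighborhood pairs $(W_{x_j}, V_{x_j})$ that still captures at least $1 - \alpha$ of the $\mu$-mass at every $y \in \partial L$. The interplay between inner regularity of $\mu$ and the compactness of $\partial K$ is what allows only a factor of $\tfrac{1}{1-\alpha}$ to be lost upon scaling, and thus yields an error of at most $\epsilon$ in the total mass.
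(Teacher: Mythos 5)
Your proof is correct and follows essentially the same route as the paper's: both arguments rest on inner regularity of $\mu$ (replacing $A_K(x)$ by a compact direction set of mass at least $1-\alpha$), compactness of $\partial K$ to extract finitely many uniform illumination margins, and the observation that any point whose $\delta$-ball lies in $K$ must lie in $\Int(L)$ once $d_H(K,L)<\delta$, so that only a factor $\frac{1}{1-\alpha}$ is lost; the paper merely packages the uniformity through the open sets $U_{C,t}=\{x:\ x+tC\subseteq\Int(K)\}$ and gets a single time parameter by writing $A_K(x)=\bigcup_{\ell}A_{K,1/\ell}(x)$ before applying inner regularity, whereas you fatten pointwise in both $x$ and $v$, and you also prove part (1) directly (the paper cites it from the literature). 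The one step to make explicit is your ``patching into common constants $t_x,r_x$'' over $C_x$: either keep the finitely many time constants produced by the subcover of $C_x$ (a single $t$ is never actually needed for illumination) or invoke the convexity fact that $x\in K$ and $x+tv\in\Int(K)$ imply $x+sv\in\Int(K)$ for all $0<s\le t$, which lets you pass to $t_x=\min_i t_{v_i}$ with a correspondingly scaled radius.
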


Part \ref{enu:usc-classical} of the proposition is well-known -- see Theorem 34.9 of \cite{Boltyanski1997}.  
However, as far as we know part \ref{enu:usc-frac} of the theorem is new, and fairly delicate, so we now present its
proof:
\begin{proof}
It is enough to show that for every $\epsilon>0$ and every measure
$\mu$ on $\Sph^{n-1}$ which illuminates $K$, there exists $\delta>0$
such that if $d_H(K,L)<\delta$ then the measure $\frac{1}{1-\epsilon}\mu$
illuminates $L$. 

For every compact set $C\subseteq\Sph^{n-1}$ and every $t>0$ we define
\[
U_{C,t}=\left\{ x\in\RR^{n}:\ x+tC\subseteq\Int(K)\right\} 
\]
 which is an open set in $\RR^{n}$. We claim that 
\[
\partial K\subseteq\bigcup\left\{ U_{C,t}\ :\ \begin{array}{l}
C\subseteq\Sph^{n-1}\text{ is compact with}\\
\mu(C)\ge1-\epsilon\text{ and }t>0
\end{array}\right\} .
\]
Indeed, for every $x\in\partial K$ we know that $A_{K}(x)=\left\{ v\in\Sph^{n-1}\ :\ v\text{ illuminates }x\right\} $
satisfies $\mu\left(A_{K}(x)\right)\ge1$. Since by definition 
\[
A_{K}(x)=\bigcup_{\ell=1}^{\infty}A_{K,\frac{1}{\ell}}(x):=\bigcup_{\ell=1}^{\infty}\left\{ v\in\Sph^{n-1}\ :\ x+\frac{1}{\ell}v\in\Int(K)\right\} ,
\]
 there exists an $\ell\in\NN$ such that $\mu\left(A_{K,\frac{1}{\ell}}(x)\right)\ge1-\frac{\epsilon}{2}$.
Since $\mu$ is a finite Borel measure on a compact metric space it
is inner regular, so there exists a compact set $C\subseteq A_{K,\frac{1}{\ell}}(x)$
such that $\mu(C)\ge1-\epsilon$. Then 
\[
x+\frac{1}{\ell}C\subseteq x+\frac{1}{\ell}A_{K,\frac{1}{\ell}}(x)\subseteq\Int(K),
\]
 and so $x\in U_{C,\frac{1}{\ell}}$ which proves the claim.

Since $\partial K$ is compact we can choose finitely many sets $U_{C_{1},t_{1}},U_{C_{2},t_{2}},\ldots,U_{C_{N},t_{N}}$
which cover $\partial K$. Writing $t=\min\left(t_{1},t_{2},\ldots,t_{N}\right)$,
it follows that $U_{C_{1},t},U_{C_{2},t},\ldots,U_{C_{N},t}$ also
cover $\partial K$. Choose compact subsets $A_{i}\subseteq U_{C_{i},t}$
such that we still have $\partial K\subseteq\bigcup_{i=1}^{N}A_{i}$.
We have $A_{i}+tC_{i}\subseteq U_{C_{i},t}+tC_{i}\subseteq\Int(K)$,
and since $A_{i}+tC_{i}$ is compact it follows that 
\[
\delta:=\frac{1}{4}d\left(\partial K,\bigcup_{i=1}^{N}\left(A_{i}+tC_{i}\right)\right)>0.
\]

Now we are finally ready to fix a convex body $L$ such that $d_{H}(K,L)<\delta$, and prove that $L$ is 
illuminated by $\frac{1}{1-\epsilon}\mu$. 

Indeed, fix $y\in\partial T$ and choose $x\in\partial K$ with $\left|x-y\right|<\delta$.
We know that $x\in A_{i}$ for some $1\le i\le N$. It follows that
\[
y+tC_{i}+\delta B_{2}^{n}\subseteq\left(x+\delta B_{2}^{n}\right)+tC_{i}+\delta B_{2}^{n}\subseteq A_{i}+tC_{i}+2\delta B_{2}^{n}\subseteq\Int(K),
\]
where we used the fact that $2\delta<d\left(A_{i}+tC_{i},\partial K\right)$.
However, since $d_{H}\left(K,T\right)<\delta$ every $x$ that satisfies
$x+\delta B_{2}^{n}\subseteq\Int(K)$ also satisfies $x\in\Int(T)$.
Hence $y+tC_{i}\subseteq\Int(T)$, so in particular the set of directions
$C_{i}$ illuminates $y$. Since 
\[
\left(\frac{1}{1-\epsilon}\mu\right)(C_{i})\ge\frac{1-\epsilon}{1-\epsilon}=1,
\]
it follows that $\frac{1}{1-\epsilon}\mu$ indeed illuminates $T$,
finishing the proof. 
\end{proof}

We are finally ready to prove Theorem \ref{thm:zonoid-illumination}: 
\begin{proof}[Proof of Theorem \ref{thm:zonoid-illumination}]
Assume $Z$ is a zonoid which is not a linear image of the polydisc. We will construct a zonotope $K$ which is also
not a linear image of the polydisc with the following property: For every $\delta > 0$ there exists a decomposition  $Z = Z_1 + Z_2$
such that $d_H(\frac{1}{m} Z_1, K) < \delta$ for some $m > 0$. 

We claim that finding such a zonotope $K$ completes our proof. To see this,  fix $\epsilon > 0$. By Proposition 
\ref{prop:usc} there exists $\delta > 0$ such that if $d_H(K,L) < \delta$ then 
$\Ill^\ast(L) \le \Ill^\ast(K) + \epsilon$. Choosing the decomposition $Z = Z_1 + Z_2$ that corresponds to this $\delta$ and 
using Fact \ref{sum_convex} we see that 
\[
\ill^\ast (Z) \le \ill^\ast (Z_1) = \ill^\ast \left(\frac{1}{m} Z_1\right) \le \ill^\ast(K) + \epsilon. 
\]
As this holds for all $\epsilon >0$, Theorem \ref{zonotopefrac} implies that 
$\ill^\ast (Z) \le \ill^\ast(K) < \ill^\ast(D)$. Using the same argument verbatim for classical illumination, we 
also obtain $\ill (Z) \le \ill(K) < \ill(D)$.

To find $K$ we use Proposition \ref{pro:zonoidmeasure} and write 
$$h_Z(\theta)= \int_{S}\left|{\langle x,\theta \rangle} \right| d\mu(x)$$ 
for a Borel measure $\mu$ on $S$. Since $Z$ is full-dimensional, the support of $\mu$ must contain a basis 
$\{a_1,a_2,\ldots,a_n\}$ of $\CC^n$. Since $Z$ is not a linear image of $D^n$, there exists $a_{n+1} \in \supp \mu$
which is not of the form $e^{ti} a_j$ for $t \in \RR$ and $1\le j \le n$. Therefore the zonotope 
$K = K_\CC (a_1 ,a_2, \ldots, a_{n+1})$ is full-dimensional and is not a linear image of the polydisc. 

Given $\delta > 0$, choose disjoint open neighborhoods $\{U_j\}_{j=1}^{n+1} \subseteq S$ of $\{a_j\}_{j=1}^{n+1}$ such that
all $U_j$'s have diameter smaller than $\frac{\delta}{n+1}$. Define $m = \min_j \mu(U_j)$, and consider the measures 
$\mu^{(j)} = \frac{m}{\mu(U_j)} \mu|_{U_j}$ and $\mu_1 = \sum_{j=1}^{n+1} \mu^{(j)}$. Since $\mu_1 \le \mu$, the zonoid
$Z_1$ with support function 
\[ h_{Z_1}(\theta) = \int_{S}\left|{\langle x,\theta \rangle} \right| d\mu_1(x)\]
is indeed a summand of $Z$. Moreover, for every $\theta \in S$ we have
\begin{align*}
\left| h_{\frac{1}{m} Z_1}(\theta) - h_K(\theta) \right| &= 
    \left| \frac{1}{m} \int_{S}\left|{\langle x,\theta \rangle} \right| d\mu_1(x) - 
    \sum_{j=1}^{n+1} \left| \langle a_j, \theta \rangle \right| \right| \\
&=  \left|  \sum_{j=1}^{n+1} \frac{1}{\mu(U_j)} \int_{U_j}\left|{\langle x,\theta \rangle} \right| d\mu(x) - 
    \sum_{j=1}^{n+1} \frac{1}{\mu(U_j)} \int_{U_j}\left| \langle a_j, \theta \rangle \right| d\mu(x) \right| \\
&\le \sum_{j=1}^{n+1}  \frac{1}{\mu(U_j)} \int_{U_j} \Bigl| \left|{\langle x,\theta \rangle} \right|
    -  \left|{\langle a_j,\theta \rangle} \right| \Bigr| d\mu(x).
\end{align*}
Since $x, a_j \in U_j$ and $U_j$  has diameter smaller than $\frac{\delta}{n+1}$ it follows that 
\[ 
\Bigl| \left|{\langle x,\theta \rangle} \right|  -  \left|{\langle a_j,\theta \rangle} \right| \Bigr|
\le \left| \langle x - a_j, \theta \rangle \right| \le |x-a_j| |\theta| < \frac{\delta}{n+1},
\]
and so 
\[ 
\left| h_{\frac{1}{m} Z_1}(\theta) - h_K(\theta) \right| < \sum_{j=1}^{n+1} \frac{\delta}{n+1} = \delta.
\]
It follows that $d_H(\frac{1}{m}Z_1, K) < \delta$ as claimed, and the proof is complete. 
\end{proof}

\section{A variant of the illumination problem}  \label{othervalues}

\subsection{Illuminating the polydisc with finite light sources}

As mentioned in the introduction, the illumination problem has an equivalent formulation in terms of "light sources".
We say that a light source $v \in \RR^n \setminus K$ illuminates $x \in \partial K$ if $x + t(x-v) \in \Int(K)$ for some
$t > 0$. We can then ask about the minimal number of light sources required to illuminate $\partial K$. 
The directions of illumination in the classical illumination problem can be interpreted as light sources "at infinity",
and it's therefore not hard to see that $\ill(K)$ is equal to the minimal number of light sources needed to 
illuminated $K$. 

We can now formulate a variant of the problem, where the light sources are not allowed to escape to infinity. Instead
we fix $r > 1$, and define $\ill_r(K)$ to be the minimal number of light sources $v_1, v_2, \ldots, v_N \in rK$ which illuminate $K$. Similarly, we define
\[
\ill_{r}^{\ast}(K)=\inf\left\{ \mu(rK):\ \begin{array}{l}
\mu\text{ is a Borel measure on }rK\text{ and for all }x\in\partial K\\
\text{we have }\mu\left(\left\{ v:\ x\text{ is illuminated by the light source }v\right\} \right)\ge1
\end{array}\right\}. 
\]
We will now study this more restricted problem in the case where $K = D^n$. 

A variant of Lemma  \ref{lem:extreme} can be established for illumination by light sources, so it is again enough
to illuminate the torus $D_0^n = \extreme(D^n)$. It also remains true that a light source $v=(v_1,v_2,\ldots,v_n)$ 
illuminates $x = (x_1, x_2,\ldots, x_n) \in D_0^n$ if and only if every $v_i$ illuminates $x_i \in \partial D$. 
Clearly every $v_i$ illuminates a larger arc of $\partial D$ as $|v_i|$ increases, so we may assume that $|v_i|=r$
for all $1\le i \le n$, i.e. that all our light sources belong to $r D_0^n$.

Let $v = (r e^{2\pi  \alpha_1 i}, r e^{2\pi  \alpha_2 i}, \ldots, r e^{2\pi  \alpha_n i}) \in rD_0^n$ be such a 
light source. It is a simple exercise in trigonometry that $v_j = r e^{2\pi  \alpha_j i}$ illuminates a point 
$x_j = e^{2\pi  \beta_j i} \in \partial D$ if and only if $d(\alpha_j, \beta_j) < \frac{1}{2\pi}\arccos\frac{1}{r}$,
where $d(\alpha_j, \beta_j)$ denotes the usual (symmetric) distance on $\TT$ -- see Figure \ref{fig:illuminatingball}.
\begin{figure}[ht!]
    \centering
    \includegraphics[width=0.5\linewidth]{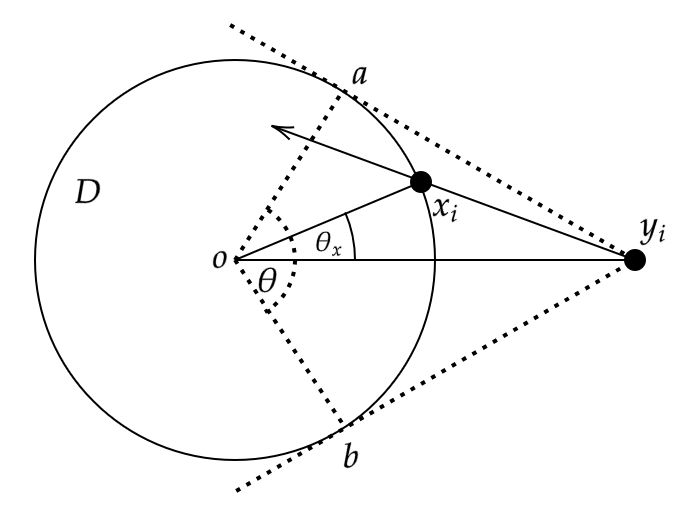}
    \caption{$y_i$ will illuminate $x_i$ if $x_i$ is in the arc $ab$, with $a,b$ being the points of tangency corresponding to $y_i$. We have $|\arg y_i-\arg x_i|=\theta_x<\theta$,which can be calculated using $|oy_i|=r$.}
    \label{fig:illuminatingball}
\end{figure}
Therefore the set $Y_v \subset D_0^n$ of points illuminated by the light source $v$ is precisely 
\begin{equation*}
\label{eq:open_cube_extra}
Y_v=\left\{(e^{2\pi\beta_1 i},\cdots,e^{2\pi\beta_n i})\in D_0^n\ :\  d(\alpha_j,\beta_j)<\frac{1}{2\pi}\arccos{\frac{1}{r}} \text{ for all } 1\le j \le n \right\}.
\end{equation*}
Under our usual identification of $D_0^n$ with $\TT^n$, the set $Y_v$ is a cube of side length $\epsilon_r = \frac{1}{\pi} \arccos \frac{1}{r}$. 
We therefore obtain an extension of Proposition \ref{prop:torus_polydisc_ill}:
\begin{proposition} \label{prop:cover-extended}
We have $\ill_r(D^n)=N(\TT^n,(0,\epsilon_r)^n)$ and $\ill_r^*(D^n)=N^*(\TT^n,(0,\epsilon_r)^n)$ for all $r>1$, 
where $\epsilon_r = \frac{1}{\pi} \arccos \frac{1}{r}$.
\end{proposition}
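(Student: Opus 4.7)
The plan is to follow the template established by Proposition \ref{prop:torus_polydisc_ill}, simply replacing illuminating directions at infinity by light sources in $rD^n$, and using the trigonometric computation already carried out in the paragraph preceding the statement.

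First I would verify that the analog of Lemma \ref{lem:extreme} holds for light sources: if $V \subseteq rK \setminus K$ illuminates $\extreme(K)$, then $V$ illuminates all of $\partial K$. The proof of Lemma \ref{lem:extreme} carries through verbatim, since a light source $v$ illuminates $x\in\partial K$ iff $\langle x-v,\theta\rangle > 0$ for every outer normal $\theta\in N_x(K)$, and the inclusion $N_x(K)\subseteq N_y(K)$ when $y$ lies on a supporting hyperplane through $x$ still gives the reduction. This reduces the problem to illuminating $D_0^n$.

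Next, exactly as in the proof of Proposition \ref{prop:torus_polydisc_ill}, I would exploit the product structure: a light source $v=(v_1,\dots,v_n)$ illuminates $x=(x_1,\dots,x_n)\in D_0^n$ iff each $v_j$ illuminates $x_j\in \partial D^1$. For each $j$, the set of points in $\partial D^1$ illuminated by $v_j$ grows with $|v_j|$, so without loss of generality we may push every coordinate out to $|v_j|=r$ — that is, assume $v\in rD_0^n$. This gives
\[
\ill_r(D^n)=\min\Big\{N\,:\,\exists v^1,\dots,v^N\in rD_0^n\text{ with }\bigcup_i Y_{v^i}=D_0^n\Big\},
\]
and the analogous formula for $\ill_r^*(D^n)$ via pushforward of any illuminating measure along $v\mapsto r\,v/\|v\|_\infty$-type renormalization on each coordinate.

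Now I invoke the trigonometric calculation already given in the text: if $v=(re^{2\pi\alpha_1 i},\dots,re^{2\pi\alpha_n i})$, then $v_j$ illuminates $e^{2\pi\beta_j i}$ iff $d(\alpha_j,\beta_j)<\tfrac{1}{2\pi}\arccos\tfrac{1}{r}$, where $d$ is the symmetric distance on $\TT^1$. Under the identification $D_0^n\cong \TT^n$ via $(e^{2\pi\theta_1 i},\dots,e^{2\pi\theta_n i})\mapsto(\theta_1,\dots,\theta_n)$, the set $Y_v$ becomes an open cube of side length $\epsilon_r=\tfrac{1}{\pi}\arccos\tfrac{1}{r}$, appropriately recentered by $\pm\tfrac{\epsilon_r}{2}$ in each coordinate. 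The classical statement $\ill_r(D^n)=N(\TT^n,(0,\epsilon_r)^n)$ and the fractional one $\ill_r^*(D^n)=N^*(\TT^n,(0,\epsilon_r)^n)$ then follow exactly as in the proof of Proposition \ref{prop:torus_polydisc_ill}.

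The only genuinely new point, and the place where I would be most careful, is justifying that for the fractional version we may restrict illuminating measures to $rD_0^n$ (rather than merely to $rD^n$). This is handled by a two-step reduction: given a measure $\mu$ on $rD^n$ that illuminates $D^n$, first discard the null set where some coordinate vanishes (these contribute nothing to illumination of $D_0^n$), then push $\mu$ forward by the coordinatewise normalization map $v\mapsto (r v_j/|v_j|)_{j=1}^n$ onto $rD_0^n$; this map preserves the set of extremal points illuminated by each $v$ (by the product argument above) and does not increase total mass, so the infimum in the definition of $\ill_r^*$ is attained on measures supported in $rD_0^n$. Once this is in place, everything else is a transcription of the earlier proof.
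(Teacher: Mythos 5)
Your proposal is correct and follows essentially the same route as the paper: reduce to the extremal torus $D_0^n$ via a light-source variant of Lemma \ref{lem:extreme}, use the product structure to normalize light sources onto $rD_0^n$ (for measures, by coordinatewise pushforward after discarding sources that illuminate no point of $D_0^n$), and apply the trigonometric identification of each illuminated set with an open cube of side $\epsilon_r$ in $\TT^n$, exactly as in Proposition \ref{prop:torus_polydisc_ill}. One small correction: with the paper's convention for $N_x(K)$, a light source $v$ illuminates $x$ iff $\langle x-v,\theta\rangle<0$ (not $>0$) for all $\theta\in N_x(K)$, and since this criterion depends on $x$ and not only on its normal cone, the carry-over of Lemma \ref{lem:extreme} uses the additional (easy) observation that $\langle x-y,\theta\rangle\le 0$ for $\theta\in N_y(K)$ and $x\in K$.
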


Note that $\epsilon_r \to \frac{1}{2}$ as $r\to \infty$, recovering Proposition \ref{prop:torus_polydisc_ill}.
Also, since $\epsilon_2 = \frac{1}{3}$, we can use Propositions \ref{prop:lowerboundm} and \ref{prop:upperboundm} 
to precisely compute e.g. $\ill_2(D^n) = \frac{3^{n+1}-1}{2}$. 

\subsection{Covering the torus with cubes of arbitrary side length }
From Propositions \ref{prop:cover-extended} and \ref{prop:fractionalcover} it follows immediately that 
\[ 
\ill_r^\ast(D^n) = N^\ast\left(\TT^n, (0,\epsilon_r)^n \right) = 
\left( \frac{\pi}{\arccos\frac{1}{r}} \right)^n 
\]
for all $r>1$. In contrast, in order to compute $\ill_r(D^n)$ for a general $r>1$ one needs to know the covering
numbers $N(\TT^n, (0,\epsilon)^n)$ for a general $\epsilon > 0$. This is an open problem of independent interest,
which we will not completely solve here. 
However, as already mentioned, some results about covering with \emph{closed} cubes appeared in \cite{Bogdanov22} and
\cite{MCELIECE1973119}, and we now briefly study what these results imply for covering with open cubes. 

To relate covering with open cube to covering with closed cubes we use the following lemma:
\begin{lemma} \label{closecovertorus}
For every $0<\epsilon<1$ we have 
\[ 
N\left(\TT^n, (0,\epsilon)^n\right) = \lim_{\eta \to \epsilon^-} 
N\left(\TT^n, [0,\eta]^n\right). \]
More explicitly, there exists $\delta > 0$ such that if $\epsilon - \delta < \eta < \epsilon$ then 
$N\left(\TT^n, (0,\epsilon)^n\right) = N\left(\TT^n, [0,\eta]^n\right) $.
\end{lemma}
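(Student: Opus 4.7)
The plan is to establish the two matching inequalities
\[
N(\TT^n,(0,\epsilon)^n)\le N(\TT^n,[0,\eta]^n)\quad\text{and}\quad N(\TT^n,[0,\eta]^n)\le N(\TT^n,(0,\epsilon)^n)
\]
for $\eta$ strictly less than $\epsilon$ but sufficiently close, which together yield both the ``explicit'' statement (the existence of $\delta>0$) and the limit statement. The first inequality is elementary and valid for \emph{every} $\eta<\epsilon$; the second is the real content and will follow from compactness.

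For the easy direction, I would slightly enlarge each closed cube of a given covering into an open cube. If $\TT^n=\bigcup_{i=1}^M(x_i+[0,\eta]^n)$, set $\delta'=\tfrac{\epsilon-\eta}{2}>0$ and note that $[0,\eta]\subseteq(-\delta',\eta+\delta')=-\delta'+(0,\epsilon)$. Therefore
\[
\TT^n=\bigcup_{i=1}^M\bigl(x_i+[0,\eta]^n\bigr)\subseteq\bigcup_{i=1}^M\bigl((x_i-\delta'\mathbf{1})+(0,\epsilon)^n\bigr),
\]
where $\mathbf{1}=(1,\dots,1)$, which gives $N(\TT^n,(0,\epsilon)^n)\le M$.

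For the reverse direction I would shrink every open cube in a minimal covering by an amount given by compactness. Fix a covering $\TT^n=\bigcup_{i=1}^N(x_i+(0,\epsilon)^n)$ realizing $N=N(\TT^n,(0,\epsilon)^n)$ and, for each $\tau\in(0,\epsilon/2)$, consider the open set
\[
U_\tau=\bigcup_{i=1}^N\bigl(x_i+(\tau,\epsilon-\tau)^n\bigr)\subseteq\TT^n.
\]
The family $\{U_\tau\}_{\tau>0}$ is nested, increasing as $\tau\downarrow 0$, and its union equals $\bigcup_i(x_i+(0,\epsilon)^n)=\TT^n$: indeed any $p=x_i+y$ with $y\in(0,\epsilon)^n$ satisfies $y\in(\tau,\epsilon-\tau)^n$ for all $\tau<\min_j\min(y_j,\epsilon-y_j)$, a positive quantity. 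Since $\TT^n$ is compact, the open cover $\{U_\tau\}_{\tau>0}$ admits a finite subcover, and by nesting there is a single $\tau_0>0$ with $U_{\tau_0}=\TT^n$. Consequently the translates $x_i+[\tau_0,\epsilon-\tau_0]^n$ also cover $\TT^n$, and since $[\tau_0,\epsilon-\tau_0]^n$ is a translate of $[0,\epsilon-2\tau_0]^n$ we obtain $N(\TT^n,[0,\epsilon-2\tau_0]^n)\le N$, and hence $N(\TT^n,[0,\eta]^n)\le N$ for every $\eta\ge \epsilon-2\tau_0$. Setting $\delta=2\tau_0$ yields the explicit form, from which the limit statement follows since $N(\TT^n,[0,\eta]^n)$ is monotone in $\eta$.

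The only real subtlety is the compactness step: one must verify that each $U_\tau$ is genuinely open in the quotient topology of $\TT^n$ (which is automatic because the quotient map $\RR^n\to\TT^n$ is open) and that $\{U_\tau\}$ exhausts $\TT^n$, as shown by the positivity of the coordinate slack of any interior point. I do not expect any further obstacle.
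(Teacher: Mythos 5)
Your proof is correct and follows essentially the same route as the paper's: both directions are obtained by the trivial ``enlarge closed cubes into open ones'' inequality together with a compactness argument that shrinks the open cubes of a minimal cover of $\TT^n$ to slightly smaller (then closed) cubes, your nested exhaustion $U_\tau$ being just a symmetric-shrinking variant of the paper's subcover extraction from the family $C_i(\eta)$.
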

\begin{proof}
Write $N = \left(\TT^n, (0,\epsilon)^n\right)$ and choose open cubes $C_1,C_2,\ldots,C_N$ of side-length $\epsilon$ that cover $\TT^n$.  Writing $C_i = x_i + (0,\epsilon)^n$ and  $C_i(\eta) = x_i + (0,\eta)^n$ for all $0 < \eta < \epsilon$, we  have 
\[ 
\bigcup_{i=1}^{N} \bigcup_{\eta < \epsilon} C_i(\eta) = \bigcup_{i=1}^N C_i  = \TT^n.
\]
Therefore, by compactness, there exists a finite sub-cover $C_{i_1}(\eta_1), C_{i_2}(\eta_2), \ldots, C_{i_k}(\eta_k)$ of $\TT^n$. Since 
$C_i(\eta) \subseteq C_i(\eta')$ for $\eta < \eta'$ we may assume that every index $1\le i\le N$ appears only once in our sub-cover, i.e., our sub-cover has the form $C_1(\eta_1), C_2(\eta_2), \ldots, C_N(\eta_N)$. Setting $\eta_0 = \max\{\eta_1,\eta_2,\ldots,\eta_n\} < \epsilon$, we see that $N\left(\TT^n, (0,\eta_0)^n\right) \le N$. Since the opposite inequality is trivial, it follows that for all $\eta_0 < \eta < \epsilon$ we have $N\left(\TT^n, [0,\eta]^n\right) = N$ as claimed. 
\end{proof}
In \cite[Theorem 2]{MCELIECE1973119}, the following lower bound was given for $N\left(\TT^n, [0,\epsilon]^n\right)$: 
Define a sequence $\{b_n(\epsilon)\}_{n=1}^\infty$ inductively by $b_0(\epsilon) = 1$ and 
$b_{n+1}(\epsilon) = \left\lceil\frac{b_n(\epsilon)}{\epsilon}\right\rceil $. Then 
$N\left(\TT^n, [0,\epsilon]^n\right) \ge b_n(\epsilon)$ for all $\epsilon > 0$. Using this bound we prove:
\begin{proposition} \label{lowerboundG}
    Define a sequence $\{a_n(\epsilon)\}_{n=1}^\infty$ inductively by setting $a_0(\epsilon) = 1$ and 
$a_{n+1}(\epsilon) = \left\lfloor\frac{a_n(\epsilon)}{\epsilon}\right\rfloor + 1$. Then 
$ N\left(\TT^n, (0,\epsilon)^n\right) \ge a_n(\epsilon)$ for all $\epsilon > 0$. 
\end{proposition}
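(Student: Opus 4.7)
The plan is to deduce the open-cube lower bound from the McEliece bound for closed cubes, invoking the transition Lemma \ref{closecovertorus}. Concretely, I would fix $\epsilon > 0$ and, by Lemma \ref{closecovertorus}, choose $\delta > 0$ so that $N(\TT^n,(0,\epsilon)^n) = N(\TT^n,[0,\eta]^n)$ for every $\eta \in (\epsilon - \delta, \epsilon)$. For any such $\eta$ the McEliece bound from \cite{MCELIECE1973119} yields $N(\TT^n,[0,\eta]^n) \ge b_n(\eta)$, so the problem reduces to comparing the two recursively defined sequences: it suffices to produce a single $\eta < \epsilon$ (or equivalently, all $\eta$ sufficiently close to $\epsilon$ from below) for which $b_n(\eta) \ge a_n(\epsilon)$.

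The comparison between $b_n$ and $a_n$ would be the heart of the argument, carried out by induction on $n$. The base case $n = 0$ is immediate since $b_0(\eta) = 1 = a_0(\epsilon)$ for every $\eta$. For the inductive step, assume there is $\delta_n > 0$ such that $b_n(\eta) \ge a_n(\epsilon)$ for all $\eta \in (\epsilon - \delta_n, \epsilon)$. Then for such $\eta$,
\[
b_{n+1}(\eta) = \left\lceil \frac{b_n(\eta)}{\eta} \right\rceil \ge \left\lceil \frac{a_n(\epsilon)}{\eta} \right\rceil,
\]
and the goal becomes to show that $\lceil a_n(\epsilon)/\eta\rceil \ge \lfloor a_n(\epsilon)/\epsilon\rfloor + 1 = a_{n+1}(\epsilon)$ once $\eta$ is close enough to $\epsilon$.

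This last inequality is the delicate point, since ceilings and floors behave differently depending on whether $a_n(\epsilon)/\epsilon$ is an integer. If $a_n(\epsilon)/\epsilon$ happens to equal an integer $k$, then for every $\eta < \epsilon$ we have $a_n(\epsilon)/\eta > k$ and therefore $\lceil a_n(\epsilon)/\eta\rceil \ge k+1 = a_{n+1}(\epsilon)$, so the bound holds on all of $(0,\epsilon)$. If instead $a_n(\epsilon)/\epsilon$ is strictly between consecutive integers $k$ and $k+1$, then already $a_n(\epsilon)/\epsilon > k$, and by continuity the same strict inequality $a_n(\epsilon)/\eta > k$ persists for all $\eta$ in a one-sided neighborhood of $\epsilon$; choose $\delta_{n+1} \le \min(\delta_n,\, \epsilon - a_n(\epsilon)/(k+1))$ to make this precise. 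Either way one obtains a suitable $\delta_{n+1} > 0$, completing the induction.

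Having established the sequence of comparisons $b_n(\eta) \ge a_n(\epsilon)$ for $\eta$ close to $\epsilon$ from below, the conclusion follows by chaining: pick any $\eta \in (\epsilon - \min(\delta,\delta_n), \epsilon)$ and write
\[
N(\TT^n,(0,\epsilon)^n) \;=\; N(\TT^n,[0,\eta]^n) \;\ge\; b_n(\eta) \;\ge\; a_n(\epsilon).
\]
The only genuine subtlety I anticipate is the case split on whether $a_n(\epsilon)/\epsilon$ is an integer in the inductive step; everything else is bookkeeping around Lemma \ref{closecovertorus} and the McEliece estimate.
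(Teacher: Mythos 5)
Your proposal is correct and follows essentially the same route as the paper: reduce to closed cubes via Lemma \ref{closecovertorus}, invoke the McEliece bound $N(\TT^n,[0,\eta]^n)\ge b_n(\eta)$, and compare $b_n(\eta)$ with $a_n(\epsilon)$ by induction as $\eta\to\epsilon^-$. The only (immaterial) difference is that the paper proves the exact limit $\lim_{\eta\to\epsilon^-}b_n(\eta)=a_n(\epsilon)$ using $\lim_{y\to x^+}\lceil y\rceil=\lfloor x\rfloor+1$, whereas you prove only the one-sided inequality $b_n(\eta)\ge a_n(\epsilon)$, which indeed suffices (and your case split is not even needed, since $y>x$ already gives $\lceil y\rceil\ge\lfloor x\rfloor+1$).
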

\begin{proof}
We prove by induction on $n$ that $\lim_{\eta \to \epsilon^{-}} b_n(\eta) = a_n(\epsilon)$. 
Indeed, assume this holds for a given $n$. This means that there exists $\delta>0$ such that 
$b_n(\eta) = a_n(\epsilon)$ for all $\epsilon - \delta < \eta < \epsilon$. Therefore 
\[ 
\lim_{\eta \to \epsilon^{-}} b_{n+1}(\eta) = 
\lim_{\eta \to \epsilon^{-}} \left\lceil\frac{b_{n}(\eta)}{\eta}\right\rceil = 
\lim_{\eta \to \epsilon^{-}} \left\lceil\frac{a_{n}(\epsilon)}{\eta}\right\rceil = 
\left\lfloor\frac{a_{n}(\epsilon)}{\epsilon}\right\rfloor + 1 = a_{n+1}(\epsilon),
\]
where we used the fact that $\lim_{y \to x^+} \lceil y \rceil = \lfloor x \rfloor + 1$. This finishes the inductive proof.

Using Lemma \ref{closecovertorus}, the result of \cite{MCELIECE1973119} and the claim above we see that indeed
\[ 
N\left(\TT^n, (0,\epsilon)^n\right) = \lim_{\eta \to \epsilon^-} N\left(\TT^n, [0,\eta]^n\right)
\ge \lim_{\eta \to \epsilon^-} b_n(\eta) = a_n(\epsilon). \qedhere
\]
\end{proof}

Note that for $m \in \NN$ we have $a_n\left(\frac{1}{m}\right) = \frac{m^{n+1}-1}{m-1}$, 
so Proposition \ref{lowerboundG} is an extension of Proposition \ref{prop:lowerboundm}. In this case we proved a 
matching upper bound in Proposition \ref{prop:upperboundm}, but in general the lower bound given by
Proposition \ref{lowerboundG} does not have to be sharp. 

Better results can be given in low dimensions. In two dimensions, a complete answer was given in 
\cite{MCELIECE1973119}: For every $\epsilon >0$ we have 
$N(\TT^2,[0,\epsilon]^2)=\left\lceil\frac{1}{\epsilon}\left\lceil\frac{1}{\epsilon}\right\rceil\right\rceil$.
Using Lemma \ref{closecovertorus} and the identity $\lim_{y \to x^+} \lceil y \rceil = \lfloor x \rfloor + 1$
we conclude that 
\[ 
N(\TT^2, (0,\epsilon)^2) = 
\left\lfloor{ \frac{1}{\epsilon} } \left(\left\lfloor{ \frac{1}{\epsilon} }\right\rfloor + 1\right) \right\rfloor + 1.
\]
In three dimensions, the covering numbers $N(\TT^3,[0,\epsilon]^3)$ were computed in \cite{Bogdanov22} 
for some values of $\epsilon$.  Combining Theorems 1,2 and 5 of \cite{Bogdanov22}  with Lemma \ref{closecovertorus} we deduce that
$$
N(\TT^3,(0,\epsilon)^3)=
        \begin{cases}
			4, & \epsilon\in\left( \frac{3}{4},1\right]\\
            5, & \epsilon\in\left( \frac{2}{3},\frac{3}{4}\right]\\
            7, & \epsilon\in\left( \frac{3}{5},\frac{2}{3}\right]\\
            8, & \epsilon\in\left( \frac{1}{2},\frac{3}{5}\right]\\
            m^3+m^2+m+1, & \epsilon\in\left( \frac{1}{m+\frac{1}{m^2+m+1}},\frac{1}{m}\right],m\in\NN\\
            m^3, & \epsilon\in\left( \frac{1}{m},\frac{1}{m-\frac{1}{m^2-1}}\right],m\in\NN.
		\end{cases}
$$
To the best of our knowledge no such results are known in dimension $n>3$.

\bibliographystyle{amsplain_Abr_NoDash}
\bibliography{citations}
\end{document}